\newtheorem{theorem}{Theorem}[section]
\newtheorem{lemma}[theorem]{Lemma}
\newtheorem{proposition}{Proposition}[section]
\theoremstyle{definition}
\newtheorem{definition}[theorem]{Definition}
\theoremstyle{remark}
\newtheorem{remark}[theorem]{Remark}
\numberwithin{equation}{section}
\begin{document}
\title[BGK model for multi-component gases  near a global Maxwellian]{BGK model for multi-component gases  near a global Maxwellian}

\author{Gi-Chan Bae}
\address{Research institute of Mathematics, Seoul National University, Seoul 08826, Republic of Korea}
\email{gcbae02@snu.ac.kr}

\author{Christian Klingenberg}
\address{Department of mathematics, W\"urzburg University, Emil Fischer Str. 40, 97074 W\"urzburg, GERMANY}
\email{klingen@mathematik.uni-wuerzburg.de}

\author{Marlies Pirner}
\address{Department of mathematics, Vienna University, Oskar-Morgenstern-Platz 1, 1090 Vienna, Austria}
\email{marlies.pirner@mathematik.uni-wuerzburg.de}

\author{Seok-Bae Yun}
\address{Department of mathematics, Sungkyunkwan University, Suwon 16419, Republic of Korea }
\email{sbyun01@skku.edu}



\keywords{Multi-component gases, BGK model for multi-component gas mixtures, Boltzmann equation for multi-component gas mixtures, nonlinear energy method, classical solutions, asymptotic behavior}

\begin{abstract}
In this paper, we establish the existence of the unique global-in-time classical solutions to the multi-component BGK model suggested in \cite{mixmodel} when the initial data is a small perturbation of global equilibrium. 
For this, we carefully analyze the dissipative nature of the linearized multi-component relaxation operator,
and observe that the partial dissipation from the intra-species and the inter-species linearized relaxation operators are combined in a complementary manner to give rise to the desired dissipation estimate of the model
We also observe that  the convergence rate of the distribution function increases as the momentum-energy interchange rate between the different components of the gas increases.
\end{abstract}
\maketitle
\section{Introduction}
In this paper, we study the existence and the asymptotic behavior of the BGK model for multi-component gases suggested in \cite{mixmodel}:
\begin{align}\label{CCBGK}
\begin{aligned}
\partial_t F_1+v \cdot \nabla_xF_1&=n_1(\mathcal{M}_{11}-F_1)+n_2(\mathcal{M}_{12}-F_1), \cr
\partial_t F_2+v \cdot \nabla_xF_2&=n_2(\mathcal{M}_{22}-F_2)+n_1(\mathcal{M}_{21}-F_2), \cr
F_1(x,v,0)=F_{10}&(x,v), \qquad F_2(x,v,0)=F_{20}(x,v).
\end{aligned}
\end{align}
The distribution function $F_i(x,v,t)$ denotes the number density of $i$-th species particle at the phase point $(x,v) \in \mathbb{T}^3 \times \mathbb{R}^3$ at time $t\in\mathbb{R}^+$ for $i=1,2$.  The intra-species Maxwell distributions in the BGK operator $\mathcal{M}_{ii}$ are defined as 
\begin{align*}
\mathcal{M}_{ii} = \frac{n_i}{\sqrt{2\pi\frac{T_i}{m_i}}^3} \exp\left(-\frac{|v-U_i|^2}{2\frac{T_i}{m_i}}\right),\quad (i=1,2).
\end{align*}
Here $m_i$ $(i=1,2)$ denotes the mass of a molecule in the $i$-th component,
which we assume that $m_1 \geq m_2$ throughout the paper without loss of generality.
The number density $n_i$, the bulk velocity $U_i$, and the temperature $T_i$ of the $i$-th particle are defined by
\begin{align*}
n_i(x,t)&=\int_{\mathbb{R}^3}F_i(x,v,t)dv,\cr U_i(x,t)&=\frac{1}{n_i}\int_{\mathbb{R}^3}F_i(x,v,t)vdv, \cr T_i(x,t)&=\frac{1}{3n_i}\int_{\mathbb{R}^3}F_i(x,v,t)m_i|v-U_i|^2dv .
\end{align*}
The inter-species Maxwellian distributions are defined by
\begin{align*}
\mathcal{M}_{12} = \frac{n_1}{\sqrt{2\pi\frac{T_{12}}{m_1}}^3} \exp\left(-\frac{|v-U_{12}|^2}{2\frac{T_{12}}{m_1}}\right), \qquad \mathcal{M}_{21} = \frac{n_2}{\sqrt{2\pi\frac{T_{21}}{m_2}}^3} \exp\left(-\frac{|v-U_{21}|^2}{2\frac{T_{21}}{m_2}}\right),
\end{align*}
where the inter-species bulk velocities $ U_{12}, U_{21}$ and the inter-species temperatures $T_{12}, T_{21}$ are defined by
 \begin{align*}
U_{12}&=\delta U_1 + (1-\delta)U_2, \cr
U_{21}&=\frac{m_1}{m_2}(1-\delta)U_1 + \left(1-\frac{m_1}{m_2}(1-\delta)\right)U_2,
\end{align*}
and
\begin{align*}
T_{12}&= \omega T_1 + (1-\omega)T_2 + \gamma |U_2-U_1|^2, \cr
T_{21}&= (1-\omega) T_1 + \omega T_2 +\left(\frac{1}{3}m_1(1-\delta)\left(\frac{m_1}{m_2}(\delta-1)+1+\delta\right)-\gamma\right) |U_2-U_1|^2.
\end{align*}
Here, the free parameter $\delta$ and $\omega$ denote the momentum interchange rate and the temperature interchange rate, respectively.
In \eqref{CCBGK}, $n_i(\mathcal{M}_{ii}-F_i)$ $(i=1,2)$ are the intra-species relaxation operators for $i$-th gas component, while 
$n_j(\mathcal{M}_{ij}-F_i)$ $(i\neq j)$ are the inter-species relaxation operators between different components of the gas. We note that the inter-species relaxation operators describe the interchange of the macroscopic momentum and the temperature between two different species of gas.
These relaxation operators satisfy the following cancellation properties:
\begin{align*}
\begin{split}
&\int_{\mathbb{R}^3}(\mathcal{M}_{ii}-F_i) \left( 1 ,m_iv,m_i|v|^2\right)dv=0,\quad i=1,2 \cr
& \int_{\mathbb{R}^3}(\mathcal{M}_{12}-F_1) dv=0,\quad \int_{\mathbb{R}^3}(\mathcal{M}_{21}-F_2) dv=0, \cr
& \int_{\mathbb{R}^3}n_1(\mathcal{M}_{12}-F_1)m_1v dv+\int_{\mathbb{R}^3}n_2(\mathcal{M}_{21}-F_2)m_2v dv=0, \cr
& \int_{\mathbb{R}^3}n_1(\mathcal{M}_{12}-F_1)m_1|v|^2 dv+\int_{\mathbb{R}^3}n_2(\mathcal{M}_{21}-F_2)m_2|v|^2 dv=0,
\end{split}
\end{align*}
leading to the following conservation laws of the density, total momentum, and total energy:
\begin{align}\label{conservation}
\begin{split}
&\frac{d}{dt} \int_{\mathbb{T}^3 \times \mathbb{R}^3}F_1(x,v,t) dvdx = \frac{d}{dt} \int_{\mathbb{T}^3 \times \mathbb{R}^3}F_2(x,v,t) dvdx =0, \cr
&\frac{d}{dt} \int_{\mathbb{T}^3 \times \mathbb{R}^3}\left(F_1(x,v,t)m_1v + F_2(x,v,t)m_2v\right) dvdx =0, \cr
&\frac{d}{dt} \int_{\mathbb{T}^3 \times \mathbb{R}^3}\left(F_1(x,v,t)m_1|v|^2 + F_2(x,v,t)m_2|v|^2\right) dvdx =0.
\end{split}
\end{align}
To ensure the positivity of all temperatures, the free parameters $\omega$, $\delta$, and $\gamma$ are restricted to 
\begin{align*}
 \frac{ \frac{m_1}{m_2} - 1}{1+\frac{m_1}{m_2}} \leq  \delta < 1, \qquad     0 \leq \omega < 1,
\end{align*}
and
\begin{align*}
0 \leq \gamma  \leq \frac{m_1}{3}(1-\delta)\left[\left(1+\frac{m_1}{m_2} \right) \delta+1-\frac{m_1}{m_2}  \right].
\end{align*}
For more details, see \cite{mixmodel}. \newline

The main goal of this paper is to establish the global-in-time classical solution of the mixture BGK model when the initial data is close to global equilibrium. For this, we consider the following global equilibrium for each particle distribution function:
\begin{align*}
\mu_1(v)=n_{10}\frac{\sqrt{m_1}^3}{\sqrt{2\pi}^3}e^{-\frac{m_1|v|^2}{2}},\qquad \mu_2(v) = n_{20}\frac{\sqrt{m_2}^3}{\sqrt{2\pi}^3}e^{-\frac{m_2|v|^2}{2}}.
\end{align*}
We then define the perturbations $f_k$ $(k=1,2)$ by $F_k=\mu_k+\sqrt{\mu_k}f_k$ and rewrite the mixture BGK model \eqref{CCBGK} in terms of $f_k$ as
\begin{align}\label{pertf12}
\begin{split}
\partial_t f_1+v\cdot \nabla_xf_1&=L_{11}(f_1)+L_{12}(f_1,f_2)+\Gamma_{11}(f_1)+\Gamma_{12}(f_1,f_2), \cr
\partial_t f_2+v\cdot \nabla_xf_2&=L_{22}(f_2)+L_{21}(f_1,f_2)+\Gamma_{22}(f_2)+\Gamma_{21}(f_1,f_2).
\end{split}
\end{align}
On the R.H.S, $L_{11}$ and $L_{22}$ denote the linearized part of the intra-species relaxation operators:
\[L_{kk}(f_k)=n_{k0}(P_kf_k-f_k), \quad (k=1,2),
\]
where $P_k$ is the $L^2$ projection onto the linear space spanned by
\[\left\{ \sqrt{\mu_k}, v\sqrt{\mu_k}, |v|^2\sqrt{\mu_k}  \right\}.\]
The linearized operators for inter-species interactions $L_{12}$ and $L_{21}$ are given by
\begin{align*}
L_{12}(f_1,f_2) &=n_{20}(P_1f_1-f_1)\\
&\quad + n_{20} \bigg[(1-\delta) \sum_{2\leq i \leq 4}  \left(\sqrt{\frac{n_{10}}{n_{20}}}\sqrt{\frac{m_1}{m_2}}\langle f_2, e_{2i} \rangle_{L^2_v}-\langle f_1, e_{1i} \rangle_{L^2_v}\right)e_{1i} \cr
&\quad+(1-\omega) \left(\sqrt{\frac{n_{10}}{n_{20}}}\langle f_2, e_{25} \rangle_{L^2_v}-\langle f_1, e_{15} \rangle_{L^2_v}\right)e_{15} \bigg],
\end{align*}
\begin{align*}
L_{21}(f_1,f_2)  &=n_{10}(P_2f_2-f_2)\\
&\quad+ n_{10} \bigg[\frac{m_1}{m_2}(1-\delta)\sum_{2\leq i \leq 4}\left(\sqrt{\frac{n_{20}}{n_{10}}}\sqrt{\frac{m_2}{m_1}}\langle f_1, e_{1i} \rangle_{L^2_v}-\langle f_2, e_{2i} \rangle_{L^2_v}\right)e_{2i} \cr
&\quad+ (1-\omega)\left(\sqrt{\frac{n_{20}}{n_{10}}}\langle f_1, e_{15} \rangle_{L^2_v}-\langle f_2, e_{25} \rangle_{L^2_v}\right)e_{25} \bigg],
\end{align*}
for $0 \leq \delta,\omega<1$ and $\{e_{ki}\}_{1\leq i \leq 5}$ is an orthonormal basis spanned by $\left\{ \sqrt{\mu_k}, v\sqrt{\mu_k}, |v|^2\sqrt{\mu_k}  \right\}$ for $k=1,2$.
Finally, $\Gamma_{11}$, $\Gamma_{22}$, $\Gamma_{12}$, and $\Gamma_{21}$ are nonlinear perturbations.
For detailed derivation of \eqref{pertf12}, see Sec. 2.

We introduce
\[
L(f_1,f_2)=(L_{11}(f_1)+L_{12}(f_1,f_2),L_{22}(f_2)+L_{21}(f_1,f_2)),
\]
and
\[
\Gamma(f_1,f_2)=(\Gamma_{11}(f_1)+\Gamma_{12}(f_1,f_2),\Gamma_{22}(f_2)+\Gamma_{21}(f_1,f_2)),
\]
to rewrite \eqref{pertf12} in the following succinct form:
\begin{align*}
(\partial_t+v\cdot\nabla_x)(f_1,f_2)=L(f_1,f_2)+\Gamma(f_1,f_2).
\end{align*}
To state our main result, we need to set up several notations.
\begin{itemize}
	\item  The constant $C$ in the estimates will be defined generically.
	\item $\langle \cdot,\cdot\rangle_{L^2_{v}}$ and $\langle\cdot,\cdot\rangle_{L^2_{x,v}}$ denote the standard $L^2$ inner product on $\mathbb{R}^3_v$ and  $\mathbb{T}^3_x \times \mathbb{R}^3_v$, respectively.
	\begin{align*}
	\langle f,g\rangle_{L^2_{v}}=\int_{\mathbb{R}^3}f(v)g(v)dv,	\quad\langle f,g\rangle_{L^2_{x,v}}=\int_{\mathbb{T}^3\times\mathbb{R}^3}f(x,v)g(x,v)dvdx.
	\end{align*}
	\item $\|\cdot\|_{L^2_v}$ and $\|\cdot\|_{L^2_{x,v}}$ denote the standard $L^2$ norms in $\mathbb{R}^3_v$ and $\mathbb{T}^3_x \times \mathbb{R}^3_v$, respectively:
	\begin{align*}
	\|f\|_{L^2_v}\equiv \left(\int_{\mathbb{R}^3}|f(v)|^2 dv\right)^{\frac{1}{2}},	\quad\|f\|_{L^2_{x,v}}\equiv \left(\int_{\mathbb{T}^3\times\mathbb{R}^3}|f(x,v)|^2 dvdx\right)^{\frac{1}{2}}.
	\end{align*}
	\item We define an $L^2$ inner product between two vectors $(f_1,f_2)$ and $(g_1,g_2)$ as
	\begin{align*}
	&\langle(f_1,f_2),(g_1,g_2)\rangle_{L^2_{v}}=\int_{\mathbb{R}^3}f_1(v)g_1(v)+f_2(v)g_2(v)dv,	\cr
	&\langle(f_1,f_2),(g_1,g_2)\rangle_{L^2_{x,v}}=\int_{\mathbb{T}^3\times\mathbb{R}^3}f_1(x,v)g_1(x,v)+f_2(x,v)g_2(x,v)dvdx.
	\end{align*}
	\item The standard $L^2$ norm of a vector denotes
	\begin{align*}
	&\| \left(f(x,v),g(x,v)\right)  \|_{L^2_v} = \left( \int_{\mathbb{R}^3}|f(v)|^2 +|g(v)|^2 dv\right)^{\frac{1}{2}}, \cr
	&\| \left(f(x,v),g(x,v)\right)  \|_{L^2_{x,v}} = \left( \int_{\mathbb{T}^3\times\mathbb{R}^3}|f(x,v)|^2 +|g(x,v)|^2 dvdx\right)^{\frac{1}{2}}.
	\end{align*}
	\item We use the following notations for multi-indices differential operators:
	\begin{align*}
	\alpha=[\alpha_0,\alpha_1,\alpha_2,\alpha_3], \quad \beta=[\beta_1,\beta_2,\beta_3],
	\end{align*}
	and
	\begin{align*}
	\partial^{\alpha}_{\beta}=\partial_t^{\alpha_0}\partial_{x_1}^{\alpha_1}\partial_{x_2}^{\alpha_2}\partial_{x_3}^{\alpha_3}\partial_{v_1}^{\beta_1}\partial_{v_2}^{\beta_2}\partial_{v_3}^{\beta_3}.
	\end{align*}
	\item We employ the following convention for simplicity.
	\begin{align*}
	\partial^{\alpha}_{\beta}(f_1,f_2)=\big(\partial^{\alpha}_{\beta}f_1,\partial^{\alpha}_{\beta}f_2\big).
	\end{align*}
	\item We define the high-order energy norm $\mathcal{E}_{N_1,N_2}(f_1(t), f_2(t))$:
	\begin{align*}
	\mathcal{E}_{N_1,N_2}(f_1(t), f_2(t))=\sum_{\substack{|\alpha|\leq N_1,~|\beta|\leq N_2 \cr N_1+N_2=N}} \|\partial^{\alpha}_{\beta}\big(f_1(t),f_2(t)\big)\|^2_{L^2_{x,v}}.
	\end{align*}
	For notational simplicity, we use $\mathcal{E}(t)$ to denote $\mathcal{E}_{N_1,N_2}(f_1(t), f_2(t))$ when the 
	dependency on $(N_1,N_2)$ is not relevant.
\end{itemize}	
We are now ready to state our main result.
\begin{theorem} Let $N\geq 3$. We set the macroscopic quantities of the initial data to the same with that of the global equilibria:
\begin{align*}
\int_{\mathbb{T}^3 \times \mathbb{R}^3}F_{k0}(x,v) \left(\begin{array}{c} 1 \cr m_kv \cr m_k|v|^2 \end{array}\right) dvdx =  \int_{\mathbb{T}^3 \times \mathbb{R}^3}\mu_k(v) \left(\begin{array}{c} 1 \cr m_kv \cr m_k|v|^2 \end{array}\right) dvdx,
\end{align*}
for $k=1,2$.
We define $f_{k0}$ as $F_{k0}=\mu_k+ \sqrt{\mu_k}f_{k0}$. 
Then there exists $\epsilon>0$ such that if $\mathcal{E}_{N_1,N_2}(f_{10},f_{20}) < \epsilon$, then there exists a unique global-in-time classical solution of \eqref{CCBGK} satisfying   
\begin{itemize}
\item The two distribution functions are non-negative: 
\[F_k(x,v,t)=\mu_k + \sqrt{\mu_k} f_k \geq 0.\]
\item The conservation laws hold \eqref{conservation}.
\item The distribution functions converge exponentially to the global equilibrium:
\[ \mathcal{E}_{N_1,N_2}(f_1,f_2)(t) \leq Ce^{-\eta t} \mathcal{E}_{N_1,N_2}(f_{10},f_{20}).  \]
In the case of $N_2=0$, that is, if $\mathcal{E}_{N_1,0}(f_{10},f_{20}) < \epsilon$,  we have the following more detailed convergence estimate: 
\[ \mathcal{E}_{N_1,0}(f_1,f_2)(t) \leq Ce^{-\eta \min\left\{(1-\delta),(1-\omega) \right\}t} \mathcal{E}_{N_1,0}(f_{10},f_{20}).  \]
\item Let $(f_1,f_2)$ and $(\bar{f}_1,\bar{f}_2)$ be solutions corresponding to the initial data $(f_{10},f_{20})$ and $(\bar{f}_{10},\bar{f}_{20})$, respectively, then the system satisfies the following $L^2$ stability:
\[ \| (f_1 -\bar{f}_1 ,f_2 -\bar{f}_2) \|_{L^2_{x,v}}  \leq  C  \|( f_{10} -\bar{f}_{10}, f_{20} -\bar{f}_{20}) \|_{L^2_{x,v}}. \]
\end{itemize}
\end{theorem}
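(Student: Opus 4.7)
The plan is to run the standard nonlinear energy method in the spirit of Guo's framework for Boltzmann-type equations: prove local existence of classical solutions by a contraction-mapping/iteration argument on \eqref{pertf12}, derive a uniform-in-time a priori estimate of the form
\[
\frac{d}{dt}\mathcal{E}(t) + \eta\,\mathcal{D}(t) \leq C\sqrt{\mathcal{E}(t)}\,\mathcal{D}(t),
\]
where $\mathcal{D}(t)$ is a dissipation functional equivalent to $\mathcal{E}(t)$ on the subspace selected by the conservation laws, and then close via a continuity/bootstrap argument under the smallness hypothesis $\mathcal{E}_{N_1,N_2}(f_{10},f_{20})<\epsilon$. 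Exponential decay and $L^2$ stability then follow by Gronwall and by energy estimates on the difference of two solutions, respectively.

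The main obstacle, and the core novelty of the paper, is the coercivity of the vector-valued linearized operator $L$. Each intra-species piece $L_{kk}$ is coercive only modulo its own five-dimensional kernel $\mathrm{span}\{\sqrt{\mu_k},v\sqrt{\mu_k},|v|^2\sqrt{\mu_k}\}$, so $L_{11}\oplus L_{22}$ has a ten-dimensional kernel, whereas the global system admits only six collision invariants (two species densities, three total-momentum components, one total energy) by \eqref{conservation}. The four extra directions correspond to the relative species velocities and the relative temperature, and must be dissipated by the inter-species operators $L_{12}, L_{21}$. I would compute
\[
-\langle L(f_1,f_2),(f_1,f_2)\rangle_{L^2_v}
\]
explicitly, showing that it equals the sum of the standard intra-species quadratic forms, controlling $\|(I-P_k)f_k\|_{L^2_v}^2$ for $k=1,2$, plus positive terms of the form
\[
(1-\delta)\,\bigl|\sqrt{n_{10}/n_{20}}\sqrt{m_1/m_2}\,\langle f_2,e_{2i}\rangle-\langle f_1,e_{1i}\rangle\bigr|^2,\qquad i=2,3,4,
\]
together with an analogous $(1-\omega)$-weighted term for the $i=5$ energy direction. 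Diagonalising this combined quadratic form on the ten-dimensional joint kernel of $L_{11}\oplus L_{22}$ yields a six-dimensional null space that matches the conserved quantities of \eqref{conservation} and a coercivity estimate $-\langle L\mathbf{f},\mathbf{f}\rangle \geq c\,\|(I-\mathbf{P})\mathbf{f}\|_{L^2_v}^2$, where $\mathbf{P}$ is the orthogonal projection onto the six-dimensional macroscopic space. The explicit $(1-\delta)$ and $(1-\omega)$ prefactors are what ultimately yield the refined rate $\min\{1-\delta,1-\omega\}$ in the $N_2=0$ case, since in that case no velocity derivatives enter the dissipation and the only source of decay for the macroscopic relative quantities is the inter-species part of $L$.

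Next I would perform the macroscopic estimate to recover dissipation of the surviving six-dimensional part $\mathbf{P}\mathbf{f}$. Writing $\mathbf{P}\mathbf{f}$ in terms of scalar fields $a_k(x,t),\,b(x,t),\,c(x,t)$ corresponding to the six conserved quantities, substituting into \eqref{pertf12}, and taking appropriate velocity moments produces local balance laws. Testing these against carefully chosen test functions, in the manner of Guo's macroscopic estimate, yields control of spatial derivatives of $\mathbf{P}\mathbf{f}$ by $\|(I-\mathbf{P})\mathbf{f}\|$ plus quadratic errors, and the hypothesis that the initial macroscopic quantities equal those of the global Maxwellians supplies zero-mean conditions on $a_k,b,c$, enabling a Poincaré inequality on $\mathbb{T}^3$ to close the estimate. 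Summing over $|\alpha|\leq N_1$, $|\beta|\leq N_2$ gives the desired coercive dissipation $\mathcal{D}(t)\gtrsim \mathcal{E}(t)$ modulo the conservation laws.

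For the remaining conclusions: the nonlinear terms $\Gamma_{ij}$ are quadratic in $f_1,f_2$ with rapidly-decaying Gaussian weights inherited from $\mu_k$, so for $N\geq 3$ Sobolev embedding on $\mathbb{T}^3_x$ gives $|\langle \partial^{\alpha}_{\beta}\Gamma,\partial^{\alpha}_{\beta}\mathbf{f}\rangle|\leq C\sqrt{\mathcal{E}(t)}\,\mathcal{D}(t)$, which is absorbed under smallness. Non-negativity of $F_k=\mu_k+\sqrt{\mu_k}f_k$ is obtained by rewriting \eqref{CCBGK} as $\partial_t F_k+v\cdot\nabla_x F_k+(n_1+n_2)F_k = n_k\mathcal{M}_{kk}+n_j\mathcal{M}_{kj}\geq 0$ and integrating along characteristics from $F_{k0}\geq 0$. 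The conservation laws \eqref{conservation} follow directly from the already-stated cancellation properties. Finally, $L^2$ stability is obtained by subtracting the equations for two solutions $(f_1,f_2)$ and $(\bar f_1,\bar f_2)$, running a plain $L^2_{x,v}$ energy estimate on the difference, and using the coercivity of $L$ together with the smallness of both solutions to control the nonlinear remainder.
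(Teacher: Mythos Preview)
Your proposal is correct and follows essentially the same route as the paper: the decomposition of the linearized operator into intra-species pieces plus inter-species momentum--energy exchange terms with explicit $(1-\delta)$ and $(1-\omega)$ prefactors, the identification of the six-dimensional kernel matching the conservation laws, Guo's macroscopic estimate via the fields $a_k,b,c$ with Poincar\'e on $\mathbb{T}^3$, and the standard continuity argument to close the bootstrap all mirror the paper's Sections 3--6. The only minor point worth noting is that the paper's explanation for why the explicit $\min\{1-\delta,1-\omega\}$ rate is lost when $N_2\geq 1$ is slightly different from yours: it is not that velocity derivatives fail to dissipate, but rather that the induction in $|\beta|$ produces constants $\eta_{m+1}$ depending recursively on $\eta_m$, so the explicit parameter dependence cannot be tracked through the hierarchy.
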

\begin{remark} The convergence rate in the case of $N_2=0$ shows that the higher interchange rate ($\delta$ and $\omega$ close to $0$) gives the faster convergence rates. 
\end{remark}

The most important step is the identification of the dissipation mechanism of the linearized multi-component relaxation operator. To investigate the dissipative property of $L$, we decompose the linearized inter-species relaxation operator $L_{ij} $ $(i\neq j)$ further into the mass interaction part $L^1_{ij}$ and the momentum-energy interaction part $L^2_{ij}$:
\begin{align*}
L_{12}^1(f_1)= n_{20}(P_1f_1-f_1), \quad L_{21}^1(f_2)= n_{10}(P_2f_2-f_2),
\end{align*}
and
\begin{align*}
\begin{split}
	L_{12}^2(f_1,f_2) &= n_{20} \bigg[(1-\delta) \sum_{2\leq i \leq 4}  \left(\sqrt{\frac{n_{10}}{n_{20}}}\sqrt{\frac{m_1}{m_2}}\langle f_2, e_{2i} \rangle_{L^2_v}-\langle f_1, e_{1i} \rangle_{L^2_v}\right)e_{1i} \cr
	&\quad+(1-\omega) \left(\sqrt{\frac{n_{10}}{n_{20}}}\langle f_2, e_{25} \rangle_{L^2_v}-\langle f_1, e_{15} \rangle_{L^2_v}\right)e_{15} \bigg], \cr
	L_{21}^2(f_1,f_2)  &= n_{10} \bigg[\frac{m_1}{m_2}(1-\delta)\sum_{2\leq i \leq 4}\left(\sqrt{\frac{n_{20}}{n_{10}}}\sqrt{\frac{m_2}{m_1}}\langle f_1, e_{1i} \rangle_{L^2_v}-\langle f_2, e_{2i} \rangle_{L^2_v}\right)e_{2i} \cr
	&\quad+ (1-\omega)\left(\sqrt{\frac{n_{20}}{n_{10}}}\langle f_1, e_{15} \rangle_{L^2_v}-\langle f_2, e_{25} \rangle_{L^2_v}\right)e_{25} \bigg],
\end{split}
\end{align*}
so that $L_{12}=L_{12}^1+L_{12}^2$ and $L_{21}=L_{21}^1+L_{21}^2$. We first derive from an explicit computation that the intra-species operator $L_{ii}$ and the mass interaction part of the inter-species operator $L^1_{12}$ and $L^1_{21}$ give rise to the following partial dissipative estimate:
\begin{align}\label{LtoP0}
	\begin{split}
		&\langle (L_{11}+ L_{12}^1)f_1 , f_1\rangle_{L^2_{x,v}}+\langle (L_{22}+L_{21})f_2  , f_2\rangle_{L^2_{x,v}} \cr
		&\hspace{3cm}= -(n_{10}+n_{20})\| (I-P_1,I-P_2)(f_1,f_2) \|_{L^2_{x,v}}^2.
	\end{split}
\end{align}
We note that the dissipation estimate above is too weak in that it involves 10-dimensional degeneracy,
which is 4-dimensional bigger than the 6-dimensional conservation laws in \eqref{conservation}. 
It is the additional dissipation from the momentum-energy interaction parts $L_{12}^2$, $L_{21}^2$ of the inter-species operators $L_{12}$ and $L_{21}$ that make up for the deficiency: 
\begin{align}\label{bring}
	\begin{split}
		\langle L_{12}^2,f_1 \rangle_{L^2_{x,v}}+\langle L_{21}^2,f_2 \rangle_{L^2_{x,v}} &\leq -\min\left\{(1-\delta),(1-\omega) \right\}	(n_{10}+n_{20}) \cr
		&\quad \times\left( \|(P_1,P_2)(f_1,f_2)\|_{L^2_{x,v}}^2-\|P(f_1,f_2) \|_{L^2_{x,v}}^2\right),
	\end{split}
\end{align}
where $P$ is an orthonormal  $L^2\times L^2$ projection on the space spanned by the following $6$-dimensional basis
\begin{align*}
\{(\sqrt{\mu_1},0),(0,\sqrt{\mu_2}), (m_1v\sqrt{\mu_1},m_2v\sqrt{\mu_2}),\left((m_1|v|^2-3)\sqrt{\mu_1},(m_2|v|^2-3)\sqrt{\mu_2}\right)\}.
\end{align*}
Then partial dissipation estimates \eqref{LtoP0} and \eqref{bring} complement each other to give rise to the following multi-component dissipation estimate for $L$:
\begin{align}\label{Lm}
\begin{split}
\langle L(f_1,f_2),(f_1,f_2)\rangle_{L^2_{x,v}}&\leq - (n_{10}+n_{20})\Big( \max\{\delta,\omega\}
\| (I-P_1,I-P_2)(f_1,f_2) \|_{L^2_{x,v}}^2 \cr
&\quad + \min\left\{(1-\delta),(1-\omega) \right\}\| (I-P)(f_1,f_2)\|_{L^2_{x,v}}^2 \Big).
\end{split}
\end{align}

The dissipation estimate \eqref{Lm}, together with further analysis on the degeneracy part through the standard micro-macro decomposition, provides the following full coercivity depending on the interchange rates: 
\begin{align*}
\langle L(\partial^{\alpha}(f_1,f_2)),\partial^{\alpha}(f_1,f_2)\rangle_{L^2_{x,v}} \leq - \eta \min\left\{(1-\delta),(1-\omega) \right\}\sum_{|\alpha|\leq N} \| (\partial^{\alpha}(f_1,f_2) \|_{L^2_{x,v}}^2.
\end{align*}
Due to the presence of the momentum interchange rate $\delta$ and the energy interchange rate $\omega$ between different components in the dissipation estimate, we see that the larger interchange rate (when $\delta$ and $\omega$ are close to zero) leads to the stronger dissipation, and therefore, the faster convergence to the global equilibrium: 
\begin{align*}
\sum_{|\alpha|\leq N}\|\partial^{\alpha} (f_1(t),f_2(t) )\|_{L^2_{x,v}}^2 \leq e^{-\eta \min\left\{(1-\delta),(1-\omega) \right\}t}\sum_{|\alpha|\leq N}\|\partial^{\alpha} (f_1(0), f_2(0))\|_{L^2_{x,v}}^2.
\end{align*}

\subsection{Literature review}
We start with a review of the mathematical results of the mono-species BGK model.
Perthame established the first result on global weak solutions for a general initial data in \cite{Perth}.
In \cite{PP1993}, the authors considered weighted-$L^{\infty}$ bounds to obtain the uniqueness.
Desvillettes considered the convergence to equilibrium in a weak sense \cite{Des}.
Ukai proved the existence of the stationary solution on a finite interval with inflow boundary condition in \cite{Ukai}. In \cite{ZH}, the $L^{\infty}$ work in \cite{Perth} is generalized to an weighted $L^p$ space.
Classical solutions near-global equilibrium is constructed in \cite{Bello} using the spectral analysis of Ukai \cite{Ukai spectral}, and by using the nonlinear energy method of Yan Guo \cite{Guo whole, Guo VMB, Guo VPB} in \cite{Yun1}.
The nonlinear energy method is then employed further to study several types of BGK models \cite{Yun1,Yun2,Yun3,HY,BY,Shakhov}.
Saint-Raymond considered the hydrodynamic limits of the BGK model in \cite{Saint,Saint2}.
For the numerical study of the BGK model, we refer to \cite{BCRY,MR3828279,Bennoune_2008,Crestetto_2012,Pirner4,CBRY1,CBRY2,BCRY,RSY,RY}.

Various BGK models to describe the dynamics of multi-component gases are proposed in the literature. Examples include the model of Gross and Krook \cite{gross_krook1956}, the model of Hamel \cite{hamel1965}, the model of Greene \cite{Greene}, the model of Garzo, Santos and Brey \cite{Garzo1989}, the model of Sofonea and Sekerka \cite{Sofonea2001}, the model by Andries, Aoki and Perthame  \cite{AndriesAokiPerthame2002}, the model of Brull, Pavan and Schneider \cite{Brull_2012}, the model of Klingenberg, Pirner and Puppo \cite{mixmodel}, the model of Haack, Hauck, Murillo \cite{haack}, the model of Bobylev, Bisi, Groppi, Spiga \cite{Bobylev}.
The BGK model for gas mixtures has also been extended to the ES-BGK model, polyatomic molecules, chemical reactions, or the quantum case; See for example \cite{MR3828279, Groppi, MR3960644, Pirner6, Bisi, Bisi2, Quantum,Yun,Stru}. 
For the applications of the mixture BGK models,
we refer to \cite{Puppo_2007, Jin_2010,Dimarco_2014, Bennoune_2008, Dimarco, Bernard_2015,BCRY,RSY}.
For the existence of the BGK model of gas mixtures, the mild solution was established in \cite{MR3720827}. In \cite{LiuPirner}, by constructing an entropy functional, the authors can prove exponential relaxation to equilibrium with explicit rates. The strategy is based on the entropy and spectral methods adapting Lyapunov’s direct method.

A review of the multi-species Boltzmann equation is in order.
In \cite{Guo VMB}, the author established the global existence for the mixture of a charged particle described by the Vlasov-Maxwell-Boltzmann equation.
The mild solution and uniform $L^1$ stability are obtained in \cite{HNYun}.
A mass diffusion problem of the mixture and the cross-species resonance is studied for a one-dimension case in \cite{SotiYu} based on the work in \cite{LiuYu}.
In \cite{Briant}, the author constructed the global-in-time mild solution near-global equilibrium for the mixture Boltzmann equation. 
The Vlasov–Poisson–Boltzmann equation was considered in \cite{DuanLiu} about large time asymptotic profiles when the different-species gases tend to two distinct global Maxwellians.
In \cite{GambaP}, the existence and uniqueness are constructed in spatially homogeneous settings when an initial data has upper and lower bounds for some polynomial moments.
The authors in \cite{BGPS} obtained some energy estimates.

For physical or engineering references on the studies on multi-component gases at the kinetic level,
we refer \cite{ABT,YoAo,Valo,TakaAo,SotiYu,AS,MMM,TaAo,TaAoMu,Sofonea2001}.  Some general reviews of the Boltzmann and the BGK model can be found in  \cite{B.G.K,Cercignani,DL,Chap,Cerci2,CIP,GL,V}.

This paper is organized as follows: 
In Sec. 2, we linearized the system \eqref{CCBGK} to obtain \eqref{pertf12}. 
In Sec. 3, we derive the dissipation estimate of the linearized relaxation operator. The local-in-time classical solution is constructed in Sec 4. 
In Section 5, The full coercivity of $L$ is recovered when the energy norm is sufficiently small.
Lastly, we establish the global-in-time classical solution in Sec 6.

%
%
%
%
%
%

\section{Linearization of the mixture BGK model}

\subsection{Linearization of the mixture Maxwellian}
In this part, we linearize the inter-species Maxwellian $\mathcal{M}_{12}$ and $\mathcal{M}_{21}$. We first define the macroscopic projection on $L^2_v$ and state the linearization result of the mono-species local Maxwellian $\mathcal{M}_{kk}$.
\begin{definition} We define the macroscopic projection operator $P_k$ in $L^2_v$ for $k=1,2$:
\begin{align*}
P_kf &= \frac{1}{n_{k0}} \int_{\mathbb{R}^3} f\sqrt{\mu_k} dv\sqrt{\mu_k} + \frac{m_k}{n_{k0}} \int_{\mathbb{R}^3} fv\sqrt{\mu_k} dv\cdot v\sqrt{\mu_k} \cr
&\quad + \frac{1}{6n_{k0}}\int_{\mathbb{R}^3} f(m_k|v|^2-3)\sqrt{\mu_k} dv (m_k|v|^2-3)\sqrt{\mu_k}.
\end{align*}
We denote $5$-dimensional basis as $(i=2,3,4)$
\begin{align}\label{basis}
\begin{split}
e_{k1}= \frac{1}{\sqrt{n_{k0}}}\sqrt{\mu_k},\qquad e_{ki}= \sqrt{\frac{m_k}{n_{k0}}}v_{i-1}\sqrt{\mu_k}, \qquad e_{k5} =\frac{m_k|v|^2-3}{ \sqrt{6n_{k0}} }\sqrt{\mu_k}.
\end{split}
\end{align}
\end{definition}
The $5$-dimensional basis set $\{e_{1i}\}_{i=1,\cdots,5}$ and $\{e_{2i}\}_{i=1,\cdots,5}$ construct an orthonormal basis in $L^2_v$, respectively. So, we can write 
\begin{align*}
P_1f = \sum_{1\leq i \leq 5}\langle f, e_{1i}  \rangle_{L^2_v}e_{1i}, \quad \textit{and} \quad 
P_2f = \sum_{1\leq i \leq 5}\langle f, e_{2i}  \rangle_{L^2_v}e_{2i}.
\end{align*}
\begin{lemma}\emph{\cite{Yun1}}\label{lin ii} The mono-species BGK Maxwellian $\mathcal{M}_{kk}$ is linearized as follows: 
\begin{align*} 
\mathcal{M}_{kk}(F_k) = \mu_k + \sqrt{\mu_k}P_kf_k + \sqrt{\mu_k}~\Gamma_{kk}(f_k,f_k),
\end{align*}
where the nonlinear term $\Gamma_{kk}(f_k,f_k)$ is given by
\begin{align*}
\Gamma_{kk}(f_k,f_k) &= \sum_{1 \leq i,j \leq 5} \frac{1}{\sqrt{\mu_k}}\int_0^1 \frac{P_{ij}(n_{k\theta},U_{k\theta},T_{k\theta},v-U_{k\theta},U_{k\theta})}{R_{ij}(n_{k\theta},T_{k\theta})}\mathcal{M}_{kk}(\theta)(1-\theta) d\theta \cr
&\quad \times \langle f_k,e_{ki} \rangle_{L^2_v}\langle f_k, e_{kj} \rangle_{L^2_v},
\end{align*}
for k=1,2. The function $P_{ij}(x_1,\cdots,x_5)$ denotes a generic polynomial depending on $(x_1,\cdots,x_5)$ and $R_{ij}(x,y)$ denotes a generic monomial $R_{ij}(x,y)=x^ny^m$, where $n,m\in \mathbb{N}\cup\{0\}$.
\end{lemma}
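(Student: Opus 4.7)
The plan is to perform a second-order Taylor expansion of $\mathcal{M}_{kk}$ viewed as a functional of $F_k$ through its macroscopic fields $(n_k,U_k,T_k)$, using $\mu_k$ (which is itself a Maxwellian with macroscopic parameters $(n_{k0},0,1)$) as the base point. Concretely, set $g_k:=F_k-\mu_k=\sqrt{\mu_k}f_k$, introduce the affine interpolation $F_{k,\theta}:=\mu_k+\theta g_k$ for $\theta\in[0,1]$, and let $(n_{k\theta},U_{k\theta},T_{k\theta})$ denote its macroscopic quantities, which are smooth in $\theta$ for small perturbations. Define $\Phi(\theta):=\mathcal{M}_{kk}(F_{k,\theta})$ and apply Taylor's formula with integral remainder,
\begin{align*}
\Phi(1)=\Phi(0)+\Phi'(0)+\int_0^1\Phi''(\theta)(1-\theta)\,d\theta,
\end{align*}
so the proof reduces to identifying $\Phi(0)$, $\Phi'(0)$, and the structure of $\Phi''(\theta)$.

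The first two are direct: $\Phi(0)=\mu_k$ by construction, while $\Phi'(0)$ comes from the chain rule using the partial derivatives $\partial_n\mathcal{M}_{kk}$, $\partial_U\mathcal{M}_{kk}$, $\partial_T\mathcal{M}_{kk}$ at $(n_{k0},0,1)$ together with the initial slopes of $(n_{k\theta},U_{k\theta},T_{k\theta})$. These slopes can be read off from the affineness (in $\theta$) of the moments $\int F_{k,\theta}\,dv$, $\int F_{k,\theta}v\,dv$, and $\int F_{k,\theta}|v|^2 dv$, and they reduce to $\int g_k(1,v,m_k|v|^2)\,dv$, each of which is a linear combination of $\langle f_k,e_{ki}\rangle_{L^2_v}$ multiplied by $\sqrt{n_{k0}}$ thanks to the basis \eqref{basis}. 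An explicit calculation at $(n_{k0},0,1)$ then shows $\Phi'(0)=\sqrt{\mu_k}P_k f_k$, matching the claimed linear term.

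For the remainder I would use the structural invariant that every partial derivative of $\mathcal{M}_{kk}(n,U,T)$ in $(n,U,T)$ produces $\mathcal{M}_{kk}$ multiplied by a polynomial in $(v-U,U)$ divided by a monomial in $(n,T)$, and that this class is closed under further differentiation. Combining this with the fact that $\partial_\theta n_{k\theta}$, $\partial_\theta U_{k\theta}$, $\partial_\theta T_{k\theta}$ and their second $\theta$-derivatives can all be expressed as the moment functionals above multiplied by rational functions of $n_{k\theta}$ and $T_{k\theta}$, the chain rule applied twice yields
\begin{align*}
\Phi''(\theta)=\sum_{1\leq i,j\leq 5}\frac{P_{ij}(n_{k\theta},U_{k\theta},T_{k\theta},v-U_{k\theta},U_{k\theta})}{R_{ij}(n_{k\theta},T_{k\theta})}\,\mathcal{M}_{kk}(\theta)\,\langle f_k,e_{ki}\rangle_{L^2_v}\langle f_k,e_{kj}\rangle_{L^2_v},
\end{align*}
for generic polynomials $P_{ij}$ and monomials $R_{ij}$. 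Substituting this into the integral remainder and factoring out $\sqrt{\mu_k}$ recovers the stated form of $\Gamma_{kk}$.

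The main obstacle is purely the bookkeeping in $\Phi''(\theta)$: one has to track both the ``diagonal'' contributions in which $\partial_\theta^2$ hits the same macroscopic quantity twice --- producing $\partial_\theta^2 U_{k\theta}$ and $\partial_\theta^2 T_{k\theta}$, which are themselves products of the linear moment functionals because $n_{k\theta}U_{k\theta}$ and $n_{k\theta}T_{k\theta}+\tfrac{m_k}{3}n_{k\theta}|U_{k\theta}|^2$ are affine in $\theta$ --- and the ``cross'' contributions in which the two $\partial_\theta$'s hit different macroscopic quantities. In every case the outcome falls within the polynomial/monomial template above, so no new type of term arises and the lemma follows.
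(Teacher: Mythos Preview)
Your approach is correct and is essentially the same Taylor-expansion-in-$\theta$ strategy that the paper (via \cite{Yun1} and its own Proposition~\ref{linearize}) uses: interpolate from the equilibrium to the perturbed state, expand to second order with integral remainder, identify the linear term as $\sqrt{\mu_k}P_kf_k$, and observe that every second derivative of $\mathcal{M}_{kk}$ stays in the polynomial-over-monomial-times-Maxwellian class.

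The only minor organizational difference worth noting is in the choice of intermediate variables for the chain rule. You differentiate directly in $(n_{k\theta},U_{k\theta},T_{k\theta})$, which forces you to track the nonzero second $\theta$-derivatives $\partial_\theta^2 U_{k\theta}$ and $\partial_\theta^2 T_{k\theta}$ (the ``diagonal'' terms you flag). The paper instead passes through the variables $(n_{k\theta},\,n_{k\theta}U_{k\theta},\,G_{k\theta})$, which are \emph{affine} in $\theta$ by construction (equation~\eqref{transition}), so all the $\theta$-dependence is pushed into the Jacobian $J_{k\theta}^{-1}$ of Lemma~\ref{Jaco}. This makes the bilinear structure in $\langle f_k,e_{ki}\rangle\langle f_k,e_{kj}\rangle$ emerge automatically from $(\partial_\theta H_{k\theta})^T\{\nabla^2_{H_{k\theta}}\mathcal{M}_{kk}\}(\partial_\theta H_{k\theta})$ with no extra ``diagonal'' contributions to chase. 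Both routes land in the same place; the paper's is slightly cleaner for bookkeeping, while yours is more self-contained.
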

\begin{proof}
The linearization of the mono-species BGK Maxwellian $\mathcal{M}_{kk}$ is in \cite{Yun1} for the case $n_{k0}=1$ and $m_k=1$. For a general $n_{k0}$ and $m_k$, the linearization of $\mathcal{M}_{kk}$ is a special case of the linearization of $\mathcal{M}_{12}$ and $\mathcal{M}_{21}$ 
with the choice $\delta=\omega=1$ (See \eqref{lin M12} and \eqref{lin M21}, respectively).
\end{proof}

\begin{proposition}\label{linearize} The multi-species BGK Maxwellians $\mathcal{M}_{12}$ and $\mathcal{M}_{21}$ are linearized as follows:
\begin{align*}
\mathcal{M}_{12}(F)&= \mu_1 + P_1f_1\sqrt{\mu_1} 
+ (1-\delta) \sum_{2\leq i \leq 4}  \left(\sqrt{\frac{n_{10}}{n_{20}}}\sqrt{\frac{m_1}{m_2}}\langle f_2, e_{2i} \rangle_{L^2_v}-\langle f_1, e_{1i} \rangle_{L^2_v}\right)e_{1i}\sqrt{\mu_1} \cr
&\quad + (1-\omega) \left(\sqrt{\frac{n_{10}}{n_{20}}}\langle f_2, e_{25} \rangle_{L^2_v}-\langle f_1, e_{15} \rangle_{L^2_v}\right)e_{15}\sqrt{\mu_1} +\sqrt{\mu_1}\Gamma_{12}(f_1,f_2),
\end{align*}
and
\begin{align*}
\mathcal{M}_{21}(F)&= \mu_2+ P_2f_2\sqrt{\mu_2}  +  \frac{m_1}{m_2}(1-\delta)\sum_{2\leq i \leq 4}\left(\sqrt{\frac{n_{20}}{n_{10}}}\sqrt{\frac{m_2}{m_1}}\langle f_1, e_{1i} \rangle_{L^2_v}-\langle f_2, e_{2i} \rangle_{L^2_v}\right)e_{2i}\sqrt{\mu_2} \cr
&\quad + (1-\omega)\left(\sqrt{\frac{n_{20}}{n_{10}}}\langle f_1, e_{15} \rangle_{L^2_v}-\langle f_2, e_{25} \rangle_{L^2_v}\right)e_{25}\sqrt{\mu_2} +\sqrt{\mu_2}\Gamma_{21}(f_1,f_2).
\end{align*}
We give the precise definition of the nonlinear terms $\Gamma_{12}$ and $\Gamma_{21}$ in Section \ref{linMBGK}
\end{proposition}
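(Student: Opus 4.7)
The plan is to perform a second-order Taylor expansion of $\mathcal{M}_{12}$, regarded as a smooth function of the three macroscopic parameters $(n_1, U_{12}, T_{12})$, about the global-equilibrium values $(n_{10}, 0, 1)$. At equilibrium $U_1 = U_2 = 0$ and $T_1 = T_2 = 1$, so $U_{12}|_{\mathrm{eq}} = 0$ and $T_{12}|_{\mathrm{eq}} = \omega + (1-\omega) = 1$, which gives the zeroth-order term $\mu_1$. The argument for $\mathcal{M}_{21}$ will be parallel, with $(1-\delta)$ replaced by $\tfrac{m_1}{m_2}(1-\delta)$ to reflect the asymmetric convex-combination formula for $U_{21}$.

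For the linear part I would compute
$\partial_{n_1}\mathcal{M}_{12}|_{\mathrm{eq}} = \mu_1/n_{10}$,
$\partial_{U_{12,j}}\mathcal{M}_{12}|_{\mathrm{eq}} = m_1 v_j \mu_1$,
$\partial_{T_{12}}\mathcal{M}_{12}|_{\mathrm{eq}} = \tfrac{1}{2}(m_1|v|^2 - 3)\mu_1$,
and combine them with the first-order chain rule $\delta U_{12} = \delta\,\delta U_1 + (1-\delta)\,\delta U_2$ and $\delta T_{12} = \omega\,\delta T_1 + (1-\omega)\,\delta T_2$ (the $\gamma|U_2 - U_1|^2$ piece in $T_{12}$ is automatically quadratic and thus passes into the remainder). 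Substituting $F_k = \mu_k + \sqrt{\mu_k}\,f_k$ yields $\delta n_k$, $\delta U_{k,j}$ and $\delta T_k$ as moments of $f_k\sqrt{\mu_k}$ against $1$, $v$, $|v|^2$, which via the normalization \eqref{basis} are scalar multiples of $\langle f_k, e_{ki}\rangle_{L^2_v}$. Collecting terms, the $f_1$-contributions assemble into $P_1 f_1\sqrt{\mu_1}$: the intrinsic coefficients $\delta$ and $\omega$ sitting in front of $\delta U_1$ and $\delta T_1$ combine with the $(1-\delta)$, $(1-\omega)$ self-terms pulled back from the cross-coupling to restore the full projection. The $f_2$-contributions with prefactors $(1-\delta)$ and $(1-\omega)$ produce exactly the stated inter-species correction; the rescaling factors $\sqrt{n_{10}/n_{20}}\sqrt{m_1/m_2}$ and $\sqrt{n_{10}/n_{20}}$ arise from converting $f_2$-moments (naturally weighted by $\sqrt{\mu_2}$) into coefficients on the $e_{1i}\sqrt{\mu_1}$ basis.

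The nonlinear remainder $\sqrt{\mu_1}\,\Gamma_{12}$ is then defined by the integral-remainder form of Taylor's theorem,
\[
\sqrt{\mu_1}\,\Gamma_{12}(f_1, f_2) = \int_0^1 (1-\theta)\, D^2\mathcal{M}_{12}\big|_\theta\!\left[(\delta n_1, \delta U_{12}, \delta T_{12})\right]^{\otimes 2}\!d\theta,
\]
together with the contribution of $\gamma|U_2 - U_1|^2$ entering through $\partial_{T_{12}}\mathcal{M}_{12}$. Since every second partial of a Maxwellian has the form (polynomial in $v - U_{12,\theta}$ and $U_{12,\theta}$) divided by a monomial in $(n_{k\theta}, T_{k\theta})$ times $\mathcal{M}_{12}(\theta)$, factoring out $\sqrt{\mu_1}$ and expanding the bilinear pair $(\delta n_1, \delta U_{12}, \delta T_{12})^{\otimes 2}$ in the moments $\langle f_k, e_{ki}\rangle$ directly produces the structural form $(P_{ij}/R_{ij})\,\mathcal{M}_{12}(\theta)$ announced in Lemma \ref{lin ii}; the precise expression is deferred to Section \ref{linMBGK}. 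The main obstacle is purely combinatorial bookkeeping: tracking how the normalization constants from the chain rule combine so that the asserted prefactors $\sqrt{n_{k0}/n_{j0}}\sqrt{m_k/m_j}$ appear with the right sign and placement, and, in the $\mathcal{M}_{21}$ computation, that the asymmetric factor $\tfrac{m_1}{m_2}(1-\delta)$ emerges intact from the formula for $U_{21}$.
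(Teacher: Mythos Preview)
Your computation of the first-order term is essentially the paper's: both differentiate $\mathcal{M}_{12}$ with respect to the macroscopic parameters, evaluate at equilibrium, and push through the chain rule $U_{12}=\delta U_1+(1-\delta)U_2$, $T_{12}=\omega T_1+(1-\omega)T_2+O(|U_1-U_2|^2)$. The paper packages this via a $\theta$-homotopy and the Jacobian $\partial(n_k,n_kU_k,G_k)/\partial(n_k,U_k,T_k)$ (Lemma~\ref{Jaco}), but the partial derivatives you list are exactly those of Lemma~\ref{M_12 diff}, and after adding and subtracting the $(1-\delta),(1-\omega)$ self-terms the linear output agrees.

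There is, however, a real gap in your treatment of the remainder. You assert that $\delta U_{k,j}$ and $\delta T_k$ ``are moments of $f_k\sqrt{\mu_k}$''; this is false. What is linear in $f_k$ is $n_kU_k=\int f_k v\sqrt{\mu_k}\,dv$ and $G_k=\tfrac{1}{\sqrt{6}}\int f_k(m_k|v|^2-3)\sqrt{\mu_k}\,dv$; the physical fields $U_k=(n_kU_k)/n_k$ and $T_k$ are rational functions of these moments and carry their own nonlinear tails in $f_k$. Your integral remainder captures only the second-order part of $\mathcal{M}_{12}$ in the variables $(n_1,U_{12},T_{12})$, plus the explicit $\gamma|U_1-U_2|^2$ piece; it omits $D\mathcal{M}_{12}\big|_{\mathrm{eq}}$ acting on the nonlinear parts of $U_k$ and $T_k$ themselves. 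The paper sidesteps this by expanding not in $(n_k,U_k,T_k)$ but in the \emph{moment} variables $(n_{k\theta},n_{k\theta}U_{k\theta},G_{k\theta})$, which are by construction linear in $\theta$ and hence in $f_k$ (see \eqref{transition} and \eqref{lin1}); the inverse Jacobian $J_{k\theta}^{-1}$ then pushes the nonlinear conversion between moment and physical variables entirely into $\mathcal{M}_{12}''(\theta)$, so that $\int_0^1\mathcal{M}_{12}''(\theta)(1-\theta)\,d\theta$ is the complete remainder with no leftover pieces. Your route can be repaired by inserting a second Taylor layer for $U_k,T_k$ in $f_k$, but the resulting remainder is not the one you wrote, and matching it to the structure needed later in Lemma~\ref{nonlin} is more than ``purely combinatorial bookkeeping.''
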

\begin{proof}
We first define a transition of the macroscopic fields:
\begin{align}\label{transition}
n_{k\theta} = \theta n_k +(1-\theta)n_{k0}, \quad n_{k\theta}U_{k\theta} = \theta n_k U_k , \quad G_{k\theta} = \theta G_k,
\end{align}
where
\begin{align*}
G_k = \frac{3n_k T_k + m_k n_k |U_k|^2-3n_k}{\sqrt{6}},
\end{align*}
for $k=1,2$. We also denote multi-species macroscopic fields as   
\begin{align}\label{12theta}
\begin{split}
U_{12\theta}&=\delta U_{1\theta} + (1-\delta)U_{2\theta}, \cr
U_{21\theta}&=\frac{m_1}{m_2}(1-\delta)U_{1\theta} + \left(1-\frac{m_1}{m_2}(1-\delta)\right)U_{2\theta}, \cr
T_{12\theta}&= \omega T_{1\theta} + (1-\omega)T_{2\theta} + \gamma |U_{2\theta}-U_{1\theta}|^2, \cr
T_{21\theta}&= (1-\omega) T_{1\theta} + \omega T_{2\theta} +\left(\frac{1}{3}m_1(1-\delta)\left(\frac{m_1}{m_2}(\delta-1)+1+\delta\right)-\gamma\right) |U_{2\theta}-U_{1\theta}|^2 .
\end{split}
\end{align}
Then we consider the multi-species BGK Maxwellians $\mathcal{M}_{12}$ and $\mathcal{M}_{21}$, which depend on $\theta$:
\begin{align*}
\mathcal{M}_{12}(\theta) = \frac{n_{1\theta}}{\sqrt{2\pi\frac{T_{12\theta}}{m_1}}^3} \exp\left(-\frac{|v-U_{12\theta}|^2}{2\frac{T_{12\theta}}{m_1}}\right), \quad \mathcal{M}_{21}(\theta) = \frac{n_{2\theta}}{\sqrt{2\pi\frac{T_{21\theta}}{m_2}}^3} \exp\left(-\frac{|v-U_{21\theta}|^2}{2\frac{T_{21\theta}}{m_2}}\right).
\end{align*}
The definition of $n_{k\theta},U_{k\theta},T_{k\theta}$ gives 
\begin{align*}
\left(n_{k\theta},U_{k\theta},T_{k\theta} \right)|_{\theta=1} = (n_k,U_k,T_k), \quad \textit{and} \quad \left(n_{k\theta},U_{k\theta},T_{k\theta} \right)|_{\theta=0} = (n_{k0},0,1),
\end{align*}
so we have
\begin{align*}
\mathcal{M}_{12}(1) = \mathcal{M}_{12}, \quad  \mathcal{M}_{12}(0) = \mu_1,
\end{align*}
and
\begin{align*}
\mathcal{M}_{21}(1) = \mathcal{M}_{21}, \quad  \mathcal{M}_{21}(0) = \mu_2,
\end{align*}
where we used $U_{120} = U_{210}=0$ and $T_{120} = T_{210}=1$. 
We apply the Taylor expansion to $\mathcal{M}_{12}(\theta)$ and $\mathcal{M}_{21}(\theta)$:
\begin{align*}
\mathcal{M}_{12}(1)=\mu_1+\mathcal{M}_{12}'(0)+\int_0^1\mathcal{M}_{12}''(\theta)(1-\theta)d\theta,
\end{align*}
and
\begin{align*}
\mathcal{M}_{21}(1)=\mu_2+\mathcal{M}_{21}'(0)+\int_0^1\mathcal{M}_{21}''(\theta)(1-\theta)d\theta.
\end{align*}
By the chain rule, we compute the linear term $\mathcal{M}_{ij}'(0)$:
\begin{multline}\label{M'(0)}
\mathcal{M}_{ij}'(0)=\left(\frac{d (n_{1\theta}, n_{1\theta}U_{1\theta}, G_{1\theta},n_{2\theta}, n_{2\theta}U_{2\theta}, G_{2\theta})}{d \theta}\right)^{T}   \cr
\times \left( \frac{\partial(n_{1\theta}, n_{1\theta}U_{1\theta}, G_{1\theta},n_{2\theta}, n_{2\theta}U_{2\theta}, G_{2\theta})} {\partial(n_{1\theta},U_{1\theta},T_{1\theta},n_{2\theta},U_{2\theta},T_{2\theta})} \right)^{-1} \left(\nabla_{(n_{1\theta},U_{1\theta},T_{1\theta},n_{2\theta},U_{2\theta},T_{2\theta})}\mathcal{M}_{ij}(\theta)\right)\bigg|_{\theta=0},
\end{multline}  
for $(i,j)=(1,2)$ or $(2,1)$.
Although $\mathcal{M}_{12}$ does not depend on $n_2$, we use the above form for the convenience of the calculation. In this proposition, we focus on the linear term $\mathcal{M}_{12}'(0)$ and $\mathcal{M}_{21}'(0)$.
The exact form of the nonlinear terms will be presented in Section \ref{linMBGK}.
The remaining proof proceeds by stating some auxiliary lemmas below.
\end{proof}

\begin{lemma}\emph{\cite{Yun1}}\label{Jaco} Let us define
\[G = \frac{3n T + m n |U|^2-3n}{\sqrt{6}}. \]
Then we have 
\begin{align*}
J = \frac{\partial(n,n U,G)} {\partial(n,U,T)} = \left[ {\begin{array}{cccccc}
1 & 0 & 0 & 0 & 0  \\ 
U_1 & n & 0 & 0 & 0 \\
U_2 & 0 & n & 0 & 0 \\
U_3 & 0 & 0 & n & 0 \\
\frac{3T+m|U|^2-3}{\sqrt{6}} & \frac{2n U_1m}{\sqrt{6}} & \frac{2n U_2m}{\sqrt{6}} & \frac{2n U_3m}{\sqrt{6}} & \frac{3n }{\sqrt{6}}
\end{array} } \right],
\end{align*}
and
\begin{align*}
J^{-1} = \left(\frac{\partial(n,n U,G)} {\partial(n,U,T)}\right)^{-1} = \left[ {\begin{array}{cccccc}
1 & 0 & 0 & 0 & 0  \\ 
-\frac{U_1}{n} & \frac{1}{n} & 0 & 0 & 0 \\
-\frac{U_2}{n} & 0 & \frac{1}{n} & 0 & 0 \\
-\frac{U_3}{n} & 0 & 0 & \frac{1}{n} & 0 \\
\frac{m|U|^2-3T+3}{3n} & -\frac{2m}{3}\frac{U_1}{n} & -\frac{2m}{3}\frac{U_2}{n} & -\frac{2m}{3}\frac{U_3}{n} & \sqrt{\frac{2}{3}}\frac{1}{n}
\end{array} } \right].
\end{align*}
\end{lemma}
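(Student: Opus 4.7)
The plan is to verify both matrix identities by direct calculation from the definitions; no clever idea is required beyond organized bookkeeping.

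First I would compute each entry of $J$ by differentiating the five components $(n, nU_1, nU_2, nU_3, G)$ with respect to the five variables $(n, U_1, U_2, U_3, T)$. The first row is immediate from $\partial n/\partial n = 1$ and $\partial n/\partial U_j = \partial n/\partial T = 0$. The middle three rows follow from the product rule applied to $nU_i$, yielding $\partial(nU_i)/\partial n = U_i$, $\partial(nU_i)/\partial U_j = n\delta_{ij}$, and $\partial(nU_i)/\partial T = 0$. The final row differentiates $G = (3nT + mn|U|^2 - 3n)/\sqrt{6}$ to obtain $\partial G/\partial n = (3T + m|U|^2 - 3)/\sqrt{6}$, $\partial G/\partial U_i = 2mnU_i/\sqrt{6}$, and $\partial G/\partial T = 3n/\sqrt{6}$. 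Assembling these entries produces the claimed matrix for $J$.

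For $J^{-1}$, I would exploit the fact that $J$ is lower triangular with diagonal entries $(1, n, n, n, 3n/\sqrt{6})$, so $J$ is invertible whenever $n > 0$ and its inverse is also lower triangular with reciprocal diagonal entries $(1, 1/n, 1/n, 1/n, \sqrt{2/3}/n)$. The off-diagonal entries are then determined column-by-column from the identity $JJ^{-1}=I$. For the first column, the vanishing of the $(i,1)$-entry of $JJ^{-1}$ for $i=2,3,4$ forces $(J^{-1})_{i1} = -U_{i-1}/n$, and the $(5,1)$ condition yields $(J^{-1})_{51} = (m|U|^2 - 3T + 3)/(3n)$ after using $\sum_{i=1}^3 (2mnU_i/\sqrt{6})(-U_i/n) = -2m|U|^2/\sqrt{6}$ and combining with the $(3T+m|U|^2-3)/\sqrt{6}$ contribution. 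The same procedure applied to columns $2,3,4$ fixes $(J^{-1})_{5j} = -(2m/3)(U_{j-1}/n)$ for $j=2,3,4$, and this exhausts the nonzero off-diagonal entries.

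The only place where one can easily slip is the bottom-left entry: one has to carry the $\sqrt{6}$ from the definition of $G$ through the cancellation against the $2mnU_i/\sqrt{6}$ factors in the last row, and keep track of which terms carry the factor $3$ and which carry $m|U|^2$. Apart from this clerical care, the lemma is entirely routine, so I do not anticipate any substantive mathematical obstacle.
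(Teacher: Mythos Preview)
Your proposal is correct and follows exactly the approach the paper indicates: the paper's proof simply states that the case $m_i=1$ is in \cite{Yun1} and that the general case follows by the same explicit calculation, which it then omits. Your direct computation of the Jacobian entries and the lower-triangular back-substitution for $J^{-1}$ is precisely that omitted calculation.
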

\begin{proof}
In the case of $m_i=1$, it is proved in \cite{Yun1}, and by the same explicit calculation, we can extend the result for general $m_i$. We omit it.
\end{proof}
\subsubsection{Linearization of $\mathcal{M}_{12}$} We first consider the calculation of $\mathcal{M}'_{12}(0)$ in \eqref{M'(0)}.
\begin{lemma}\label{M_12 diff} We have
\begin{align*}
&(1) ~ \frac{\partial \mathcal{M}_{12}(\theta)}{\partial n_{1\theta}}\bigg|_{\theta=0} =\frac{1}{n_{10}}\mu_1,
&(2) ~ \frac{\partial \mathcal{M}_{12}(\theta)}{\partial U_{1\theta}}\bigg|_{\theta=0} &= \delta m_1 v \mu_1, \cr 
&(3) ~ \frac{\partial \mathcal{M}_{12}(\theta)}{\partial T_{1\theta}}\bigg|_{\theta=0} = \omega \frac{m_1|v|^2-3}{2}\mu_1,
&(4) ~\frac{\partial \mathcal{M}_{12}(\theta)}{\partial U_{2\theta}}\bigg|_{\theta=0} &= (1-\delta)m_1 v\mu_1, \cr
&(5) ~ \frac{\partial \mathcal{M}_{12}(\theta)}{\partial T_{2\theta}}\bigg|_{\theta=0} = (1-\omega)\frac{m_1|v|^2-3}{2}\mu_1. &
\end{align*}
\end{lemma}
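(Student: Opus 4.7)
My plan is to prove each of the five identities by direct differentiation via the chain rule, exploiting the fact that at $\theta = 0$ all bulk velocities vanish and all temperatures equal $1$, so that $U_{12\theta}|_{\theta=0} = 0$, $T_{12\theta}|_{\theta=0} = 1$, and $\mathcal{M}_{12}(0) = \mu_1$. The key observation is that $\mathcal{M}_{12}(\theta)$ depends on $(n_{1\theta}, U_{1\theta}, T_{1\theta}, U_{2\theta}, T_{2\theta})$ only through the three composite variables $n_{1\theta}$, $U_{12\theta}$, and $T_{12\theta}$, so I can first compute the three canonical derivatives
\begin{align*}
\partial_{n_{1\theta}}\mathcal{M}_{12} &= \tfrac{1}{n_{1\theta}}\mathcal{M}_{12}, \\
\partial_{U_{12\theta}}\mathcal{M}_{12} &= \tfrac{m_1(v-U_{12\theta})}{T_{12\theta}}\mathcal{M}_{12}, \\
\partial_{T_{12\theta}}\mathcal{M}_{12} &= \left(-\tfrac{3}{2T_{12\theta}} + \tfrac{m_1|v-U_{12\theta}|^2}{2T_{12\theta}^2}\right)\mathcal{M}_{12},
\end{align*}
and then chain through the definitions of $U_{12\theta}$ and $T_{12\theta}$ from \eqref{12theta}.

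For (1), only the $n_{1\theta}$ prefactor depends on $n_{1\theta}$, so the result is immediate upon evaluating at $\theta = 0$. For (2) and (4), I would note that $U_{1\theta}$ enters $U_{12\theta}$ with coefficient $\delta$ and $U_{2\theta}$ enters with coefficient $1-\delta$; crucially, $U_{1\theta}$ and $U_{2\theta}$ also appear in $T_{12\theta}$ through the quadratic term $\gamma|U_{2\theta}-U_{1\theta}|^2$, whose gradient is $\pm 2\gamma(U_{2\theta}-U_{1\theta})$, which vanishes at $\theta = 0$. This is the one subtle point worth highlighting: these quadratic contributions drop out precisely because we are linearizing at the global Maxwellian where $U_{1\theta} = U_{2\theta} = 0$. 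After evaluating at $\theta = 0$, where $U_{12\theta} = 0$ and $T_{12\theta} = 1$, the $U_{12\theta}$-derivative reduces to $m_1 v\, \mu_1$, and multiplying by $\delta$ or $1-\delta$ respectively yields the claimed formulas.

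For (3) and (5), the argument is the same: $T_{1\theta}$ enters $T_{12\theta}$ with coefficient $\omega$ and $T_{2\theta}$ with coefficient $1-\omega$. At $\theta = 0$, the $T_{12\theta}$-derivative of $\mathcal{M}_{12}$ reduces to $\bigl(-\tfrac{3}{2} + \tfrac{m_1|v|^2}{2}\bigr)\mu_1 = \tfrac{m_1|v|^2-3}{2}\mu_1$, and multiplying by the appropriate factor gives the claim.

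I do not expect any serious obstacle here; the computation is routine once one notices the cancellation of the quadratic cross-term in $T_{12\theta}$ at $\theta = 0$. The care required is primarily bookkeeping — correctly identifying which of the five independent variables $(n_{1\theta}, U_{1\theta}, T_{1\theta}, U_{2\theta}, T_{2\theta})$ feed into which composite variable $(U_{12\theta}, T_{12\theta})$ and with which linear coefficient — and consistently using the Gaussian identities $\int \mu_1\, dv = n_{10}$, which need not actually be invoked since the lemma asks only for pointwise derivative formulas rather than integrated quantities.
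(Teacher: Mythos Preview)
Your proposal is correct and follows essentially the same approach as the paper: both compute the chain-rule derivatives of $\mathcal{M}_{12}$ through the composite variables $U_{12\theta}$ and $T_{12\theta}$, note that the contribution from the quadratic term $\gamma|U_{2\theta}-U_{1\theta}|^2$ in $T_{12\theta}$ vanishes at $\theta=0$ since $U_{1\theta}=U_{2\theta}=0$ there, and then evaluate using $U_{12\theta}|_{\theta=0}=0$, $T_{12\theta}|_{\theta=0}=1$, $\mathcal{M}_{12}(0)=\mu_1$. Your presentation is slightly more streamlined in that you isolate the three canonical derivatives $\partial_{n_{1\theta}}\mathcal{M}_{12}$, $\partial_{U_{12\theta}}\mathcal{M}_{12}$, $\partial_{T_{12\theta}}\mathcal{M}_{12}$ up front, but the substance is identical.
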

\begin{proof}
For readability, we ignore the dependence on $\theta$.
\newline
(1) By an explicit computation, we have 
\begin{align*}
\frac{\partial \mathcal{M}_{12}}{\partial n_1} = \frac{1}{n_1}\mathcal{M}_{12}.
\end{align*}
(2) Note that both $U_{12}$ and $T_{12}$ depend on $U_1$. So that, the chain rule gives 
\begin{align*}
\frac{\partial \mathcal{M}_{12}}{\partial U_1} &= \frac{\partial U_{12}}{\partial U_1}\frac{\partial \mathcal{M}_{12}}{\partial U_{12}} + \frac{\partial T_{12}}{\partial U_1}\frac{\partial \mathcal{M}_{12}}{\partial T_{12}} \cr
&= \delta m_1\frac{v-U_{12}}{T_{12}}\mathcal{M}_{12} -2\gamma(U_2-U_1)\left(-\frac{3}{2}\frac{1}{T_{12}}+\frac{m_1|v-U_{12}|^2}{2T_{12}^2}\right)\mathcal{M}_{12}.
\end{align*}
(3) An explicit calculation gives
\begin{align*}
\frac{\partial \mathcal{M}_{12}}{\partial T_1} = \frac{\partial T_{12}}{\partial T_1}\frac{\partial \mathcal{M}_{12}}{\partial T_{12}} = \omega \left(-\frac{3}{2}\frac{1}{T_{12}}+\frac{m_1|v-U_{12}|^2}{2T_{12}^2}\right)\mathcal{M}_{12}.
\end{align*}
(4) Similar to case (2), both $U_{12}$ and $T_{12}$ depend on $U_2$.
\begin{align*}
\frac{\partial \mathcal{M}_{12}}{\partial U_2} &= \frac{\partial U_{12}}{\partial U_2}\frac{\partial \mathcal{M}_{12}}{\partial U_{12}} + \frac{\partial T_{12}}{\partial U_2}\frac{\partial \mathcal{M}_{12}}{\partial T_{12}} \cr
&= (1-\delta)m_1\frac{v-U_{12}}{T_{12}}\mathcal{M}_{12} +2\gamma(U_2-U_1)\left(-\frac{3}{2}\frac{1}{T_{12}}+\frac{m_1|v-U_{12}|^2}{2T_{12}^2}\right)\mathcal{M}_{12}.
\end{align*}
(5) By an explicit computation, we have
\begin{align*}
\frac{\partial \mathcal{M}_{12}}{\partial T_2} = \frac{\partial T_{12}}{\partial T_2}\frac{\partial \mathcal{M}_{12}}{\partial T_{12}} = (1-\omega) \left(-\frac{3}{2}\frac{1}{T_{12}}+\frac{m_1|v-U_{12}|^2}{2T_{12}^2}\right)\mathcal{M}_{12}.
\end{align*}
Substituting
\begin{align}\label{1020}
(n_{1\theta},U_{1\theta},T_{1\theta},U_{2\theta},T_{2\theta})\big|_{\theta=0}&= (n_{10},U_{10},T_{10},U_{20},T_{20}) = (n_{10},0,1,0,1),
\end{align}
and
\begin{align}\label{12210}
U_{12\theta}|_{\theta=0}=U_{21\theta}|_{\theta=0} = 0, \quad \quad 
T_{12\theta}|_{\theta=0}=T_{21\theta}|_{\theta=0} = 1,
\end{align}
on the above computations, we get the desired result. \newline
\end{proof}
Now we proceed with the proof of Proposition \ref{linearize} for $\mathcal{M}_{12}(F)$. By the definition of the transition of the macroscopic fields \eqref{transition} and the definition of the basis \eqref{basis}, we have 
\begin{align}\label{lin1}
\begin{split}
\frac{d (n_{k\theta}, n_{k\theta}U_{k\theta}, G_{k\theta})}{d \theta}
&= \left( \int_{\mathbb{R}^3} f_k\sqrt{\mu_k} dv, \int_{\mathbb{R}^3} f_kv\sqrt{\mu_k}dv, \int _{\mathbb{R}^3}f_k\frac{m_k|v|^2-3}{\sqrt{6}}\sqrt{\mu_k} dv \right) \cr
&= \left( \langle f_k,e_{k1} \rangle_{L^2_v} , \langle f_k,e_{k2} \rangle_{L^2_v}, \langle f_k,e_{k3} \rangle_{L^2_v}, \langle f_k,e_{k4} \rangle_{L^2_v} , \langle f_k,e_{k5} \rangle_{L^2_v} \right),
\end{split}
\end{align}
for $k=1,2$. For notational brevity, we define
\[J_{k\theta} = \frac{\partial(n_{k\theta},n_{k\theta}U_{k\theta},G_{k\theta})} {\partial(n_{k\theta},U_{k\theta},T_{k\theta})}.\]
Then applying Lemma \ref{Jaco} gives 
\begin{align*}
J^{-1}_{k\theta}\big|_{\theta=0} = diag\left(1,\frac{1}{n_{k0}},\frac{1}{n_{k0}},\frac{1}{n_{k0}},\sqrt{\frac{2}{3}}\frac{1}{n_{k0}}\right),
\end{align*}
and
\begin{align}\label{lin2}
\begin{split}
\left( \frac{\partial(n_{1\theta}, n_{1\theta}U_{1\theta}, G_{1\theta},n_{2\theta}, n_{2\theta}U_{2\theta}, G_{2\theta})} {\partial(n_{1\theta},U_{1\theta},T_{1\theta},n_{2\theta},U_{2\theta},T_{2\theta})} \right)^{-1}\bigg|_{\theta=0} = \left[ {\begin{array}{cccccc}
J^{-1}_{1\theta}\big|_{\theta=0} & 0 \\
			0 & J^{-1}_{2\theta}\big|_{\theta=0}
	\end{array} } \right] ,
\end{split}
\end{align}
where we used 
\begin{align}\label{block inv}
\begin{split}
\left[ {\begin{array}{cccccc}
		J_1 & 0 \\
		0 & J_2
\end{array} } \right]^{-1}= 
\left[ {\begin{array}{cccccc}
			J^{-1}_1 & 0 \\
			0 & J^{-1}_2
	\end{array} } \right].
\end{split}
\end{align}
We substitute \eqref{lin1}, \eqref{lin2} and Lemma \ref{M_12 diff} into \eqref{M'(0)} to obtain
\begin{align*}
\mathcal{M}_{12}'(0)&=\frac{\mu_1}{n_{10}}\int_{\mathbb{R}^3} f_1\sqrt{\mu_1} dv + \frac{\delta m_1 v \mu_1}{n_{10}}\int_{\mathbb{R}^3} f_1v\sqrt{\mu_1}dv  \cr
&\quad+ \omega \frac{m_1|v|^2-3}{2}\mu_1\sqrt{\frac{2}{3}}\frac{1}{n_{10}} \int _{\mathbb{R}^3}f_1\frac{m_1|v|^2-3}{\sqrt{6}}\sqrt{\mu_1} dv \cr
&\quad+ \frac{(1-\delta)m_1v\mu_1}{n_{20}}\int_{\mathbb{R}^3} f_2v\sqrt{\mu_2}dv \cr
&\quad+ (1-\omega)\frac{m_1|v|^2-3}{2}\mu_1\sqrt{\frac{2}{3}}\frac{1}{n_{20}}\int _{\mathbb{R}^3}f_2\frac{m_2|v|^2-3}{\sqrt{6}}\sqrt{\mu_2} dv.
\end{align*}
Using the definition of the basis in \eqref{basis}, we simplify it as follows:
\begin{multline}\label{lin M12}
\mathcal{M}_{12}'(0)=\langle f_1, e_{11} \rangle_{L^2_v}e_{11}\sqrt{\mu_1} + \delta  \sum_{2\leq i \leq 4}\langle f_1, e_{1i} \rangle_{L^2_v}e_{1i}\sqrt{\mu_1} + \omega  \langle f_1, e_{15} \rangle_{L^2_v}e_{15}\sqrt{\mu_1} \cr
+  (1-\delta)\sqrt{\frac{n_{10}}{n_{20}}}\sqrt{\frac{m_1}{m_2}}\sum_{2\leq i \leq 4}\langle f_2, e_{2i} \rangle_{L^2_v}e_{1i}\sqrt{\mu_1} + (1-\omega) \sqrt{\frac{n_{10}}{n_{20}}}\langle f_2, e_{25} \rangle_{L^2_v}e_{15}\sqrt{\mu_1}.
\end{multline}
Adding and subtracting the following term
\begin{align*}
(1-\delta) \sum_{2\leq i \leq 4}\langle f_1, e_{1i} \rangle_{L^2_v}e_{1i}\sqrt{\mu_1} + (1-\omega) \langle f_1, e_{15} \rangle_{L^2_v}e_{15}\sqrt{\mu_1},
\end{align*}
gives 
\begin{align*}
\mathcal{M}_{12}'(0)&= P_1f_1\sqrt{\mu_1} 
+ (1-\delta) \sum_{2\leq i \leq 4}  \left(\sqrt{\frac{n_{10}}{n_{20}}}\sqrt{\frac{m_1}{m_2}}\langle f_2, e_{2i} \rangle_{L^2_v}-\langle f_1, e_{1i} \rangle_{L^2_v}\right)e_{1i}\sqrt{\mu_1} \cr
&\quad+ (1-\omega) \left(\sqrt{\frac{n_{10}}{n_{20}}}\langle f_2, e_{25} \rangle_{L^2_v}-\langle f_1, e_{15} \rangle_{L^2_v}\right)e_{15}\sqrt{\mu_1}.
\end{align*}
This completes the proof for the linearization of $\mathcal{M}_{12}$.

\subsubsection{Linearization of $\mathcal{M}_{21}$} Now we consider the calculation of $\mathcal{M}_{21}$ in \eqref{M'(0)}.

\begin{lemma}\label{M_21 diff} We have 
\begin{align*}
&(1) ~ \frac{\partial \mathcal{M}_{21\theta}}{\partial n_{2\theta}}\bigg|_{\theta=0} =\frac{1}{n_{20}}\mu_2,  
&(2)~ \frac{\partial \mathcal{M}_{21\theta}}{\partial U_{2\theta}}\bigg|_{\theta=0} &=\left(1-\frac{m_1}{m_2}(1-\delta)\right) m_2 v \mu_2, \cr
&(3)~ \frac{\partial \mathcal{M}_{21\theta}}{\partial T_{2\theta}}\bigg|_{\theta=0} = \omega \frac{m_2|v|^2-3}{2}\mu_2 , 
&(4)~ \frac{\partial \mathcal{M}_{21\theta}}{\partial U_{1\theta}}\bigg|_{\theta=0}&= \frac{m_1}{m_2}(1-\delta)m_2 v\mu_2, \cr
&(5) ~ \frac{\partial \mathcal{M}_{21\theta}}{\partial T_{1\theta}}\bigg|_{\theta=0} = (1-\omega)\frac{m_2|v|^2-3}{2}\mu_2. &
\end{align*}
\end{lemma}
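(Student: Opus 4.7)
The plan is to mirror the computation in Lemma \ref{M_12 diff}, replacing $\mathcal{M}_{12}$ by $\mathcal{M}_{21}$ and tracking how the inter-species velocity and temperature
\[
U_{21\theta} = \tfrac{m_1}{m_2}(1-\delta)U_{1\theta} + \bigl(1-\tfrac{m_1}{m_2}(1-\delta)\bigr)U_{2\theta}, \qquad T_{21\theta} = (1-\omega)T_{1\theta} + \omega T_{2\theta} + C|U_{2\theta}-U_{1\theta}|^2,
\]
with $C = \tfrac{1}{3}m_1(1-\delta)\bigl(\tfrac{m_1}{m_2}(\delta-1)+1+\delta\bigr)-\gamma$, depend on $(U_1,U_2,T_1,T_2)$. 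The Gaussian $\mathcal{M}_{21}$ depends on $n_2$ directly through its prefactor and on $(U_1,U_2,T_1,T_2)$ only via $U_{21}$ and $T_{21}$, so I will apply the chain rule accordingly and then substitute \eqref{1020}--\eqref{12210}.

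Part (1) is immediate from $\partial_{n_2}\mathcal{M}_{21} = \mathcal{M}_{21}/n_2$, which at $\theta=0$ becomes $\mu_2/n_{20}$. For (2) and (4), I apply the chain rule
\[
\frac{\partial \mathcal{M}_{21}}{\partial U_{j}} = \frac{\partial U_{21}}{\partial U_j}\, m_2\frac{v-U_{21}}{T_{21}}\mathcal{M}_{21} + \frac{\partial T_{21}}{\partial U_j}\left(-\frac{3}{2T_{21}}+\frac{m_2|v-U_{21}|^2}{2T_{21}^2}\right)\mathcal{M}_{21}, \qquad j\in\{1,2\}.
\]
The velocity coefficients $\partial U_{21}/\partial U_j$ are read off directly from the definition of $U_{21}$, while $\partial T_{21}/\partial U_j = \pm 2C(U_{2\theta}-U_{1\theta})$, which \emph{vanishes} at $\theta=0$ since $U_{10}=U_{20}=0$. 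Thus only the first term survives at $\theta=0$, and (2) and (4) follow at once with the claimed coefficients $1-\tfrac{m_1}{m_2}(1-\delta)$ and $\tfrac{m_1}{m_2}(1-\delta)$, respectively.

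For (3) and (5), since $T_{21}$ depends linearly on $T_1,T_2$ with coefficients $1-\omega$ and $\omega$ and $\mathcal{M}_{21}$ depends on these variables only through $T_{21}$, the chain rule yields
\[
\frac{\partial \mathcal{M}_{21}}{\partial T_{k}} = c_k\left(-\frac{3}{2T_{21}}+\frac{m_2|v-U_{21}|^2}{2T_{21}^2}\right)\mathcal{M}_{21}, \qquad c_1 = 1-\omega,\ c_2=\omega,
\]
and setting $\theta=0$ produces $c_k\,(m_2|v|^2-3)\mu_2/2$. The computation is essentially routine; the one subtlety to keep in mind is the cancellation $(U_{2\theta}-U_{1\theta})|_{\theta=0}=0$, which is what kills the $\gamma$-dependent cross term and thereby produces the clean expressions in (2) and (4). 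No significant obstacle is expected beyond bookkeeping the asymmetric coefficients coming from the fact that $U_{21}$ and $T_{21}$ are weighted differently than $U_{12}$ and $T_{12}$.
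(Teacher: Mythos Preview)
Your proposal is correct and follows essentially the same approach as the paper: apply the chain rule through $U_{21}$ and $T_{21}$, compute the partial derivatives of $\mathcal{M}_{21}$ with respect to $U_{21}$ and $T_{21}$, and then substitute \eqref{1020}--\eqref{12210}, noting that the $|U_{2\theta}-U_{1\theta}|^2$ contribution to $T_{21}$ disappears at $\theta=0$. The only cosmetic difference is that the paper writes out each of the five derivatives separately rather than grouping (2),(4) and (3),(5) as you do.
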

\begin{proof}
(1) By an explicit computation, we have 
\begin{align*}
\frac{\partial \mathcal{M}_{21}}{\partial n_2} = \frac{1}{n_2}\mathcal{M}_{21}.
\end{align*}
(2) Note that $U_{21}$ and $T_{21}$ depend on $U_2$. The chain rule gives 
\begin{align*}
\frac{\partial \mathcal{M}_{21}}{\partial U_2} &= \frac{\partial U_{21}}{\partial U_2}\frac{\partial \mathcal{M}_{21}}{\partial U_{21}} + \frac{\partial T_{21}}{\partial U_2}\frac{\partial \mathcal{M}_{21}}{\partial T_{21}}.
\end{align*}
So we differentiate
\begin{align*}
\frac{\partial U_{21}}{\partial U_2}\frac{\partial \mathcal{M}_{21}}{\partial U_{21}}&= (1-\frac{m_1}{m_2}(1-\delta)) m_2\frac{v-U_{21}}{T_{21}}\mathcal{M}_{21},
\end{align*}
and
\begin{align*}
\frac{\partial T_{21}}{\partial U_2}\frac{\partial \mathcal{M}_{21}}{\partial T_{21}} 
&=2\left(\frac{1}{3}m_1(1-\delta)\left(\frac{m_1}{m_2}(\delta-1)+1+\delta\right)-\gamma\right)\cr
&\quad \times (U_2-U_1)\left(-\frac{3}{2}\frac{1}{T_{21}}+\frac{m_2|v-U_{21}|^2}{2T_{21}^2}\right)\mathcal{M}_{21}.
\end{align*}
(3) We have
\begin{align*}
\frac{\partial \mathcal{M}_{21}}{\partial T_2} = \frac{\partial T_{21}}{\partial T_2}\frac{\partial \mathcal{M}_{21}}{\partial T_{21}} = \omega \left(-\frac{3}{2}\frac{1}{T_{21}}+\frac{m_2|v-U_{21}|^2}{2T_{21}^2}\right)\mathcal{M}_{21}.
\end{align*}
(4) Since both $U_{21}$ and $T_{21}$ depend on $U_1$, 
\begin{align*}
\frac{\partial \mathcal{M}_{21}}{\partial U_1} &= \frac{\partial U_{21}}{\partial U_1}\frac{\partial \mathcal{M}_{21}}{\partial U_{21}} + \frac{\partial T_{21}}{\partial U_1}\frac{\partial \mathcal{M}_{21}}{\partial T_{21}},
\end{align*}
we compute
\begin{align*}
\frac{\partial U_{21}}{\partial U_1}\frac{\partial \mathcal{M}_{21}}{\partial U_{21}} &= \frac{m_1}{m_2}(1-\delta)m_2\frac{v-U_{21}}{T_{21}}\mathcal{M}_{21}, 
\end{align*}
and
\begin{align*}
\frac{\partial T_{21}}{\partial U_1}\frac{\partial \mathcal{M}_{21}}{\partial T_{21}} &= -2\left(\frac{1}{3}m_1(1-\delta)\left(\frac{m_1}{m_2}(\delta-1)+1+\delta\right)-\gamma\right)\cr
&\quad \times (U_2-U_1)\left(-\frac{3}{2}\frac{1}{T_{21}}+\frac{m_2|v-U_{21}|^2}{2T_{21}^2}\right)\mathcal{M}_{21}.
\end{align*}
(5) We have 
\begin{align*}
\frac{\partial \mathcal{M}_{21}}{\partial T_1} = \frac{\partial T_{21}}{\partial T_1}\frac{\partial \mathcal{M}_{21}}{\partial T_{21}} = (1-\omega) \left(-\frac{3}{2}\frac{1}{T_{21}}+\frac{m_2|v-U_{21}|^2}{2T_{21}^2}\right)\mathcal{M}_{21}.
\end{align*}
Similar to Lemma \ref{M_12 diff}, substituting \eqref{1020} and \eqref{12210} on the above calculations gives desired results.
\end{proof}

Substituting \eqref{lin1}, \eqref{lin2}, and Lemma \ref{M_21 diff} into \eqref{M'(0)} yields 
\begin{align*}
\mathcal{M}_{21}'(0)&=\frac{\mu_2}{n_{20}}\int_{\mathbb{R}^3} f_2\sqrt{\mu_2} dv + \frac{\left(1-\frac{m_1}{m_2}(1-\delta)\right) m_2 v \mu_2}{n_{20}}\int_{\mathbb{R}^3} f_2v\sqrt{\mu_2}dv \cr
&\quad+ \omega \frac{m_2|v|^2-3}{2}\mu_2 \sqrt{\frac{2}{3}}\frac{1}{n_{20}} \int _{\mathbb{R}^3}f_2\frac{m_2|v|^2-3}{\sqrt{6}}\sqrt{\mu_2} dv \cr
&\quad+ \frac{\frac{m_1}{m_2}(1-\delta)m_2 v\mu_2}{n_{10}}\int_{\mathbb{R}^3} f_1v\sqrt{\mu_1}dv \cr
&\quad+ (1-\omega)\frac{m_2|v|^2-3}{2}\mu_2\sqrt{\frac{2}{3}}\frac{1}{n_{10}}\int _{\mathbb{R}^3}f_1\frac{m_1|v|^2-3}{\sqrt{6}}\sqrt{\mu_1} dv .
\end{align*}
Using the notation of the basis in \eqref{basis}, it is equal to 
\begin{align}\label{lin M21}
\begin{split}
\mathcal{M}_{21}'(0)&=\langle f_2, e_{21} \rangle_{L^2_v}e_{21}\sqrt{\mu_2} \cr
&\quad+ \left(1-\frac{m_1}{m_2}(1-\delta)\right)  \sum_{2\leq i \leq 4}\langle f_2, e_{2i} \rangle_{L^2_v}e_{2i}\sqrt{\mu_2} + \omega  \langle f_2, e_{25} \rangle_{L^2_v}e_{25}\sqrt{\mu_2} \cr
&\quad+  \frac{m_1}{m_2}(1-\delta)\sqrt{\frac{n_{20}}{n_{10}}}\sqrt{\frac{m_2}{m_1}}\sum_{2\leq i \leq 4}\langle f_1, e_{1i} \rangle_{L^2_v}e_{2i}\sqrt{\mu_2}\cr
&\quad+ (1-\omega)\sqrt{\frac{n_{20}}{n_{10}}} \langle f_1, e_{15} \rangle_{L^2_v}e_{25}\sqrt{\mu_2}.
\end{split}
\end{align}
Adding and subtracting the following term
\begin{align*}
\frac{m_1}{m_2}(1-\delta)  \sum_{2\leq i \leq 4}\langle f_2, e_{2i} \rangle_{L^2_v}e_{2i}\sqrt{\mu_2} + (1-\omega)  \langle f_2, e_{25} \rangle_{L^2_v}e_{25}\sqrt{\mu_2},
\end{align*}
gives
\begin{align*}
\mathcal{M}_{21}'(0)&= P_2f_2\sqrt{\mu_2}  \cr
&\quad+  \frac{m_1}{m_2}(1-\delta)\sum_{2\leq i \leq 4}\left(\sqrt{\frac{n_{20}}{n_{10}}}\sqrt{\frac{m_2}{m_1}}\langle f_1, e_{1i} \rangle_{L^2_v}-\langle f_2, e_{2i} \rangle_{L^2_v}\right)e_{2i}\sqrt{\mu_2} \cr
&\quad+ (1-\omega)\left(\sqrt{\frac{n_{20}}{n_{10}}}\langle f_1, e_{15} \rangle_{L^2_v}-\langle f_2, e_{25} \rangle_{L^2_v}\right)e_{25}\sqrt{\mu_2}.
\end{align*}
This completes the proof for the linearization of $\mathcal{M}_{21}$.

\subsection{Linearization of the mixture BGK model}\label{linMBGK} In this part, we linearize the mixture BGK model \eqref{CCBGK}. Applying the linearization of the BGK Maxwellian Lemma \ref{lin ii} and Proposition \ref{linearize}, we substitute $F_1=\mu_1+\sqrt{\mu_1}f_1$ on $(\ref{CCBGK})_1$ and divide it by $\sqrt{\mu_1}$ to have
\begin{align*}
\partial_t f_1+v\cdot \nabla_xf_1&=n_1(P_1f_1-f_1 +\frac{1}{\sqrt{\mu_1}}\int_0^1\mathcal{M}_{11}''(\theta)(1-\theta)d\theta)\cr
&\quad+n_2(P_1f_1-f_1 +\frac{1}{\sqrt{\mu_1}}\int_0^1\mathcal{M}_{12}''(\theta)(1-\theta)d\theta) \cr
&\quad+n_2 \bigg[(1-\delta) \sum_{2\leq i \leq 4}  \left(\sqrt{\frac{n_{10}}{n_{20}}}\sqrt{\frac{m_1}{m_2}}\langle f_2, e_{2i} \rangle_{L^2_v}-\langle f_1, e_{1i} \rangle_{L^2_v}\right)e_{1i} \cr
&\quad+(1-\omega) \left(\sqrt{\frac{n_{10}}{n_{20}}}\langle f_2, e_{25} \rangle_{L^2_v}-\langle f_1, e_{15} \rangle_{L^2_v}\right)e_{15} \bigg].
\end{align*}
Splitting $n_k$ by $n_k=(n_k-n_{k0})+n_{k0}$,
\begin{align}\label{rho decomp}
\begin{split}
n_k &= n_k-n_{k0} +n_{k0} = \int_{\mathbb{R}^3}f_k\sqrt{\mu_k} dv +n_{k0} =\sqrt{n_{k0}}\langle f_k, e_{k1} \rangle_{L^2_v}+n_{k0},
\end{split}
\end{align}
we can have the following linearized equation:
\begin{align}\label{pertf1}
\partial_t f_1+v\cdot \nabla_xf_1&=L_{11}(f_1)+L_{12}(f_1,f_2)+\Gamma_{11}(f_1)+\Gamma_{12}(f_1,f_2),
\end{align}
where $L_{11}(f_1)=n_{10}(P_1f_1-f_1)$. The linear term $L_{12}$ is decomposed as $L_{12}=L_{12}^1+L_{12}^2$ with $L_{12}^1= n_{20}(P_1f_1-f_1)$. And $L_{12}^2$ denotes the linear term describing the interchange of momentum and temperature of each species as follows:
\begin{align}\label{def_AL1}
\begin{split}
L_{12}^2(f_1,f_2) &= n_{20} \bigg[(1-\delta) \sum_{2\leq i \leq 4}  \left(\sqrt{\frac{n_{10}}{n_{20}}}\sqrt{\frac{m_1}{m_2}}\langle f_2, e_{2i} \rangle_{L^2_v}-\langle f_1, e_{1i} \rangle_{L^2_v}\right)e_{1i} \cr
&\quad+(1-\omega) \left(\sqrt{\frac{n_{10}}{n_{20}}}\langle f_2, e_{25} \rangle_{L^2_v}-\langle f_1, e_{15} \rangle_{L^2_v}\right)e_{15} \bigg].
\end{split}
\end{align}
The nonlinear terms $\Gamma_{11}$ and $\Gamma_{12}$ denote
\begin{align*}
\Gamma_{11}(f_1) &= (n_1-n_{10})(P_1f_1-f_1) + n_1 \frac{1}{\sqrt{\mu_1}} \int_0^1\mathcal{M}_{11}''(\theta)(1-\theta)d\theta, \cr
\Gamma_{12}(f_1,f_2) &= (n_2-n_{20})(P_1f_1-f_1)+n_2 \frac{1}{\sqrt{\mu_1}} \int_0^1\mathcal{M}_{12}''(\theta)(1-\theta)d\theta \cr
&\quad+ (n_2-n_{20})\bigg[(1-\delta) \sum_{2\leq i \leq 4}  \left(\sqrt{\frac{n_{10}}{n_{20}}}\sqrt{\frac{m_1}{m_2}}\langle f_2, e_{2i} \rangle_{L^2_v}-\langle f_1, e_{1i} \rangle_{L^2_v}\right)e_{1i} \cr
&\quad+(1-\omega) \left(\sqrt{\frac{n_{10}}{n_{20}}}\langle f_2, e_{25} \rangle_{L^2_v}-\langle f_1, e_{15} \rangle_{L^2_v}\right)e_{15} \bigg].
\end{align*}
Similarly, we substitute $F_2=\mu_2+\sqrt{\mu_2}f_2$ on $(\ref{CCBGK})_2$ and divide it by $\sqrt{\mu_2}$ to have
\begin{align*}
\partial_t f_2+v\cdot \nabla_xf_2&=n_2(P_2f_2-f_2+\frac{1}{\sqrt{\mu_2}}\int_0^1\mathcal{M}_{22}''(\theta)(1-\theta)d\theta)\cr
&\quad +n_1(P_2f_2-f_2+\frac{1}{\sqrt{\mu_2}}\int_0^1\mathcal{M}_{21}''(\theta)(1-\theta)d\theta) \cr
&\quad+ n_1 \bigg[\frac{m_1}{m_2}(1-\delta)\sum_{2\leq i \leq 4}\left(\sqrt{\frac{n_{20}}{n_{10}}}\sqrt{\frac{m_2}{m_1}}\langle f_1, e_{1i} \rangle_{L^2_v}-\langle f_2, e_{2i} \rangle_{L^2_v}\right)e_{2i} \cr
&\quad+ (1-\omega)\left(\sqrt{\frac{n_{20}}{n_{10}}}\langle f_1, e_{15} \rangle_{L^2_v}-\langle f_2, e_{25} \rangle_{L^2_v}\right)e_{25} \bigg],
\end{align*}
which yields 
\begin{align}\label{pertf2}
\partial_t f_2+v\cdot \nabla_xf_2&=L_{22}(f_2)+L_{21}^2(f_1,f_2)+\Gamma_{22}(f_2)+\Gamma_{21}(f_1,f_2),
\end{align}
where $L_{22}(f_2)=n_{20}(P_2f_2-f_2)$. The linear term $L_{21}$ also decomposed as $L_{21}=L_{21}^1+L_{21}^2$ with $L_{21}^1=n_{10}(P_2f_2-f_2)$. And $L_{21}^2$ denotes the interchange of the momentum and temperature between other species.
\begin{align*}
L_{21}^2(f_1,f_2)  &=n_{10} \bigg[\frac{m_1}{m_2}(1-\delta)\sum_{2\leq i \leq 4}\left(\sqrt{\frac{n_{20}}{n_{10}}}\sqrt{\frac{m_2}{m_1}}\langle f_1, e_{1i} \rangle_{L^2_v}-\langle f_2, e_{2i} \rangle_{L^2_v}\right)e_{2i} \cr
&\quad+ (1-\omega)\left(\sqrt{\frac{n_{20}}{n_{10}}}\langle f_1, e_{15} \rangle_{L^2_v}-\langle f_2, e_{25} \rangle_{L^2_v}\right)e_{25} \bigg].
\end{align*}
The nonlinear terms $\Gamma_{22}$ and $\Gamma_{21}$ denote 
\begin{align*}
\Gamma_{22}(f_2) &= (n_2-n_{20})(P_2f_2-f_2) + n_2 \frac{1}{\sqrt{\mu_2}} \int_0^1\mathcal{M}_{22}''(\theta)(1-\theta)d\theta, \cr
\Gamma_{21}(f_1,f_2) &= (n_1-n_{10})(P_2f_2-f_2)+n_1 \frac{1}{\sqrt{\mu_2}} \int_0^1\mathcal{M}_{21}''(\theta)(1-\theta)d\theta \cr
&\quad+ (n_1-n_{10}) \bigg[\frac{m_1}{m_2}(1-\delta)\sum_{2\leq i \leq 4}\left(\sqrt{\frac{n_{20}}{n_{10}}}\sqrt{\frac{m_2}{m_1}}\langle f_1, e_{1i} \rangle_{L^2_v}-\langle f_2, e_{2i} \rangle_{L^2_v}\right)e_{2i} \cr
&\quad+ (1-\omega)\left(\sqrt{\frac{n_{20}}{n_{10}}}\langle f_1, e_{15} \rangle_{L^2_v}-\langle f_2, e_{25} \rangle_{L^2_v}\right)e_{25} \bigg].
\end{align*}
Overall, we can write the linearized mixture BGK model \eqref{CCBGK} as 
\begin{align}\label{linf}
\begin{split}
\partial_t f_1+v\cdot \nabla_xf_1&=L_{11}(f_1)+L_{12}(f_1,f_2)+\Gamma_{11}(f_1)+\Gamma_{12}(f_1,f_2), \cr
\partial_t f_2+v\cdot \nabla_xf_2&=L_{22}(f_2)+L_{21}(f_1,f_2)+\Gamma_{22}(f_2)+\Gamma_{21}(f_1,f_2), \cr
f_1(x,v,0)=f_{10}&(x,v), \qquad f_2(x,v,0)=f_{20}(x,v).
\end{split}
\end{align}
where $f_{10} =(F_{10}-\mu_1)/\sqrt{\mu_1}$, and $f_{20} = (F_{20}-\mu_2)/\sqrt{\mu_2}$. The linearized mixture BGK model \eqref{linf} satisfies the following conservation laws.
\begin{align}\label{conservf}
\begin{split}
&\int_{\mathbb{T}^3 \times \mathbb{R}^3}\sqrt{\mu_1}f_1(x,v,t) dvdx = \int_{\mathbb{T}^3 \times \mathbb{R}^3}\sqrt{\mu_2}f_2(x,v,t) dvdx =0, \cr
&\int_{\mathbb{T}^3 \times \mathbb{R}^3}\left(\sqrt{\mu_1}f_1(x,v,t)m_1v + \sqrt{\mu_2}f_2(x,v,t)m_2v\right) dvdx =0, \cr
&\int_{\mathbb{T}^3 \times \mathbb{R}^3}\left(\sqrt{\mu_1}f_1(x,v,t)m_1|v|^2 + \sqrt{\mu_2}f_2(x,v,t)m_2|v|^2\right) dvdx =0.
\end{split}
\end{align}

\section{Dissipative property of the linearized relaxation operator}
In this part, we investigate the dissipative property of the linearized multi-component relaxation operator. For simplicity of the notation, we denote the linear operator and the nonlinear perturbation as the vector forms:
\begin{align*}
L_1&=L_{11}(f_1)+L_{12}(f_1,f_2), \cr 
L_2&=L_{22}(f_2)+L_{21}(f_1,f_2),
\end{align*}
and
\begin{align*}
\Gamma_1&=\Gamma_{11}(f_1)+\Gamma_{12}(f_1,f_2), \cr 
\Gamma_2&=\Gamma_{22}(f_2)+\Gamma_{21}(f_1,f_2),
\end{align*}
then we can write \eqref{pertf1} and \eqref{pertf2} as
\begin{align}\label{pertff}
\begin{split}
(\partial_t +v\cdot \nabla_x)(f_1,f_2)&=L(f_1,f_2)+\Gamma(f_1,f_2),
\end{split}
\end{align} 
where $L(f_1,f_2)=(L_1,L_2)$ and $\Gamma(f_1,f_2)=(\Gamma_1,\Gamma_2)$.
We also define the following $6$-dimensional orthonormal basis:
\begin{align*}
\begin{split}
&E_1= \frac{1}{\sqrt{n_{10}}}(\sqrt{\mu_1},0), \quad E_2= \frac{1}{\sqrt{n_{20}}}(0,\sqrt{\mu_2}),\cr
&E_i= \frac{1}{\sqrt{m_1n_{10}+m_2n_{20}}}\left(m_1v_{i-2}\sqrt{\mu_1},m_2v_{i-2}\sqrt{\mu_2} \right),\quad (i=3,4,5), \cr
&E_6 =\frac{1}{\sqrt{6n_{10}+6n_{10}}}\left((m_1|v|^2-3)\sqrt{\mu_1},(m_2|v|^2-3)\sqrt{\mu_2}\right).
\end{split}
\end{align*}
We also denote $E_i=(E_i^1,E_i^2)$ for $i=1,\cdots,6$. The macroscopic projection operator for mixture can be written as
\begin{align*}
P(f_1,f_2) = \sum_{1\leq i \leq 6}\langle (f_1,f_2), E_i  \rangle_{L^2_v}E_i.
\end{align*}
The following is the main result of this section.
\begin{proposition}\label{dissipation} We have the following dissipation property for the linear operator $L$:
\begin{align*}
\langle L(f_1,f_2),(f_1,f_2)\rangle_{L^2_{x,v}}&\leq - (n_{10}+n_{20})\Big( \max\{\delta,\omega\}\| (I-P_1,I-P_2)(f_1,f_2) \|_{L^2_{x,v}}^2 \cr
&\quad + \min\left\{(1-\delta),(1-\omega) \right\}\| (I-P)(f_1,f_2)\|_{L^2_{x,v}}^2 \Big).
\end{align*}
\end{proposition}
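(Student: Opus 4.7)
The plan is to follow the decomposition $L = L^{(1)} + L^{(2)}$ already suggested in the introduction, where $L^{(1)}$ consists of the intra-species parts $L_{11}, L_{22}$ together with the mass-interaction parts $L^1_{12}, L^1_{21}$, and $L^{(2)}$ consists of the momentum-energy interaction parts $L^2_{12}, L^2_{21}$. I will compute the dissipation contribution of each block separately and combine them using the identity
\[
\|(I-P)(f_1,f_2)\|^2_{L^2_{x,v}} = \|(I-P_1,I-P_2)(f_1,f_2)\|^2_{L^2_{x,v}} + \|(P_1,P_2)(f_1,f_2)\|^2_{L^2_{x,v}} - \|P(f_1,f_2)\|^2_{L^2_{x,v}},
\]
which follows because the range of $P$ is contained in the range of $(P_1,P_2)$.

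First, since $P_k$ is an orthogonal projection on $L^2_v$ and $L_{11}+L^1_{12} = (n_{10}+n_{20})(P_1-I)$, $L_{22}+L^1_{21} = (n_{10}+n_{20})(P_2-I)$, a direct computation yields
\[
\langle L^{(1)}(f_1,f_2),(f_1,f_2)\rangle_{L^2_{x,v}} = -(n_{10}+n_{20})\|(I-P_1,I-P_2)(f_1,f_2)\|^2_{L^2_{x,v}},
\]
which is \eqref{LtoP0}. Next, I introduce the coordinates $a_i=\langle f_1,e_{1i}\rangle_{L^2_v}$ and $b_i=\langle f_2,e_{2i}\rangle_{L^2_v}$. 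Pairing $L^2_{12}$ with $f_1$ and $L^2_{21}$ with $f_2$, and summing, the momentum entries ($i=2,3,4$) and the energy entry ($i=5$) decouple. In each case the three terms combine into a negative perfect square; explicitly, I expect
\[
\langle L^2_{12},f_1\rangle_{L^2_{x,v}}+\langle L^2_{21},f_2\rangle_{L^2_{x,v}}
= -(1-\delta)\sum_{i=2}^{4}\Bigl\|\sqrt{n_{20}}\,a_i - \sqrt{n_{10}m_1/m_2}\,b_i\Bigr\|^2_{L^2_x} - (1-\omega)\bigl\|\sqrt{n_{20}}\,a_5-\sqrt{n_{10}}\,b_5\bigr\|^2_{L^2_x},
\]
where the cancellation hinges on the symmetry $n_{20}\sqrt{n_{10}/n_{20}}\sqrt{m_1/m_2}=n_{10}(m_1/m_2)\sqrt{n_{20}/n_{10}}\sqrt{m_2/m_1}$.

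The next step is to expand $\|(P_1,P_2)(f_1,f_2)\|^2 - \|P(f_1,f_2)\|^2$ in the same coordinates using the explicit $6$-dimensional basis $\{E_1,\dots,E_6\}$. The $E_1,E_2$ contributions cancel the $a_1^2+b_1^2$ part exactly, and straightforward algebra gives
\[
\|(P_1,P_2)(f_1,f_2)\|^2-\|P(f_1,f_2)\|^2 = \sum_{i=2}^{4}\frac{\|\sqrt{m_2n_{20}}\,a_i-\sqrt{m_1n_{10}}\,b_i\|^2_{L^2_x}}{m_1n_{10}+m_2n_{20}} + \frac{\|\sqrt{n_{20}}\,a_5-\sqrt{n_{10}}\,b_5\|^2_{L^2_x}}{n_{10}+n_{20}}.
\]
Comparing the two expressions coefficient by coefficient, the momentum ratio is $(1-\delta)(m_1n_{10}+m_2n_{20})/m_2$, which is bounded below by $(1-\delta)(n_{10}+n_{20})$ thanks to the standing assumption $m_1\geq m_2$; the energy ratio is exactly $(1-\omega)(n_{10}+n_{20})$. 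Taking the smaller of the two gives the inequality
\[
\langle L^{(2)}(f_1,f_2),(f_1,f_2)\rangle_{L^2_{x,v}} \leq -\min\{1-\delta,1-\omega\}(n_{10}+n_{20})\bigl(\|(P_1,P_2)(f_1,f_2)\|^2-\|P(f_1,f_2)\|^2\bigr),
\]
which is \eqref{bring}.

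Finally, I add the two estimates and rewrite the right-hand side via the identity above. Since $1-\min\{1-\delta,1-\omega\}=\max\{\delta,\omega\}$, the combination collapses to exactly the claimed bound. The only genuinely delicate point is the algebraic bookkeeping in the second step, verifying that the sign patterns in $L^2_{12}, L^2_{21}$ and the ratios of $n_{k0},m_k$ built into the inter-species coefficients conspire to yield perfect squares with the right weights; the comparison in the third step is where the hypothesis $m_1\geq m_2$ is crucially used, and everything else is bookkeeping.
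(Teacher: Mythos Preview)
Your proposal is correct and follows essentially the same approach as the paper: both proofs decompose $L$ into the block $(L_{11}+L^1_{12},L_{22}+L^1_{21})$ giving the $(I-P_1,I-P_2)$ dissipation exactly, then express $\langle L^2_{12},f_1\rangle+\langle L^2_{21},f_2\rangle$ and $\|(P_1,P_2)(f_1,f_2)\|^2-\|P(f_1,f_2)\|^2$ as explicit perfect squares, compare them using $m_1\geq m_2$, and combine via the identity $1=\max\{\delta,\omega\}+\min\{1-\delta,1-\omega\}$. The only cosmetic difference is that you work directly in the coordinates $a_i=\langle f_1,e_{1i}\rangle$, $b_i=\langle f_2,e_{2i}\rangle$, whereas the paper writes the same quantities in terms of the raw moments $\int f_k v\sqrt{\mu_k}\,dv$ and $\int f_k(m_k|v|^2-3)\sqrt{\mu_k}\,dv$.
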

\begin{proof}
By an explicit computation, we have 
\begin{align}\label{LtoP}
\begin{split}
\langle L(f_1,f_2) &,(f_1,f_2)\rangle_{L^2_{x,v}}= 
\langle L_1f_1 , f_1\rangle_{L^2_{x,v}}+\langle L_2f_2 , f_2\rangle_{L^2_{x,v}} \cr
&= -(n_{10}+n_{20})\| (I-P_1,I-P_2)(f_1,f_2)\|_{L^2_{x,v}} +\langle L_{12}^2,f_1 \rangle_{L^2_{x,v}}+\langle L_{21}^2,f_2 \rangle_{L^2_{x,v}}.
\end{split}
\end{align}
We decompose the proof in the following 4-steps.\\
({\bf Step 1:}) We consider the dissipation from the momentum and temperature interchange part of the inter-species linearized relaxation operator. We claim that 
\begin{align*}
\langle L_{12}^2,f_1 \rangle_{L^2_v}+\langle L_{21}^2,f_2 \rangle_{L^2_v} \leq 0,
\end{align*}
and the equality holds if and only if 
\begin{align*}
\frac{1}{n_{10}}\int_{\mathbb{R}^3} f_1v\sqrt{\mu_1}dv=\frac{1}{n_{20}}\int_{\mathbb{R}^3} f_2v\sqrt{\mu_2}dv,
\end{align*}
and
\begin{align*}
\frac{1}{n_{10}}\int_{\mathbb{R}^3} f_1(m_1|v|^2-3)\sqrt{\mu_1}dv=\frac{1}{n_{20}}\int_{\mathbb{R}^3} f_2(m_2|v|^2-3)\sqrt{\mu_2}dv.
\end{align*}
\noindent$\bullet$ Proof of the claim:
By the definition of $L_{12}^2$ in \eqref{def_AL1}, we have
\begin{align*}
\langle  L_{12}^2, f_1 \rangle_{L^2_v}&=  (1-\delta) \sum_{2\leq i \leq 4}  \left(\sqrt{\frac{n_{10}}{n_{20}}}\sqrt{\frac{m_1}{m_2}}\langle f_2, e_{2i} \rangle_{L^2_v}-\langle f_1, e_{1i} \rangle_{L^2_v}\right)\langle f_1, e_{1i} \rangle_{L^2_v}n_{20} \cr
&\quad+ (1-\omega) \left(\sqrt{\frac{n_{10}}{n_{20}}}\langle  f_2, e_{25} \rangle_{L^2_v}-\langle  f_1, e_{15} \rangle_{L^2_v}\right)\langle  f_1, e_{15} \rangle_{L^2_v} n_{20} \cr
&= I_1 + I_2.
\end{align*}
Similarly,
\begin{align*}
\langle  L_{21}^2,  f_2 \rangle_{L^2_v} &= \frac{m_1}{m_2}(1-\delta)\sum_{2\leq i \leq 4}\left(\sqrt{\frac{n_{20}}{n_{10}}}\sqrt{\frac{m_2}{m_1}}\langle  f_1, e_{1i} \rangle_{L^2_v}-\langle f_2, e_{2i} \rangle_{L^2_v}\right)\langle  f_2, e_{2i} \rangle_{L^2_v}n_{10} \cr
&\quad + \frac{m_1}{m_2}(1-\omega)\left(\sqrt{\frac{n_{20}}{n_{10}}}\langle  f_1, e_{15} \rangle_{L^2_v}-\langle  f_2, e_{25} \rangle_{L^2_v}\right)\langle  f_2, e_{25} \rangle_{L^2_v}n_{10} \cr
&= I_3 + I_4.
\end{align*}
By an explicit computation, we have 
\begin{align}\label{1+3}
\begin{split}
I_1+I_3 
&= -(1-\delta) n_{20} \sum_{2\leq i \leq 4}  \left(\sqrt{\frac{n_{10}}{n_{20}}}\sqrt{\frac{m_1}{m_2}}\langle  f_2, e_{2i} \rangle_{L^2_v}-\langle  f_1, e_{1i} \rangle_{L^2_v}\right)^2 \cr
&= -(1-\delta) m_1n_{10}n_{20} \left(\frac{1}{n_{20}}\int_{\mathbb{R}^3} f_2v\sqrt{\mu_2}dv-\frac{1}{n_{10}}\int_{\mathbb{R}^3} f_1v\sqrt{\mu_1}dv \right)^2 \leq 0 ,
\end{split}
\end{align}
and 
\begin{align}\label{2+4}
\begin{split}
I_2&+I_4 = -(1-\omega)n_{20} \left(\sqrt{\frac{n_{10}}{n_{20}}}\langle  f_2, e_{25} \rangle_{L^2_v}-\langle  f_1, e_{15} \rangle_{L^2_v}\right)^2 \cr
&=-(1-\omega)\frac{n_{10}n_{20}}{6} \left(\frac{1}{n_{20}}\int_{\mathbb{R}^3} f_2(m_2|v|^2-3)\sqrt{\mu_2}dv-\frac{1}{n_{10}}\int_{\mathbb{R}^3} f_1(m_1|v|^2-3)\sqrt{\mu_1}dv\right)^2 \cr
&\leq 0 .
\end{split}
\end{align}
which proves the claim of this step. \newline
({\bf Step 2:}) To estimate the gap of the macroscopic projection $(P_1,P_2)$ with $P$, we compute the following term:
\begin{align*}
\|(P_1,P_2)(f_1,f_2)-P(f_1,f_2) \|_{L^2_{x,v}}^2.
\end{align*}
We note that the element of $(P_1,P_2)(f_1,f_2)$ can be written as the linear combination of the following $10$-dimensional basis:
\begin{align*}
\{(\sqrt{\mu_1},0),(0,\sqrt{\mu_2}),(v\sqrt{\mu_1},0),(0,v\sqrt{\mu_2}), \left(|v|^2\sqrt{\mu_1},0\right),\left(0,|v|^2\sqrt{\mu_2}\right)\}
\end{align*}
so that $(P_1,P_2)P = P$. Therefore,
\begin{align*}
\|(P_1,P_2)(f_1,f_2)-P(f_1,f_2) \|_{L^2_{x,v}}^2 = \|(P_1,P_2)(f_1,f_2)\|_{L^2_{x,v}}^2-\|P(f_1,f_2) \|_{L^2_{x,v}}^2.
\end{align*}
Since we have 
\begin{align*}
\int_{\mathbb{R}^3}|P_kf_k|^2 dv &= \frac{1}{n_{k0}}\left(\int_{\mathbb{R}^3}f_k\sqrt{\mu_k} dv\right)^2 + \frac{m_k}{n_{k0}}\left(\int_{\mathbb{R}^3}f_kv\sqrt{\mu_k} dv\right)^2 \cr
& \quad + \frac{1}{6n_{k0}}\left(\int_{\mathbb{R}^3}f_k(m_k|v|^2-3)\sqrt{\mu_k} dv\right)^2,
\end{align*}
and 
\begin{align*}
\int_{\mathbb{R}^3}|P(f_1,f_2)|^2 dv &= \frac{1}{n_{10}}\left(\int_{\mathbb{R}^3}f_1\sqrt{\mu_1} dv\right)^2 + \frac{1}{n_{20}}\left(\int_{\mathbb{R}^3}f_2\sqrt{\mu_2} dv\right)^2 \cr
&\quad + \frac{1}{m_1n_{10}+m_2n_{20}}\left(\int_{\mathbb{R}^3}f_1m_1v\sqrt{\mu_1} dv+\int_{\mathbb{R}^3}f_2m_2v\sqrt{\mu_2} dv\right)^2 \cr
&\quad + \frac{1}{6n_{10}+6n_{20}}\left(\int_{\mathbb{R}^3} f(m_1|v|^2-3)\sqrt{\mu_1} dv+\int_{\mathbb{R}^3} f(m_2|v|^2-3)\sqrt{\mu_2} dv\right)^2,
\end{align*}
which follows directly from explicit computations, we can write  
\begin{align*}
\|(P_1f_1,P_2f_2)\|_{L^2_{x,v}}^2-\|P(f_1,f_2) \|_{L^2_{x,v}}^2= II_1+II_2,
\end{align*}
where
\begin{align}\label{II_1}
\begin{split}
II_1 &= \frac{1}{m_1n_{10}}\left(\int_{\mathbb{R}^3}f_1m_1v\sqrt{\mu_1} dv\right)^2+\frac{1}{m_2n_{20}}\left(\int_{\mathbb{R}^3}f_2m_2v\sqrt{\mu_2} dv\right)^2 \cr
&\quad -\frac{1}{m_1n_{10}+m_2n_{20}}\left(\left(\int_{\mathbb{R}^3}f_1m_1v\sqrt{\mu_1} dv+\int_{\mathbb{R}^3}f_2m_2v\sqrt{\mu_2} dv\right)^2\right)\\
&=\frac{1}{m_1n_{10}+m_2n_{20}} \left[ \sqrt{\frac{m_2n_{20}}{m_1n_{10}}}\int_{\mathbb{R}^3}f_1m_1v\sqrt{\mu_1} dv-\sqrt{\frac{m_1n_{10}}{m_2n_{20}}}\int_{\mathbb{R}^3}f_2m_2v\sqrt{\mu_2} dv \right]^2
\end{split}
\end{align}
and
\begin{align}\label{II_2}
\begin{split}
II_2&= \frac{1}{6n_{10}}\left(\int_{\mathbb{R}^3}f_1(m_1|v|^2-3)\sqrt{\mu_1} dv\right)^2+\frac{1}{6n_{20}}\left(\int_{\mathbb{R}^3}f_2(m_2|v|^2-3)\sqrt{\mu_2} dv\right)^2 \cr
&\quad - \frac{1}{6n_{10}+6n_{20}}\left(\left(\int_{\mathbb{R}^3} f(m_1|v|^2-3)\sqrt{\mu_1} dv+\int_{\mathbb{R}^3} f(m_2|v|^2-3)\sqrt{\mu_2} dv\right)^2\right)\\
&= \frac{1}{6n_{10}+6n_{20}}\left[\sqrt{\frac{n_{20}}{n_{10}}} \int_{\mathbb{R}^3}f_1(m_1|v|^2-3)\sqrt{\mu_1} dv-\sqrt{\frac{n_{10}}{n_{20}}}\int_{\mathbb{R}^3}f_2(m_2|v|^2-3)\sqrt{\mu_2} dv\right]^2.
\end{split}
\end{align}

\noindent({\bf Step 3:}) In this step, we compare $\langle L_{12}^2,f_1 \rangle_{L^2_{x,v}}+\langle L_{21}^2,f_2 \rangle_{L^2_{x,v}}$ with $\|(P_1,P_2)(f_1,f_2)-P(f_1,f_2) \|_{L^2_{x,v}}^2$ computed in (Step 1) and (Step 2), respectively. We claim that
\begin{align}\label{claimP}
	\begin{split}
		\langle L_{12}^2,f_1 \rangle_{L^2_{x,v}}+\langle L_{21}^2,f_2 \rangle_{L^2_{x,v}} &\leq -\min\left\{(1-\delta),(1-\omega) \right\}	(n_{10}+n_{20}) \cr
		&\quad \times\left( \|(P_1,P_2)(f_1,f_2)\|_{L^2_{x,v}}^2-\|P(f_1,f_2) \|_{L^2_{x,v}}^2\right).
	\end{split}
\end{align}
which is equivalent to 
\begin{align}\label{equivalent}
(n_{10}+n_{20})\left(II_1+II_2\right)  \leq -\max\left\{\frac{1}{1-\delta},\frac{1}{1-\omega} \right\} \left[(I_1+I_3)+(I_2+I_4)\right],
\end{align}
where $I_i~(i=1,2,3,4)$ are defined in Step 1, and $II_i~(i=1,2)$ are defined in \eqref{II_1} and \eqref{II_2}.
We first compare $II_2$ with $I_2+I_4$. Multiplying $(n_{10}+n_{20})$ on \eqref{II_2} yields 
\begin{align*}
(n_{10}+n_{20})II_2= \frac{1}{6}\left[\sqrt{\frac{n_{20}}{n_{10}}} \int_{\mathbb{R}^3}f_1(m_1|v|^2-3)\sqrt{\mu_1} dv-\sqrt{\frac{n_{10}}{n_{20}}}\int_{\mathbb{R}^3}f_2(m_2|v|^2-3)\sqrt{\mu_2} dv\right]^2,
\end{align*}
which is equal to $-\frac{1}{1-\omega}(I_2+I_4)$ by \eqref{2+4}:
\begin{align}\label{es1}
(n_{10}+n_{20})II_2 = -\frac{1}{1-\omega}(I_2+I_4).
\end{align}
Secondly, we compare $II_1$ with $I_1+I_3$. We multiply $(n_{10}+n_{20})$ on \eqref{II_1}:
\begin{align*}
(n_{10}+n_{20})II_1 &=\frac{n_{10}+n_{20}}{m_1n_{10}+m_2n_{20}} \left[ \sqrt{\frac{m_2n_{20}}{m_1n_{10}}}\int_{\mathbb{R}^3}f_1m_1v\sqrt{\mu_1} dv-\sqrt{\frac{m_1n_{10}}{m_2n_{20}}}\int_{\mathbb{R}^3}f_2m_2v\sqrt{\mu_2} dv \right]^2 \cr
&\leq \frac{1}{m_2}\left[ \sqrt{\frac{m_2n_{20}}{m_1n_{10}}}\int_{\mathbb{R}^3}f_1m_1v\sqrt{\mu_1} dv-\sqrt{\frac{m_1n_{10}}{m_2n_{20}}}\int_{\mathbb{R}^3}f_2m_2v\sqrt{\mu_2} dv \right]^2
\end{align*}
where we used the assumption $m_1\geq m_2$. From \eqref{1+3}, we compute
\begin{align*}
-m_2(I_1+I_3) &= (1-\delta) m_1m_2n_{10}n_{20} \left(\frac{1}{n_{20}}\int_{\mathbb{R}^3} f_2v\sqrt{\mu_2}dv-\frac{1}{n_{10}}\int_{\mathbb{R}^3} f_1v\sqrt{\mu_1}dv \right)^2 
\end{align*}
which means that
\begin{align}\label{es2}
(n_{10}+n_{20})II_1 \leq -\frac{1}{1-\delta}(I_1+I_3).
\end{align}
Combining the estimates \eqref{es1} and \eqref{es2} yields the desired estimate \eqref{equivalent}.\\
({\bf Step 4:}) Finally, we go back to the estimate \eqref{LtoP}. Applying \eqref{claimP} on \eqref{LtoP} yields 
\begin{multline*}
\langle L(f_1,f_2),(f_1,f_2)\rangle_{L^2_{x,v}}\leq (n_{10}+n_{20})\left(\|(P_1,P_2)(f_1,f_2)\|_{L^2_{x,v}}^2-\|(f_1,f_2)\|_{L^2_{x,v}}^2\right) \cr
- \min\left\{(1-\delta),(1-\omega) \right\}(n_{10}+n_{20})\left( \|(P_1,P_2)(f_1,f_2)\|_{L^2_{x,v}}^2-\|P(f_1,f_2) \|_{L^2_{x,v}}^2\right).
\end{multline*}
So that,
\begin{align*}
\frac{\langle L(f_1,f_2),(f_1,f_2)\rangle_{L^2_{x,v}}}{n_{10}+n_{20}}&\leq -\|(f_1,f_2)\|_{L^2_{x,v}}^2 + \max\{\delta,\omega\}\|(P_1,P_2)(f_1,f_2)\|_{L^2_{x,v}}^2 \cr
& \quad +\min\left\{(1-\delta),(1-\omega) \right\}\|P(f_1,f_2) \|_{L^2_{x,v}}^2.
\end{align*}
Finally, by splitting $1=\max\{\delta,\omega\}+\min\left\{(1-\delta),(1-\omega) \right\}$ on the coefficient of $\|(f_1,f_2)\|_{L^2}^2$, we conclude that
\begin{align*}
\langle L(f_1,f_2),(f_1,f_2)\rangle_{L^2_{x,v}}&\leq - (n_{10}+n_{20})\Big( \max\{\delta,\omega\}\| (I-P_1,I-P_2)(f_1,f_2) \|_{L^2_{x,v}}^2 \cr
&\quad + \min\left\{(1-\delta),(1-\omega) \right\}\| (I-P)(f_1,f_2)\|_{L^2_{x,v}}^2 \Big).
\end{align*}
\end{proof}

\begin{lemma} The kernel of the linear operator $L$ satisfies
\begin{align*}
Ker L 
&= span\{(\sqrt{\mu_1},0),(0,\sqrt{\mu_2}),\cr
&\quad (m_1v\sqrt{\mu_1},m_2v\sqrt{\mu_2}),\left((m_1|v|^2-3)\sqrt{\mu_1},(m_2|v|^2-3)\sqrt{\mu_2}\right)\}.
\end{align*}
\end{lemma}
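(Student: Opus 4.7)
The plan is to prove the two inclusions separately, with most of the work in $\ker L \subseteq \mathrm{span}\{\cdots\}$ delegated to the dissipation estimate of Proposition \ref{dissipation}.

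For the inclusion $\supseteq$, I would directly verify that each of the six candidate vectors is annihilated by $L$. For $(\sqrt{\mu_1},0)$, the function $\sqrt{\mu_1}$ is proportional to $e_{11}$, so $P_1\sqrt{\mu_1}=\sqrt{\mu_1}$ and the mass pieces of $L_{11}$ and $L_{12}^1$ vanish; moreover $\langle \sqrt{\mu_1},e_{1i}\rangle_{L^2_v}=0$ for $i\geq 2$ and the second component is zero, so $L_{12}^2$ and $L_{21}^2$ also vanish. The vector $(0,\sqrt{\mu_2})$ is symmetric. For the momentum vector $(m_1v\sqrt{\mu_1},m_2v\sqrt{\mu_2})$, the Gaussian moment identity $\int v_jv_{i-1}\mu_k\,dv=(n_{k0}/m_k)\delta_{j,i-1}$ gives $\langle m_kv_j\sqrt{\mu_k},e_{ki}\rangle_{L^2_v}=\sqrt{m_kn_{k0}}\,\delta_{j,i-1}$, so $P_k$ fixes each component, and the coefficient identity $\sqrt{n_{10}/n_{20}}\sqrt{m_1/m_2}\sqrt{m_2n_{20}}=\sqrt{m_1n_{10}}$ makes the $(1-\delta)$-bracket in $L_{12}^2$ vanish, with the matching cancellation in the $\frac{m_1}{m_2}(1-\delta)$-bracket of $L_{21}^2$. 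The thermal vector $((m_1|v|^2-3)\sqrt{\mu_1},(m_2|v|^2-3)\sqrt{\mu_2})$ is handled identically, using $\int(m_k|v|^2-3)^2\mu_k\,dv=6n_{k0}$ (so each component is proportional to $e_{k5}$ and fixed by $P_k$) and the identity $\sqrt{n_{10}/n_{20}}\cdot\sqrt{6n_{20}}=\sqrt{6n_{10}}$ producing cancellation in the $(1-\omega)$-bracket.

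For the inclusion $\subseteq$, I would invoke Proposition \ref{dissipation}. Suppose $L(f_1,f_2)=0$. Pairing with $(f_1,f_2)$ in $L^2_{x,v}$ yields $\langle L(f_1,f_2),(f_1,f_2)\rangle_{L^2_{x,v}}=0$. The admissibility constraints on the parameters give $\delta<1$ and $\omega<1$, so $c:=(n_{10}+n_{20})\min\{1-\delta,1-\omega\}>0$. Dropping the nonpositive $(I-P_1,I-P_2)$ term in Proposition \ref{dissipation} yields
\[
0 \leq c\,\|(I-P)(f_1,f_2)\|_{L^2_{x,v}}^2 \leq -\langle L(f_1,f_2),(f_1,f_2)\rangle_{L^2_{x,v}} = 0,
\]
which forces $(f_1,f_2)=P(f_1,f_2)$, i.e.\ $(f_1,f_2)$ is a linear combination (with coefficients depending on $x$) of $E_1,\ldots,E_6$, which are exactly normalizations of the six stated basis vectors.

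The main obstacle is the bookkeeping in the first inclusion: tracking the normalizations of the two orthonormal bases $\{e_{ki}\}$ and $\{E_i\}$ through the formulas for $L_{ij}^1$ and $L_{ij}^2$ and verifying that the cross-species coefficients $\sqrt{n_{10}/n_{20}}$, $\sqrt{m_1/m_2}$ conspire to give exact cancellation on the bulk-velocity and thermal basis vectors. The reverse inclusion is essentially immediate from Proposition \ref{dissipation} once one notes that the parameter constraints keep $1-\delta$ and $1-\omega$ strictly positive.
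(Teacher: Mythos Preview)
Your proof is correct and follows essentially the same approach as the paper. Both arguments establish the two inclusions by (i) directly verifying that $L$ annihilates the six basis vectors---the paper packages this as showing $L(P(f_1,f_2))=0$ via the computation $A_1=A_2=0$, which is exactly your coefficient-cancellation check on the momentum and thermal vectors---and (ii) invoking the dissipation estimate of Proposition \ref{dissipation} together with $\delta,\omega<1$ to force $(I-P)(f_1,f_2)=0$ whenever $L(f_1,f_2)=0$.
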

\begin{proof}
We prove the following equivalence condition.
\begin{align*}
\langle L(f_1,f_2),(f_1,f_2)\rangle_{L^2_{x,v}}=0 \qquad \Leftrightarrow  \qquad L(f_1,f_2)=0.
\end{align*}
($\Leftarrow$) This is trivial.\newline
($\Rightarrow$) By Proposition \ref{dissipation}, $\langle L(f_1,f_2),(f_1,f_2)\rangle_{L^2_{x,v}}=0$ implies $(f_1,f_2)=P(f_1,f_2)$. Now it is enough to show that $L(P(f_1,f_2))=0$. 
By direct computation,
\begin{align*}
L(P(f_1,f_2)) &= (n_{10}+n_{20})((P_1,P_2)(P(f_1,f_2))-P(f_1,f_2))\cr
&\quad +(L_{12}^2(Pf)+L_{21}^2(Pf)).
\end{align*}
The first term is equal to $0$ since $(P_1,P_2)P = P$. From (Step 1) of Proposition \ref{dissipation}, we can observe that $A_1=A_2=0$ implies $L_{12}^2=L_{21}^2=0$ where
\begin{align*}
A_1&= \frac{1}{n_{10}}\int_{\mathbb{R}^3} f_1v\sqrt{\mu_1}dv-\frac{1}{n_{20}}\int_{\mathbb{R}^3} f_2v\sqrt{\mu_2}dv,\cr
A_2&= \frac{1}{n_{10}}\int_{\mathbb{R}^3} f_1(m_1|v|^2-3)\sqrt{\mu_1}dv-\frac{1}{n_{20}}\int_{\mathbb{R}^3} f_2(m_2|v|^2-3)\sqrt{\mu_2}dv.
\end{align*}
Thus we want to prove that $A_1=A_2=0$ when $(f_1,f_2)= P(f_1,f_2) = \sum_{1\leq k \leq 6}\langle (f_1,f_2), E_k  \rangle_{L^2_v}E_k$.
From the orthogonality of the basis $E_k^1$ with $v_i\sqrt{\mu_1}$, 
\begin{align*}
A_1 &= \frac{1}{n_{10}}\int_{\mathbb{R}^3} \sum_{1\leq k \leq 6}\left[\langle (f_1,f_2), E_k  \rangle_{L^2_v}E_k^1\right]v_i\sqrt{\mu_1}dv -\frac{1}{n_{20}}\int_{\mathbb{R}^3} \sum_{1\leq k \leq 6}\left[\langle (f_1,f_2), E_k  \rangle_{L^2_v}E_k^2\right]v_i\sqrt{\mu_2}dv \cr
&= \langle (f_1,f_2), E_{i+2}  \rangle_{L^2_v} \left(\frac{1}{n_{10}}\int_{\mathbb{R}^3}E_{i+2}^1v_i\sqrt{\mu_1}dv-\frac{1}{n_{20}}\int_{\mathbb{R}^3} E_{i+2}^2v_i\sqrt{\mu_2}dv\right),
\end{align*}
for $i=1,2,3$. By definition of $E_{i+2}$, we have
\begin{align*}
A_1 &=\frac{\langle (f_1,f_2), E_{i+2}  \rangle_{L^2_v}}{\sqrt{m_1n_{10}+m_2n_{20}}} \left( \frac{1}{n_{10}}\int_{\mathbb{R}^3}m_1v_i^2\mu_1dv-\frac{1}{n_{20}}\int_{\mathbb{R}^3} m_2v_i^2\mu_2dv\right) =0 .
\end{align*}
Similarly, we compute
\begin{align*}
A_2 &= \frac{1}{n_{10}}\int_{\mathbb{R}^3} \sum_{1\leq k \leq 6}\left[\langle (f_1,f_2), E_k  \rangle_{L^2_v}E_k^1\right](m_1|v|^2-3)\sqrt{\mu_1}dv\cr
&\quad -\frac{1}{n_{20}}\int_{\mathbb{R}^3} \sum_{1\leq k \leq 6}\left[\langle (f_1,f_2), E_k  \rangle_{L^2_v}E_k^2\right](m_2|v|^2-3)\sqrt{\mu_2}dv \cr
&= \frac{\langle (f_1,f_2), E_6  \rangle_{L^2_v}}{\sqrt{6n_{10}+6n_{10}}}\left(\frac{1}{n_{10}}\int_{\mathbb{R}^3} (m_1|v|^2-3)^2\mu_1dv-\frac{1}{n_{20}}\int_{\mathbb{R}^3} (m_2|v|^2-3)^2\mu_2dv \right) \cr
&=0 ,
\end{align*}
where we used
\begin{align*}
\int_{\mathbb{R}^3}(m_i|v|^2-3)^2\mu_i dv &= \int_{\mathbb{R}^3}(m_i^2|v|^4-6m_i|v|^2+9)\mu_i dv 
= 6n_{i0}.
\end{align*}
Thus, $L_{12}^2(Pf)=L_{21}^2(Pf)=0$. Therefore, we conclude that $L(P(f_1,f_2))=0$ and the kernel of $L$ is spanned by the basis of $P$. This completes the proof. 
\end{proof}
\begin{remark} Note that in the extreme cases $\delta=1$ or $\omega=1$, we have 
\begin{itemize}
\item For $\delta=1$ and $0\leq \omega <1$
\begin{align*}
Ker L 
&= span\{(\sqrt{\mu_1},0),(0,\sqrt{\mu_2}),(v\sqrt{\mu_1},0),(0,v\sqrt{\mu_2})\cr
&\quad \left((m_1|v|^2-3)\sqrt{\mu_1},(m_2|v|^2-3)\sqrt{\mu_2}\right)\}.
\end{align*}
\item For $0\leq \delta <1$ and $\omega=1$  
\begin{align*}
Ker L 
&= span\{(\sqrt{\mu_1},0),(0,\sqrt{\mu_2}),(m_1v\sqrt{\mu_1},m_2v\sqrt{\mu_2}),\cr
&\quad \left(|v|^2\sqrt{\mu_1},0\right),\left(0,|v|^2\sqrt{\mu_2}\right)\}.
\end{align*}
\item For $\delta=\omega=1$
\begin{align*}
Ker L 
&= span\{(\sqrt{\mu_1},0),(0,\sqrt{\mu_2}),(v\sqrt{\mu_1},0),(0,v\sqrt{\mu_2})\cr
&\quad \left(|v|^2\sqrt{\mu_1},0\right),\left(0,|v|^2\sqrt{\mu_2}\right)\}.
\end{align*}
\end{itemize}
However, since $\delta=1$ or $\omega=1$ corresponds respectively to the cases where no interchange of momentum or temperature occurs. We exclude the cases in the paper sequel.
\end{remark}

\section{Local existence} 
In this section, we prove the local-in-time existence of the mixture BGK model.  We start with estimates of the macroscopic fields.
\subsection{Estimate of the macroscopic fields}
\begin{lemma}\label{macro esti} For sufficiently small $\mathcal{E}(t)$, there exists a positive constant $C>0$, such that
\begin{align*}
&(1)~ |n_{k\theta}(x,t)-n_{k0}|\leq C\sqrt{\mathcal{E}(t)},\cr
&(2)~	|U_{ij\theta}(x,t)|\leq C\sqrt{\mathcal{E}(t)},	\cr
&(3)~	|T_{ij\theta}(x,t)-1| \leq C\sqrt{\mathcal{E}(t)},
\end{align*}
for $k=1,2$ and $(i,j)=(1,2)$ or $(2,1)$.
\end{lemma}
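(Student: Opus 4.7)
The plan is to express every macroscopic quantity explicitly in terms of the perturbation $f_k$ via the substitution $F_k = \mu_k + \sqrt{\mu_k}\,f_k$, apply Cauchy--Schwarz in $v$, and then upgrade to an $L^\infty_x$ estimate through the Sobolev embedding $H^2(\mathbb{T}^3)\hookrightarrow L^\infty(\mathbb{T}^3)$, which is available since $N\geq 3$. For the density, $n_k-n_{k0}=\int f_k\sqrt{\mu_k}\,dv$, so $|n_k(x,t)-n_{k0}|\leq C\|f_k(x,\cdot,t)\|_{L^2_v}$ and Sobolev embedding yields $\|n_k-n_{k0}\|_{L^\infty_x}\leq C\sqrt{\mathcal{E}(t)}$. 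Because $n_{k\theta}-n_{k0}=\theta(n_k-n_{k0})$, estimate (1) follows uniformly in $\theta\in[0,1]$.

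With (1) in hand, smallness of $\mathcal{E}(t)$ gives the crucial lower bound $n_{k\theta}\geq n_{k0}/2$, so $1/n_{k\theta}$ is uniformly bounded. For the bulk velocity, the identity $n_kU_k=\int f_k v\sqrt{\mu_k}\,dv$ (which uses $\int v\mu_k\,dv=0$) together with Cauchy--Schwarz and Sobolev embedding gives $\|n_kU_k\|_{L^\infty_x}\leq C\sqrt{\mathcal{E}(t)}$. Combined with the lower bound on $n_{k\theta}$, the relation $U_{k\theta}=\theta(n_kU_k)/n_{k\theta}$ produces $|U_{k\theta}|\leq C\sqrt{\mathcal{E}(t)}$, and since $U_{12\theta},U_{21\theta}$ are affine combinations of $U_{1\theta},U_{2\theta}$ by \eqref{12theta}, estimate (2) follows.

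For the temperature, a direct calculation using $\int m_k|v|^2\mu_k\,dv=3n_{k0}$ collapses the definition of $G_k$ into
\[
\sqrt{6}\,G_k=\int f_k(m_k|v|^2-3)\sqrt{\mu_k}\,dv,
\]
so the same Cauchy--Schwarz/Sobolev chain gives $\|G_k\|_{L^\infty_x}\leq C\sqrt{\mathcal{E}(t)}$, hence $\|G_{k\theta}\|_{L^\infty_x}\leq C\sqrt{\mathcal{E}(t)}$. Solving $\sqrt{6}\,G_{k\theta}=3n_{k\theta}(T_{k\theta}-1)+m_kn_{k\theta}|U_{k\theta}|^2$ for $T_{k\theta}-1$ and inserting the previous estimates together with the lower bound $n_{k\theta}\geq n_{k0}/2$ produces $|T_{k\theta}-1|\leq C(\sqrt{\mathcal{E}(t)}+\mathcal{E}(t))\leq C'\sqrt{\mathcal{E}(t)}$ for $\mathcal{E}(t)$ small. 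Finally, $T_{12\theta}-1$ and $T_{21\theta}-1$ are affine combinations of $T_{1\theta}-1,\,T_{2\theta}-1$ plus a multiple of $|U_{2\theta}-U_{1\theta}|^2=O(\mathcal{E}(t))$, so they inherit the $O(\sqrt{\mathcal{E}(t)})$ bound, yielding (3).

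The only genuinely delicate point is the division by $n_{k\theta}$ in the definitions of $U_{k\theta}$ and $T_{k\theta}$, which is what forces the smallness hypothesis on $\mathcal{E}(t)$; everything else reduces to routine application of Cauchy--Schwarz and Sobolev embedding. No uniform-in-$\theta$ issue arises because the transition $(n_{k\theta},U_{k\theta},T_{k\theta})$ is a convex interpolation between the equilibrium state $(n_{k0},0,1)$ and the current macroscopic state, so each quantity moves monotonically in $\theta$ within the controlled perturbation regime.
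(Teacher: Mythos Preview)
Your proof is correct and follows essentially the same route as the paper: reduce the mixture quantities $U_{ij\theta},T_{ij\theta}$ to affine combinations of the mono-species quantities $U_{k\theta},T_{k\theta}$ via \eqref{12theta}, and control the latter by $C\sqrt{\mathcal{E}(t)}$. The only difference is that the paper simply quotes the mono-species bounds $|n_{k\theta}-n_{k0}|,\,|U_{k\theta}|,\,|T_{k\theta}-1|\leq C\sqrt{\mathcal{E}(t)}$ from \cite{Yun1}, whereas you reproduce that argument in full (Cauchy--Schwarz in $v$, Sobolev embedding in $x$, and the lower bound $n_{k\theta}\geq n_{k0}/2$ to handle the divisions); your write-up is thus a self-contained version of the paper's proof.
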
 
\begin{proof}
We recall the estimates for the mono-species macroscopic fields in \cite{Yun1}:
\begin{align*}
 |n_{k\theta}(x,t)-n_{k0}|,\quad|U_{k\theta}(x,t)|,\quad|T_{k\theta}(x,t)-1| \leq C\sqrt{\mathcal{E}(t)}.
\end{align*}
Therefore, from the definition of $U_{12\theta}$, $U_{21\theta}$, $T_{12\theta}$, and $T_{21\theta}$ in \eqref{12theta}, we have 
\begin{align*}
|U_{12\theta}|&\leq\delta |U_{1\theta}| + (1-\delta)|U_{2\theta}| \leq C\sqrt{\mathcal{E}(t)}, \cr
|U_{21\theta}|&\leq\frac{m_1}{m_2}(1-\delta)|U_{1\theta}| + \left(1-\frac{m_1}{m_2}(1-\delta)\right)|U_{2\theta}| \leq C\sqrt{\mathcal{E}(t)}, \cr
|T_{12\theta}|&= \omega |T_{1\theta}| + (1-\omega)|T_{2\theta}| + \gamma |U_{2\theta}-U_{1\theta}|^2 \leq C\sqrt{\mathcal{E}(t)}+C\mathcal{E}(t), \cr
\end{align*}
and
\begin{align*}
|T_{21\theta}|&= (1-\omega) |T_{1\theta}| + \omega |T_{2\theta}| +\left(\frac{1}{3}m_1(1-\delta)\left(\frac{m_1}{m_2}(\delta-1)+1+\delta\right)-\gamma\right) |U_{2\theta}-U_{1\theta}|^2 \cr
& \leq C\sqrt{\mathcal{E}(t)}+C\mathcal{E}(t),
\end{align*}
for sufficiently small $\mathcal{E}(t)$.
\end{proof}

\begin{lemma}\label{macro diff} For $|\alpha|\geq 1$ and sufficiently small $\mathcal{E}(t)$, there exists a positive constant $C_{\alpha}>0$, such that
\begin{align*}
&(1)~ |\partial^{\alpha}n_{k\theta}(x,t)|\leq C_{\alpha}\| \partial^{\alpha}f_k \|_{L^2_v}, \cr
&(2)~ |\partial^{\alpha}U_{ij\theta}(x,t)|\leq C_{\alpha} \sum_{|\alpha_1|\leq|\alpha|}\| \partial^{\alpha_1}f_k \|_{L^2_v},	\cr
&(3)~ |\partial^{\alpha}T_{ij\theta}(x,t)| \leq C_{\alpha} \sum_{|\alpha_1|\leq|\alpha|}\| \partial^{\alpha_1}f_k \|_{L^2_v}+C_{\alpha}\sum_{|\alpha_1|\leq|\alpha|}\| \partial^{\alpha_1}(f_1,f_2) \|_{L^2_v}^2,
\end{align*}
for $k=1,2$ and $(i,j)=(1,2)$ or $(2,1)$.
\end{lemma}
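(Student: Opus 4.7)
The plan is to exploit the algebraic relations in \eqref{transition} that tie the transition macroscopic fields directly to $f_k$, combined with induction on $|\alpha|$ to handle the nonlinear dependence through division by $n_{k\theta}$. Throughout, Lemma \ref{macro esti} plays the crucial role of keeping $n_{k\theta}$ bounded uniformly away from zero once $\mathcal{E}(t)$ is small enough.

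For part (1), I would observe that $n_{k\theta} - n_{k0} = \theta(n_k - n_{k0}) = \theta \sqrt{n_{k0}}\,\langle f_k, e_{k1}\rangle_{L^2_v}$ by \eqref{rho decomp}. Since $|\alpha|\geq 1$ kills the constant $n_{k0}$, one has $\partial^{\alpha} n_{k\theta} = \theta\sqrt{n_{k0}}\,\langle \partial^{\alpha} f_k, e_{k1}\rangle_{L^2_v}$, and Cauchy--Schwarz yields (1) immediately.

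For part (2), I would first treat the mono-species field $U_{k\theta}$. From \eqref{transition} and the evenness of $\mu_k$, $n_{k\theta} U_{k\theta} = \theta n_k U_k = \theta \int v\sqrt{\mu_k} f_k\, dv$, so $\partial^{\alpha}(n_k U_k)$ is controlled in $L^2_v$ by $\|\partial^{\alpha} f_k\|_{L^2_v}$ through Cauchy--Schwarz. Writing $U_{k\theta} = (\theta n_k U_k)/n_{k\theta}$, one applies Leibniz:
\begin{align*}
\partial^{\alpha} U_{k\theta} = \sum_{\alpha_1+\alpha_2=\alpha} \binom{\alpha}{\alpha_1}\, \partial^{\alpha_1}(\theta n_k U_k)\; \partial^{\alpha_2}\!\left(\frac{1}{n_{k\theta}}\right).
\end{align*}
Derivatives of $1/n_{k\theta}$ are handled by induction: Faà di Bruno expresses $\partial^{\alpha_2}(1/n_{k\theta})$ as a polynomial in lower-order $\partial^{\beta} n_{k\theta}$ divided by powers of $n_{k\theta}$. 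The denominator is under control by Lemma \ref{macro esti}, and each $\partial^{\beta} n_{k\theta}$ with $|\beta|\leq|\alpha|$ is bounded by $\|\partial^{\beta} f_k\|_{L^2_v}$ from part (1). This yields the desired estimate for $U_{k\theta}$, and then $U_{12\theta}$ and $U_{21\theta}$ follow by their explicit linear formulas in $U_{1\theta}, U_{2\theta}$ from \eqref{12theta}.

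For part (3), I would start from the identity derived from $G_{k\theta} = \theta G_k$,
\begin{align*}
3 n_{k\theta}(T_{k\theta} - 1) = \sqrt{6}\,\theta G_k - m_k n_{k\theta}|U_{k\theta}|^2,
\end{align*}
where $\sqrt{6}\,G_k = 3(n_{k0}-n_k) + m_k\int\sqrt{\mu_k} f_k |v|^2 dv$ is linear in $f_k$. Dividing by $3 n_{k\theta}$ and differentiating via Leibniz, the first piece contributes terms linear in $\partial^{\alpha_1} f_k$ (producing the first sum in the estimate), while the second piece, being quadratic in $U_{k\theta}$, contributes products $(\partial^{\alpha_1}U_{k\theta})(\partial^{\alpha_2}U_{k\theta})$ which by part (2) are bounded by $\sum\|\partial^{\alpha_1}(f_1,f_2)\|_{L^2_v}^2$. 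Finally, $T_{12\theta}$ and $T_{21\theta}$ inherit the estimate from $T_{k\theta}$ and the fact that $|U_{2\theta}-U_{1\theta}|^2$ is already quadratic in derivatives of $f_k$.

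The main obstacle is the inductive bookkeeping when differentiating $1/n_{k\theta}$ in high order; the correct structure (linear for (1) and (2), linear plus quadratic for (3)) emerges only after carefully separating the contributions from $\partial^{\alpha}(\theta n_k U_k)$ (linear in $f_k$) from the $|U_{k\theta}|^2$ term in the temperature formula. Once the smallness of $\mathcal{E}(t)$ is used to bound $n_{k\theta}$ below and $U_{k\theta}$, $T_{k\theta}-1$ above, the induction closes cleanly.
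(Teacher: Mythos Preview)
Your proposal is correct and follows essentially the same route as the paper: both reduce the inter-species bounds to the mono-species ones and then read off the result from the explicit formulas \eqref{12theta}, with the quadratic contribution in (3) coming from Leibniz applied to $|U_{2\theta}-U_{1\theta}|^2$. The only difference is that the paper simply quotes the mono-species estimates $|\partial^{\alpha}n_{k\theta}|,\,|\partial^{\alpha}U_{k\theta}|,\,|\partial^{\alpha}T_{k\theta}|\le C_\alpha\sum_{|\alpha_1|\le|\alpha|}\|\partial^{\alpha_1}f_k\|_{L^2_v}$ from \cite{Yun1} (recorded as \eqref{preresult}), whereas you re-derive them from \eqref{transition} via the Leibniz/Fa\`a di Bruno bookkeeping you outline; your argument is therefore a more self-contained version of the same proof.
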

\begin{proof}
We recall \eqref{12theta} and use the following estimates from \cite{Yun1}:
\begin{align}\label{preresult}
\begin{split}
&|\partial^{\alpha}n_{k\theta}(x,t)|,~|\partial^{\alpha}U_{k\theta}(x,t)|,
~|\partial^{\alpha}T_{k\theta}(x,t)| \leq C_{\alpha} \sum_{|\alpha_1|\leq|\alpha|}\| \partial^{\alpha_1}f_k \|_{L^2_v}. \quad (k=1,2),
\end{split}
\end{align}
to get
\begin{align*}
|\partial^{\alpha}U_{12\theta}|&\leq\delta |\partial^{\alpha}U_{1\theta}| + (1-\delta)|\partial^{\alpha}U_{2\theta}| \leq C_{\alpha} \sum_{|\alpha_1|\leq|\alpha|}\|\partial^{\alpha_1}(f_1,f_2) \|_{L^2_v}, \cr
|\partial^{\alpha}U_{21\theta}|&\leq\frac{m_1}{m_2}(1-\delta)|\partial^{\alpha}U_{1\theta}| + \left(1-\frac{m_1}{m_2}(1-\delta)\right)|\partial^{\alpha}U_{2\theta}| \leq C_{\alpha} \sum_{|\alpha_1|\leq|\alpha|}\|\partial^{\alpha_1}(f_1,f_2) \|_{L^2_v},
\end{align*}
and
\begin{align*}
|\partial^{\alpha}T_{12\theta}|&= \omega |\partial^{\alpha}T_{1\theta}| + (1-\omega)|\partial^{\alpha}T_{2\theta}| + \gamma \partial^{\alpha}|U_{2\theta}-U_{1\theta}|^2, \cr
|\partial^{\alpha}T_{21\theta}|&= (1-\omega) |\partial^{\alpha}T_{1\theta}| + \omega |\partial^{\alpha}T_{2\theta}| \cr
&\quad +\left(\frac{1}{3}m_1(1-\delta)\left(\frac{m_1}{m_2}(\delta-1)+1+\delta\right)-\gamma\right) \partial^{\alpha}|U_{2\theta}-U_{1\theta}|^2.
\end{align*}
Then by Young's inequality and using $(\ref{preresult})_2$, we have 
\begin{align*}
\partial^{\alpha}|U_{2\theta}-U_{1\theta}|^2 &= \sum_{\alpha_1+\alpha_2=\alpha}2\partial^{\alpha_1}(U_{2\theta}-U_{1\theta})\cdot \partial^{\alpha_2}(U_{2\theta}-U_{1\theta}) \cr
&\leq C_{\alpha}\sum_{|\alpha_1|\leq|\alpha|}\|\partial^{\alpha_1}(f_1,f_2) \|_{L^2_v}^2,
\end{align*}
which gives desired result.
\end{proof}

\subsection{Estimate of the nonlinear term}
We now consider the estimates of nonlinear perturbation $\Gamma$.

\begin{lemma}\label{nonlin} There exist non-negative integer $\lambda$, $\nu$, $\xi$, and general polynomial $\mathcal{P}_{lm}$ satisfying
\begin{align*}
\{\nabla^2_{(H_{1\theta},H_{2\theta})}\mathcal{M}_{ij}(\theta)\}_{l,m} =\frac{\mathcal{P}_{lm}(n_{1\theta},n_{2\theta},U_{1\theta},U_{2\theta},T_{1\theta},T_{2\theta},v-U_{ij\theta})}{n_{1\theta}^{\lambda}n_{2\theta}^{\nu}T_{ij\theta}^{\xi}}\mathcal{M}_{ij}(\theta),
\end{align*}
where $\mathcal{P}_{lm}(x_1,\cdots,x_n) = \sum_k a_k x_1^{k_1}\cdots x_n^{k_n}$ and the indices $k_1,\cdots,k_n$ are non-negative integer, $ij=12$ or $ij=21$, and $1\leq l,m \leq 10$.
\end{lemma}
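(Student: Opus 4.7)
The plan is to establish the claimed form by direct differentiation, leveraging the first-order computations already done in Lemmas \ref{M_12 diff} and \ref{M_21 diff}. First I would observe that the Maxwellian $\mathcal{M}_{ij}(\theta)$ has the schematic form
\[
\mathcal{M}_{ij}(\theta) = \frac{n_{i\theta}\, m_i^{3/2}}{(2\pi T_{ij\theta})^{3/2}} \exp\!\left(-\frac{m_i|v-U_{ij\theta}|^2}{2T_{ij\theta}}\right),
\]
and that $U_{ij\theta}$ and $T_{ij\theta}$ are linear (resp.\ affine-quadratic) functions of $(U_{1\theta},U_{2\theta})$ and $(T_{1\theta},T_{2\theta},|U_{2\theta}-U_{1\theta}|^2)$ according to \eqref{12theta}. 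Hence the partial derivatives of $U_{ij\theta}$ and $T_{ij\theta}$ with respect to the macroscopic variables $(H_{1\theta},H_{2\theta}) := (n_{1\theta},U_{1\theta},T_{1\theta},n_{2\theta},U_{2\theta},T_{2\theta})$ are themselves polynomials in these variables. The first derivatives computed in Lemmas \ref{M_12 diff} and \ref{M_21 diff} already exhibit the desired structure: they are rational expressions of the form (polynomial in the macroscopic variables and $v-U_{ij\theta}$)/(monomial in $n_{i\theta}$ and $T_{ij\theta}$) times $\mathcal{M}_{ij}(\theta)$.

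Next I would apply the chain rule once more to each first derivative. Differentiating a term of the form
\[
\frac{\mathcal{Q}(H_{1\theta},H_{2\theta},v-U_{ij\theta})}{n_{i\theta}^a\, T_{ij\theta}^b}\,\mathcal{M}_{ij}(\theta)
\]
with respect to any component of $(H_{1\theta},H_{2\theta})$ produces three types of contributions: (i) derivatives falling on the polynomial $\mathcal{Q}$, which yield polynomials of the same structural type (since $\partial_{H} (v-U_{ij\theta})$ is a constant and $\partial_H U_{ij\theta}, \partial_H T_{ij\theta}$ are at worst affine in the macroscopic variables); (ii) derivatives falling on the denominator $n_{i\theta}^a T_{ij\theta}^b$, which only raise the exponents $a,b$ and introduce additional polynomial numerator factors from the chain rule through $T_{ij\theta}$; and (iii) the derivative falling on $\mathcal{M}_{ij}(\theta)$ itself, which by the explicit computation of $\partial_H \mathcal{M}_{ij}(\theta)$ produces another factor of the form (polynomial in $v-U_{ij\theta}$ and macroscopic variables)/$T_{ij\theta}^{\text{new}}$. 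Each of the three types of terms therefore retains the claimed structure, after possibly enlarging $\lambda,\nu,\xi$ and $\mathcal{P}_{lm}$.

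The actual bookkeeping I would carry out is the following. For each first derivative obtained in Lemmas \ref{M_12 diff} and \ref{M_21 diff}, I apply the five different partial derivatives $\partial_{n_{k\theta}}, \partial_{U_{k\theta}}, \partial_{T_{k\theta}}$ ($k=1,2$), recording the maximum power of $n_{i\theta}$ and $T_{ij\theta}$ appearing in any denominator and the polynomial in the numerator. Because the map from $(n_{i\theta},U_{i\theta},T_{i\theta})$ to $(U_{ij\theta},T_{ij\theta})$ is polynomial, and because $\partial_{U_{ij\theta}} \mathcal{M}_{ij}(\theta)$, $\partial_{T_{ij\theta}} \mathcal{M}_{ij}(\theta)$ each contribute only polynomial factors in $(v-U_{ij\theta})$ divided by powers of $T_{ij\theta}$, the entire computation stays within the class claimed. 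Taking $\lambda,\nu,\xi$ to be the maximum exponent occurring across all 100 entries (many of which are zero, and the Hessian is symmetric so effectively only $55$ need to be checked) and collecting the resulting polynomials into $\mathcal{P}_{lm}$ yields the statement.

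The main obstacle is purely bookkeeping, specifically tracking which powers of $n_{1\theta}$ vs.\ $n_{2\theta}$ arise (note that $\mathcal{M}_{12}$ only carries $n_{1\theta}$ explicitly in its prefactor, while differentiation with respect to $n_{2\theta}$ produces no denominator at all, consistent with $\lambda,\nu$ being allowed to be zero for some entries), and managing the $\gamma|U_{2\theta}-U_{1\theta}|^2$ cross terms in $T_{ij\theta}$ whose second derivatives with respect to $U_{k\theta}$ produce constant contributions that must be absorbed into $\mathcal{P}_{lm}$. No new analytic idea beyond the chain rule is required; existence of such a uniform structural decomposition is essentially automatic once Lemmas \ref{M_12 diff} and \ref{M_21 diff} are in hand.
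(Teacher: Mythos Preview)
Your structural argument (chain rule plus closure of the class ``polynomial over monomial times $\mathcal{M}_{ij}$'' under differentiation) is the right idea, and matches the paper's approach in spirit. However, you have misidentified the differentiation variables: you write $(H_{1\theta},H_{2\theta}) := (n_{1\theta},U_{1\theta},T_{1\theta},n_{2\theta},U_{2\theta},T_{2\theta})$, but the paper defines $H_{k\theta}=(n_{k\theta},\,n_{k\theta}U_{k\theta},\,G_{k\theta})$ in \eqref{defH}. The Hessian in the lemma is with respect to these \emph{moment} variables, not the primitive variables $(n,U,T)$. Consequently the computation must pass through the inverse Jacobian $J_{k\theta}^{-1}$ of Lemma~\ref{Jaco}, whose off-diagonal entries carry factors $1/n_{k\theta}$. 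This is precisely where the $n_{2\theta}^{\nu}$ denominator in the statement comes from for $\mathcal{M}_{12}$: your remark that ``differentiation with respect to $n_{2\theta}$ produces no denominator at all'' shows that your computation, as written, would never generate such a factor and hence would not establish the claimed form.

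The paper's proof makes this explicit: it writes $\nabla_{H}\mathcal{M}_{12}$ as a block $\bigl(J_{1\theta}^{-1}\nabla_{(n_1,U_1,T_1)}\mathcal{M}_{12},\;J_{2\theta}^{-1}\nabla_{(n_2,U_2,T_2)}\mathcal{M}_{12}\bigr)$, then differentiates once more and multiplies again by the block-diagonal inverse Jacobian, obtaining $5\times5$ blocks $T_{ij}=J_{i\theta}^{-1}\nabla_{(n_i,U_i,T_i)}\bigl(J_{j\theta}^{-1}\nabla_{(n_j,U_j,T_j)}\mathcal{M}_{12}\bigr)$. Your bookkeeping plan survives intact once you insert this Jacobian layer; the additional $1/n_{k\theta}$ factors from $J_{k\theta}^{-1}$ simply increase $\lambda,\nu$ and contribute polynomial numerators in $U_{k\theta},T_{k\theta}$, so the structural closure argument still goes through.
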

\begin{proof} The estimates of $\mathcal{M}_{12}(\theta)$ and $\mathcal{M}_{21}(\theta)$ are similar. We only consider the former case. We compute
\begin{align*}
\nabla_{(H_{1\theta},H_{2\theta})}\mathcal{M}_{12}(\theta)&=\left( \frac{\partial(n_{1\theta}, n_{1\theta}U_{1\theta}, G_{1\theta},n_{2\theta}, n_{2\theta}U_{2\theta}, G_{2\theta})} {\partial(n_{1\theta},U_{1\theta},T_{1\theta},n_{2\theta},U_{2\theta},T_{2\theta})} \right)^{-1} \cr
&\quad\times  \nabla_{(n_{1\theta},U_{1\theta},T_{1\theta},n_{2\theta},U_{2\theta},T_{2\theta})}\mathcal{M}_{12}(\theta).
\end{align*}
Then, as in \eqref{block inv}, we have 
\begin{align*}
\nabla_{(H_{1\theta},H_{2\theta})}\mathcal{M}_{12}(\theta)&=\left[ {\begin{array}{cccccc}
		J^{-1}_{1\theta} & 0 \\
		0 & J^{-1}_{2\theta}
\end{array} } \right] \times  \nabla_{(n_{1\theta},U_{1\theta},T_{1\theta},n_{2\theta},U_{2\theta},T_{2\theta})}\mathcal{M}_{12}(\theta) \cr
&= \left[ {\begin{array}{cccccc}
		J_{1\theta}^{-1}\nabla_{(n_{1\theta},U_{1\theta},T_{1\theta})}\mathcal{M}_{12}(\theta)  \\
		J_{2\theta}^{-1}\nabla_{(n_{2\theta},U_{2\theta},T_{2\theta})}\mathcal{M}_{12}(\theta)
\end{array} } \right] .
\end{align*}
Applying the same process once more time, we get 
\begin{align*}
\nabla^2_{(H_{1\theta},H_{2\theta})}\mathcal{M}_{12}(\theta)&=  \left[ {\begin{array}{cccccc}
		J^{-1}_{1\theta} & 0 \\
		0 & J^{-1}_{2\theta}
\end{array} } \right]  \cr
&\quad \times \nabla_{(n_{1\theta},U_{1\theta},T_{1\theta},n_{2\theta},U_{2\theta},T_{2\theta})}\left[ {\begin{array}{cccccc}
J_{1\theta}^{-1}\nabla_{(n_{1\theta},U_{1\theta},T_{1\theta})}\mathcal{M}_{12}(\theta)  \\
J_{2\theta}^{-1}\nabla_{(n_{2\theta},U_{2\theta},T_{2\theta})}\mathcal{M}_{12}(\theta)
\end{array} } \right],
\end{align*}
where the second line on the R.H.S. is equal to
\begin{multline*}
\left[ {\begin{array}{cccccc}
\nabla_{(n_{1\theta},U_{1\theta},T_{1\theta})}\left(J_{1\theta}^{-1}\nabla_{(n_{1\theta},U_{1\theta},T_{1\theta})}\mathcal{M}_{12}(\theta)\right) && \nabla_{(n_{1\theta},U_{1\theta},T_{1\theta})}\left(J_{2\theta}^{-1}\nabla_{(n_{2\theta},U_{2\theta},T_{2\theta})}\mathcal{M}_{12}(\theta)\right) \\
\nabla_{(n_{2\theta},U_{2\theta},T_{2\theta})}\left(J_{1\theta}^{-1}\nabla_{(n_{1\theta},U_{1\theta},T_{1\theta})}\mathcal{M}_{12}(\theta)\right)  && \nabla_{(n_{2\theta},U_{2\theta},T_{2\theta})}\left(J_{2\theta}^{-1}\nabla_{(n_{2\theta},U_{2\theta},T_{2\theta})}\mathcal{M}_{12}(\theta)\right)
\end{array} } \right].
\end{multline*}
Thus we get
\begin{align*}
\nabla^2_{(H_{1\theta},H_{2\theta})}\mathcal{M}_{12}(\theta)=\left[ {\begin{array}{cccccc}
T_{11} && T_{12} \\ T_{21} && T_{22}
\end{array} } \right],
\end{align*}
where
\begin{align*}
T_{ij}= J^{-1}_{i\theta}\nabla_{(n_{i\theta},U_{i\theta},T_{i\theta})}\left(J_{j\theta}^{-1}\nabla_{(n_{j\theta},U_{j\theta},T_{j\theta})}\mathcal{M}_{12}(\theta)\right),
\end{align*}
for $i,j=1,2$. Each $T_{ij}$ is a $5\times 5$ matrix. For simplicity, we only consider the $(1,1)$ and $(1,2)$ components of $\nabla^2_{(H_{1\theta},H_{2\theta})}\mathcal{M}_{12}(\theta)$. We can treat other components similarly. Recall that the first row of $J^{-1}_{1\theta}$ is $(1,0,0,0,0)$, so that 
\begin{align*}
\{\nabla^2_{(H_{1\theta},H_{2\theta})}\mathcal{M}_{12}(\theta)\}_{11}=\frac{\partial }{\partial n_{1\theta}}\frac{\partial \mathcal{M}_{12}(\theta)}{\partial n_{1\theta}} &= \frac{\partial }{\partial n_{1\theta}} \left(\frac{1}{n_{1\theta}}\mathcal{M}_{12}(\theta)\right) = 0.
\end{align*}
Now we consider the $(1,2)$ component of $\nabla^2_{(H_{1\theta},H_{2\theta})}\mathcal{M}_{12}(\theta)$ which is inner product of the first row of $J^{-1}_{1\theta}$ which is $(1,0,0,0,0)$, and the second column of $\nabla_{(n_{1\theta},U_{1\theta},T_{1\theta})}\{J_{1\theta}^{-1}\nabla_{(n_{1\theta},U_{1\theta},T_{1\theta})}\mathcal{M}_{12}(\theta)\}$. Thus,
we only need (1,2) component of $\nabla_{(n_{1\theta},U_{1\theta},T_{1\theta})}\{J_{1\theta}^{-1}\nabla_{(n_{1\theta},U_{1\theta},T_{1\theta})}\mathcal{M}_{12}(\theta)\}$: 
\begin{align}\label{D2M12}
\begin{split}
\{\nabla^2_{(H_{1\theta},H_{2\theta})}\mathcal{M}_{12}(\theta)\}_{12}&= \left[\nabla_{(n_{1\theta},U_{1\theta},T_{1\theta})}\{J_{1\theta}^{-1}\nabla_{(n_{1\theta},U_{1\theta},T_{1\theta})}\mathcal{M}_{12}(\theta)\}\right]_{12} \cr
&=\frac{\partial }{\partial n_{1\theta}} \left[J_{1\theta}^{-1}\nabla_{(n_{1\theta},U_{1\theta},T_{1\theta})}\mathcal{M}_{12}(\theta) \right]_2.
\end{split}
\end{align}
The second component of $\left[J_{1\theta}^{-1}\nabla_{(n_{1\theta},U_{1\theta},T_{1\theta})}\mathcal{M}_{12}(\theta) \right]$ is equal to the inner product of the second row of $J_{1\theta}^{-1}$ and $\nabla_{(n_{1\theta},U_{1\theta},T_{1\theta})}\mathcal{M}_{12}(\theta)$:
\begin{align*}
\left[J_{1\theta}^{-1}\nabla_{(n_{1\theta},U_{1\theta},T_{1\theta})}\mathcal{M}_{12}(\theta) \right]_2 &= \left(-\frac{U_{1\theta}}{n_{1\theta}},\frac{1}{n_{1\theta}},0,0,0\right)\cdot \nabla_{(n_{1\theta},U_{1\theta},T_{1\theta})}\mathcal{M}_{12}(\theta) \cr
&=-\frac{U_{1\theta}}{n_{1\theta}} \frac{\partial \mathcal{M}_{12}(\theta)}{\partial n_{1\theta}} + \frac{1}{n_{1\theta}} \frac{\partial \mathcal{M}_{12}(\theta)}{\partial U_{11\theta}}.
\end{align*}
Substituting this into \eqref{D2M12} gives 
\begin{align*}
\{\nabla^2_{(H_{1\theta},H_{2\theta})}\mathcal{M}_{12}(\theta)\}_{12} 
&=\frac{\partial }{\partial n_{1\theta}} \left(-\frac{U_{11\theta}}{n_{1\theta} }\frac{\partial \mathcal{M}_{12}(\theta)}{\partial n_{1\theta}}+\frac{1}{n_{1\theta}}\frac{\partial \mathcal{M}_{12}(\theta)}{\partial U_{11\theta}}\right) \cr
&= \frac{U_{11\theta}}{n_{1\theta}^2 }\frac{\partial \mathcal{M}_{12}(\theta)}{\partial n_{1\theta}}-\frac{U_{11\theta}}{n_{1\theta} }\frac{\partial^2 \mathcal{M}_{12}(\theta)}{\partial n_{1\theta}^2}-\frac{1}{n_{1\theta}^2}\frac{\partial \mathcal{M}_{12}(\theta)}{\partial U_{11\theta}}+\frac{1}{n_{1\theta}}\frac{\partial^2 \mathcal{M}_{12}(\theta)}{\partial n_{1\theta}\partial U_{11\theta}}.
\end{align*}
Then, from Lemma \ref{M_12 diff} (1) and (2), we have
\begin{align*}
\{\nabla^2_{(H_{1\theta},H_{2\theta})}&\mathcal{M}_{12}(\theta)\}_{12} 
= \frac{U_{11\theta}}{n_{1\theta}^3}\mathcal{M}_{12}(\theta)\cr
& + \frac{1}{n_{1\theta}}\left(\delta m_1\frac{v-U_{12\theta}}{T_{12\theta}} -2\gamma(U_{2\theta}-U_{1\theta})\left(-\frac{3}{2}\frac{1}{T_{12\theta}}+\frac{m_1|v-U_{12\theta}|^2}{2T_{12\theta}^2}\right)\right)\mathcal{M}_{12}(\theta).
\end{align*}
We observe that $(1,2)$ component of $\nabla^2_{(H_{1\theta},H_{2\theta})}\mathcal{M}_{12}(\theta)$ is expressed in the form presented in this lemma.
\end{proof}
We are now ready to estimate the nonlinear terms. The intra-species part is established in \cite{Yun1}:
\begin{lemma}\emph{\cite{Yun1}}\label{nonlin esti1} For sufficiently small $\mathcal{E}(t)$, we have the following inequality for $k=1,2$.
	\begin{align*}
	\langle \partial^{\alpha}_{\beta} \Gamma_{kk}(f_k), g\rangle_{L^2_v} \leq  C\sum_{|\alpha_1|+|\alpha_2|\leq |\alpha|}\|\partial^{\alpha_1}f_k\|_{L^2_v}\|\partial^{\alpha_2}f_k\|_{L^2_v}\|g \|_{L^2_v}.
	\end{align*}
\end{lemma}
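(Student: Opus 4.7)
The plan is to start from the explicit integral representation of $\Gamma_{kk}$ given in Lemma \ref{lin ii}, which writes $\Gamma_{kk}(f_k)$ as a sum of triple products of the form
\[
A_{ij}(\theta,x,v) \, \langle f_k,e_{ki}\rangle_{L^2_v} \langle f_k,e_{kj}\rangle_{L^2_v},
\]
where $A_{ij}$ is the integral in $\theta$ of $\frac{P_{ij}(n_{k\theta},U_{k\theta},T_{k\theta},v-U_{k\theta})}{R_{ij}(n_{k\theta},T_{k\theta})}\frac{\mathcal{M}_{kk}(\theta)}{\sqrt{\mu_k}}(1-\theta)$. Applying $\partial^{\alpha}_{\beta}$ and the Leibniz rule distributes the multi-index $\alpha=\alpha_1+\alpha_2+\alpha_3$ among the three factors, while the velocity multi-index $\beta$ acts only on $A_{ij}$ (since the inner products are velocity-free scalars in $x,t$).

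The derivatives falling on the inner products are straightforward: $\partial^{\alpha_i}\langle f_k,e_{ki}\rangle_{L^2_v}=\langle \partial^{\alpha_i} f_k,e_{ki}\rangle_{L^2_v}$, which by Cauchy--Schwarz is bounded by $C\|\partial^{\alpha_i} f_k\|_{L^2_v}$. For the derivatives falling on $A_{ij}$, I would expand $\partial^{\alpha_3}_{\beta}A_{ij}$ through the chain rule. The $\alpha_3$ derivatives generate sums of products of $\partial^{\gamma_1}n_{k\theta}$, $\partial^{\gamma_2}U_{k\theta}$, $\partial^{\gamma_3}T_{k\theta}$ with $\sum|\gamma_\ell|\le|\alpha_3|$, each of which is controlled pointwise in $x$ by $\sum_{|\gamma|\le|\alpha_3|}\|\partial^{\gamma}f_k\|_{L^2_v}$ thanks to Lemma \ref{macro diff}. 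The $\beta$ derivatives on the Gaussian factor $\mathcal{M}_{kk}(\theta)/\sqrt{\mu_k}$ only bring down polynomials in $(v-U_{k\theta})$ divided by powers of $T_{k\theta}$.

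The key technical point is that, since $\mathcal{E}(t)$ is small, Lemma \ref{macro esti} gives $|n_{k\theta}-n_{k0}|+|U_{k\theta}|+|T_{k\theta}-1|\le C\sqrt{\mathcal{E}(t)}$ uniformly in $\theta\in[0,1]$, so the denominators $R_{ij}(n_{k\theta},T_{k\theta})$ and the powers of $T_{k\theta}$ stay bounded below, and the Gaussian $\mathcal{M}_{kk}(\theta)$ is pointwise dominated by a fixed Maxwellian with variance close to but larger than that of $\mu_k$. Therefore $A_{ij}$ and all its $\partial^{\alpha_3}_{\beta}$ derivatives satisfy a pointwise bound of the form
\[
\Bigl|\partial^{\alpha_3}_{\beta}A_{ij}(\theta,x,v)\Bigr|\le C\, \mathcal{Q}(|v|)\, e^{-c|v|^2}\,\Psi(x,t),
\]
where $\mathcal{Q}$ is a polynomial in $|v|$ and $\Psi(x,t)$ collects the factors $\sum_{|\gamma|\le|\alpha_3|}\|\partial^{\gamma}f_k\|_{L^2_v}$ (or quadratic combinations thereof coming from $T_{k\theta}$ by Lemma \ref{macro diff}). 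After integration over $\theta$, the same estimate persists.

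To conclude, pair $\partial^{\alpha}_{\beta}\Gamma_{kk}(f_k)$ with $g$ in $L^2_v$ and apply Cauchy--Schwarz in $v$: the Gaussian-times-polynomial weight is in $L^2_v$ with a universal constant, so the $v$-integral produces $C\|g\|_{L^2_v}$, while the remaining factor is a product $\Psi(x,t)\cdot\|\partial^{\alpha_1}f_k\|_{L^2_v}\cdot\|\partial^{\alpha_2}f_k\|_{L^2_v}$, which is of the claimed form once $\Psi$ is written out via Lemma \ref{macro diff}. The main obstacle is bookkeeping the case $|\alpha_3|\ne 0$: one must verify that all chain-rule contributions involving high derivatives of $n_{k\theta},U_{k\theta},T_{k\theta}$ fit in the template $\|\partial^{\alpha_1}f_k\|_{L^2_v}\|\partial^{\alpha_2}f_k\|_{L^2_v}$ with $|\alpha_1|+|\alpha_2|\le|\alpha|$, which requires using that Lemma \ref{macro diff} itself produces either a linear or a quadratic expression in the $\|\partial^{\gamma}f_k\|_{L^2_v}$, and absorbing the quadratic-in-$f_k$ terms using the smallness of $\mathcal{E}(t)$.
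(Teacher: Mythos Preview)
The paper does not prove this lemma; it simply cites \cite{Yun1}, and the argument there is precisely the one you outline: expand $\Gamma_{kk}$ via the integral representation in Lemma~\ref{lin ii}, distribute $\partial^{\alpha}_{\beta}$ by Leibniz (with $\beta$ hitting only the $v$-dependent kernel $A_{ij}$), control the macroscopic derivatives through Lemma~\ref{macro esti} and Lemma~\ref{macro diff}, and use the Gaussian decay of $\mathcal{M}_{kk}(\theta)/\sqrt{\mu_k}$ for $T_{k\theta}$ close to $1$ to absorb all polynomial factors when pairing with $g$. Your sketch is correct and matches both \cite{Yun1} and the paper's own proof of the inter-species analogue Lemma~\ref{nonlin esti}; the bookkeeping you flag for $|\alpha_3|\neq 0$ is exactly the ``at most one high-order factor'' argument the paper carries out explicitly for $\Gamma_{12B}$.
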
 
So we focus on the inter-species part.
\begin{lemma}\label{nonlin esti} Let $N\geq3$ and $|\alpha|+|\beta|\leq N$. For sufficiently small $\mathcal{E}(t)$, we have
\begin{align*}
\langle \partial^{\alpha}_{\beta} \Gamma_{ij}, g\rangle_{L^2_v} \leq  C\sum_{|\alpha_1|+|\alpha_2|\leq |\alpha|}\|\partial^{\alpha_1}(f_1,f_2)\|_{L^2_v}\|\partial^{\alpha_2}_{\beta}(f_1,f_2)\|_{L^2_v}\|g \|_{L^2_v},
\end{align*}
for $(i,j)=(1,2)$ or $(2,1)$.
\end{lemma}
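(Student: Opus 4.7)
The plan is to reduce the estimate on $\Gamma_{ij}$ to the two toolkits already developed: the macroscopic bounds of Lemmas~\ref{macro esti}--\ref{macro diff} and the structural representation of $\nabla^2_{(H_{1\theta},H_{2\theta})}\mathcal{M}_{ij}(\theta)$ of Lemma~\ref{nonlin}. Using the explicit decomposition given in Section~\ref{linMBGK}, the inter-species perturbation $\Gamma_{ij}$ splits naturally into three pieces: (i) a density-perturbation piece $(n_j-n_{j0})(P_if_i-f_i)$, (ii) a density-perturbation piece multiplying the linear inter-species momentum--energy operator (the $(1-\delta)$/$(1-\omega)$ terms), and (iii) the genuinely quadratic Taylor remainder $n_j\,\frac{1}{\sqrt{\mu_i}}\int_0^1 \mathcal{M}_{ij}''(\theta)(1-\theta)d\theta$. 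Each is bilinear in $(f_1,f_2)$, so I would estimate them one by one and collect the result.

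For pieces (i) and (ii), I would apply Leibniz to $\partial^{\alpha}_{\beta}$ and distribute the derivatives onto $n_j-n_{j0}$ and onto the linear-in-$f$ factor. When no $x,t$-derivative falls on $n_j-n_{j0}$, Lemma~\ref{macro esti} combined with Sobolev embedding $H^2_x\hookrightarrow L^\infty_x$ (using $N\ge 3$) gives the pointwise bound $|n_j-n_{j0}|\leq C\sqrt{\mathcal{E}(t)}$; for derivatives that do land on it, Lemma~\ref{macro diff} converts $|\partial^{\alpha'}n_j|$ into $\sum_{|\alpha''|\leq |\alpha'|}\|\partial^{\alpha''}f_j\|_{L^2_v}$. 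The remaining linear-in-$f$ factor only involves either $P_i f_i - f_i$, or inner products with the basis $e_{ki}$ times $e_{\ell i}$, both of which are $L^2_v$-bounded by $\|\partial^{\alpha}_{\beta}f\|_{L^2_v}$ after a Cauchy--Schwarz in $v$ (note $\partial_\beta$ on the basis functions $e_{\ell i}$ simply produces Gaussian-multiplied polynomials that remain in $L^2_v$). Pairing with $g$ and applying Cauchy--Schwarz yields the desired bilinear bound.

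For piece (iii), which is the main obstacle, I would invoke Lemma~\ref{nonlin} to write the integrand as
\[
\frac{\mathcal{P}_{lm}(n_{1\theta},n_{2\theta},U_{1\theta},U_{2\theta},T_{1\theta},T_{2\theta},v-U_{ij\theta})}{n_{1\theta}^\lambda n_{2\theta}^\nu T_{ij\theta}^\xi}\,\mathcal{M}_{ij}(\theta)
\]
times two macroscopic ``first derivatives'' of the form $(\langle f_1,e_{1i}\rangle_{L^2_v},\langle f_2,e_{2i}\rangle_{L^2_v})$ that come from applying the chain rule to the Hessian contracted against $(H_{1\theta}-H_{10},H_{2\theta}-H_{20})^{\otimes 2}$. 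Applying $\partial^{\alpha}_{\beta}$ and then Leibniz produces a finite sum of terms in which (a) $x,t$-derivatives are distributed among the $n_{k\theta}$, $U_{k\theta}$, $T_{k\theta}$ and among the two macroscopic factors, and (b) $v$-derivatives fall on the polynomial $\mathcal{P}_{lm}$ and on the exponential $\mathcal{M}_{ij}(\theta)/\sqrt{\mu_i}$, producing additional polynomial factors in $(v-U_{ij\theta})/T_{ij\theta}$. Lemma~\ref{macro esti} then guarantees that denominators are bounded below (since $n_{k\theta}$ stays near $n_{k0}$ and $T_{ij\theta}$ near $1$), and that the ratio $\mathcal{M}_{ij}(\theta)/\sqrt{\mu_i}$ is dominated by an integrable Gaussian uniformly in $\theta$, allowing all polynomial weights to be absorbed in $L^2_v$. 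The $x,t$-derivatives of macroscopic fields are bounded by $\sum\|\partial^{\alpha'}(f_1,f_2)\|_{L^2_v}$ via Lemma~\ref{macro diff}, while the remaining factor carries the full $\partial_\beta$ weight.

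Assembling these bounds, pairing with $g$, and using Cauchy--Schwarz in $v$ pushes all ``top-derivative'' factors onto one copy of $f$ (with $\partial_\beta$) and all ``low-derivative'' factors, after Sobolev embedding in $x$ where needed, onto the other copy (in $L^2_v$), producing exactly the stated bilinear estimate. The main technical delicacy, which I expect to be the real work, is the bookkeeping in piece (iii): one must verify that whenever $\partial_\beta$ differentiates the Gaussian $\mathcal{M}_{ij}(\theta)/\sqrt{\mu_i}$, the resulting polynomial-times-Gaussian stays $L^2_v$-integrable uniformly in $\theta\in[0,1]$, which is where the smallness of $\mathcal{E}(t)$ enters (to keep $T_{ij\theta}$ strictly above $1/2$, say, so that $\mathcal{M}_{ij}(\theta)/\sqrt{\mu_i}$ has good Gaussian decay).
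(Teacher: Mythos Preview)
Your proposal is correct and follows essentially the same route as the paper: the three pieces you identify are exactly the paper's decomposition $\Gamma_{12}=\Gamma_{12A}+\Gamma_{12C}+\Gamma_{12B}$, and each is handled with the same tools (Leibniz, the macroscopic bounds of Lemmas~\ref{macro esti}--\ref{macro diff}, the structural Lemma~\ref{nonlin}, and the Gaussian-decay argument for $\mathcal{M}_{ij}(\theta)/\sqrt{\mu_i}$ using $T_{ij\theta}$ close to $1$). One small simplification relative to your sketch: for pieces (i) and (ii) the paper does not invoke Sobolev embedding for the undifferentiated density factor but uses the direct pointwise-in-$x$ bound $|\partial^{\alpha'}(n_j-n_{j0})|=|\int \partial^{\alpha'}f_j\sqrt{\mu_j}\,dv|\le C\|\partial^{\alpha'}f_j\|_{L^2_v}$, which already has the bilinear form required by the statement.
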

\begin{proof}
We only consider the $\Gamma_{12}$ since the estimate of $\Gamma_{21}$ is similar.
Therefore, we focus on the estimates of the nonlinear terms $\Gamma_{12}$ and $\Gamma_{21}$. For convenience, we divide $\Gamma_{12}$ into three parts:
\begin{align*}
\Gamma_{12} = \Gamma_{12A}+\Gamma_{12B}+\Gamma_{12C}, 
\end{align*}
where
\begin{align*}
\Gamma_{12A}&= (n_2-n_{20})(P_1f_1-f_1), \cr
\Gamma_{12B}&= n_2 \frac{1}{\sqrt{\mu_1}} \int_0^1\mathcal{M}_{12}''(\theta)(1-\theta)d\theta, \cr
\Gamma_{12C}&= (n_2-n_{20})\bigg[(1-\delta) \sum_{2\leq i \leq 4}  \left(\sqrt{\frac{n_{10}}{n_{20}}}\sqrt{\frac{m_1}{m_2}}\langle f_2, e_{2i} \rangle_{L^2_v}-\langle f_1, e_{1i} \rangle_{L^2_v}\right)e_{1i} \cr
&\quad +(1-\omega) \left(\sqrt{\frac{n_{10}}{n_{20}}}\langle f_2, e_{25} \rangle_{L^2_v}-\langle f_1, e_{15} \rangle_{L^2_v}\right)e_{15} \bigg].
\end{align*}
We first write $\Gamma_{12B}$ in a concise form before we delve into the estimate.  For this, compute applying the chain rule twice on $\mathcal{M}_{ij}$:
\begin{align*}
&\mathcal{M}_{ij}''(\theta) \cr
&\quad= \frac{d}{d\theta} \bigg( \frac{d n_{\theta1}}{d \theta}\frac{d \mathcal{M}_{ij}}{d n_{\theta1}}+\frac{d (n_{\theta1}U_{\theta1})}{d \theta}\frac{d \mathcal{M}_{ij}}{d (n_{\theta1}U_{\theta1})}+\frac{d G_{\theta1}}{d \theta}\frac{d \mathcal{M}_{ij}}{d G_{\theta1}} \cr
&\qquad +\frac{d n_{\theta2}}{d \theta}\frac{d \mathcal{M}_{ij}}{d n_{\theta2}}+\frac{d (n_{\theta2}U_{\theta2})}{d \theta}\frac{d \mathcal{M}_{ij}}{d (n_{\theta2}U_{\theta2})}+\frac{d G_{\theta2}}{d \theta}\frac{d \mathcal{M}_{ij}}{d G_{\theta2}} \bigg) \cr
&\quad=(n_1-n_{10}, n_1 U_1, G_1,n_2-n_{20}, n_2 U_2, G_2)^T\left\{\nabla^2_{(n_{1\theta}, n_{1\theta}U_{1\theta}, G_{1\theta},n_{2\theta}, n_{2\theta}U_{2\theta}, G_{2\theta})}\mathcal{M}_{ij}(\theta)\right\} \cr
&\qquad\times (n_1-n_{10}, n_1 U_1, G_1,n_2-n_{20}, n_2 U_2, G_2).
\end{align*}
Therefore, if we define
\begin{align}\label{defH}
H_{k}=(n_{k}, n_{k}U_{k}, G_{k}) ,\quad \textit{and} \quad H_{k\theta}=(n_{k\theta}, n_{k\theta}U_{k\theta}, G_{k\theta}),
\end{align}
we can rewrite $\Gamma_{12B}$ as 
\begin{align*}
\Gamma_{12B} &=  \frac{n_2}{\sqrt{\mu_1}}  (H_1-H_{10},H_2-H_{20})^T  \cr
&\quad \times \int_0^1 \left\{\nabla^2_{(H_{1\theta},H_{2\theta})}\mathcal{M}_{ij}(\theta)\right\} (1-\theta)d\theta  (H_1-H_{10},H_2-H_{20}).
\end{align*}
Now we estimate each part of  $\Gamma_{12}$. \\
$\bullet$ {\bf Estimate of $\Gamma_{12A}$:} We take a derivative $\partial^{\alpha}_{\beta}$ on $\Gamma_{12A}$:
\begin{align*}
\partial^{\alpha}_{\beta}\Gamma_{12A} &=\sum_{\alpha_1+\alpha_2=\alpha} C_{\alpha_1} \partial^{\alpha_1}(n_2-n_{20})\partial^{\alpha_2}_{\beta}(P_1f_1-f_1).
\end{align*}
From \eqref{rho decomp}, we have 
\begin{align}\label{e1}
\partial^{\alpha}(n_2-n_{20}) \leq C\| \partial^{\alpha}f_2 \|_{L^2_v}.
\end{align}
For an estimate of the macroscopic projection $P_1f_1$, since $\partial_{\beta} e_{1i}$ has an exponential decay, we get
\begin{align*}
\| \partial^{\alpha}_{\beta}P_1f_1 \|_{L^2_v}= \| \partial_{\beta}P_1\partial^{\alpha}f_1 \|_{L^2_v} \leq C_{\beta}\| \partial^{\alpha}f_1 \|_{L^2_v}.
\end{align*}
Thus we have 
\begin{align}\label{P1}
\langle \partial^{\alpha}_{\beta}(P_1f_1-f_1), g \rangle_{L^2_v}
\leq C\left(\| \partial^{\alpha}f_1 \|_{L^2_v}+\| \partial^{\alpha}_{\beta}f_1 \|_{L^2_v}\right)\|g\|_{L^2_v}.
\end{align}
Combining \eqref{e1} and \eqref{P1}, we obtain 
\begin{align*}
\langle \partial_{\beta}^{\alpha} \Gamma_{12A},g \rangle_{L^2_v} \leq C\sum_{|\alpha_1|+|\alpha_2|+|\beta|\leq N}\| \partial^{\alpha_1}f_2 \|_{L^2_v}\left(\| \partial^{\alpha_2}f_1 \|_{L^2_v}+\| \partial^{\alpha_2}_{\beta}f_1 \|_{L^2_v}\right)\|g\|_{L^2_v}.
\end{align*}
$\bullet$ {\bf Estimate of $\Gamma_{12C}$:} We take a derivative $\partial_{\beta}^{\alpha}$ on $\Gamma_{12C}$: 
\begin{align*}
\partial^{\alpha}_{\beta}\Gamma_{12C}&= \sum_{\alpha_1+\alpha_2=\alpha}C_{\alpha_1} \partial^{\alpha_1}(n_2-n_{20})\cr 
&\quad \times \bigg[(1-\delta) \sum_{2\leq i \leq 4}  \left(\sqrt{\frac{n_{10}}{n_{20}}}\sqrt{\frac{m_1}{m_2}}\langle \partial^{\alpha_2}f_2, e_{2i} \rangle_{L^2_v}-\langle \partial^{\alpha_2}f_1, e_{1i} \rangle_{L^2_v}\right)\partial_{\beta}e_{1i} \cr
&\quad +(1-\omega) \left(\sqrt{\frac{n_{10}}{n_{20}}}\langle \partial^{\alpha_2}f_2, e_{25} \rangle_{L^2_v}-\langle \partial^{\alpha_2}f_1, e_{15} \rangle_{L^2_v}\right)\partial_{\beta}e_{15} \bigg].
\end{align*}
Since each $e_{1i}$ and $e_{2i}$ has exponential decay for $i=1,\cdots,5$, we can have
\begin{align}
\langle \partial^{\alpha}f_1, e_{1i} \rangle_{L^2_v} \leq C\| \partial^{\alpha}f_1 \|_{L^2_v}, \qquad \langle \partial^{\alpha}f_2, e_{2i} \rangle_{L^2_v} \leq C\| \partial^{\alpha}f_2 \|_{L^2_v},
\label{e2}
\end{align}
and
\begin{align}\label{e3}
\langle \partial_{\beta}e_{1i}, g \rangle_{L^2_v} \leq C\|g\|_{L^2_v} \qquad \langle \partial_{\beta}e_{2i}, g \rangle_{L^2_v} \leq C\|g\|_{L^2_v}.
\end{align}
Thus by using \eqref{e1}, \eqref{e2}, and \eqref{e3}, we get 
\begin{align*}
\langle \partial^{\alpha}_{\beta} \Gamma_{12C}, g\rangle_{L^2_v} \leq  C\sum_{|\alpha_1|+|\alpha_2|\leq |\alpha|}\|\partial^{\alpha_1}f_2\|_{L^2_v}\|\partial^{\alpha_2}(f_1,f_2)\|_{L^2_v}\|g \|_{L^2_v}.
\end{align*}
$\bullet$ {\bf Estimate of $\Gamma_{12B}$:} Taking $\partial^{\alpha}_{\beta}$ on $\Gamma_{12B}$ gives
\begin{align}\label{gamma12Bes}
\begin{split}
\partial^{\alpha}_{\beta}\Gamma_{12B} &= \sum_{\sum \alpha_i=\alpha}C_{\alpha_i}\partial^{\alpha_0}n_2 \partial^{\alpha_1}(H_1-H_{10},H_2-H_{20})^T \cr 
&\times \int_0^1\partial^{\alpha_2}_{\beta}\left\{\frac{1}{\sqrt{\mu_1}}\nabla^2_{(H_{1\theta},H_{2\theta})}\mathcal{M}_{12}(\theta)\right\}(1-\theta)d\theta  \partial^{\alpha_3}(H_1-H_{10},H_2-H_{20}) .
\end{split}
\end{align}
By the definition of $H_k$ in \eqref{defH}, applying \eqref{lin1} yields
\begin{align*}
\partial^{\alpha}(H_k-H_{k0})&=\partial^{\alpha}(n_k-n_{k0}, n_k U_k, G_k)= \left(  \langle \partial^{\alpha}f_k, e_{k1}  \rangle_{L^2_v}, \cdots, \langle \partial^{\alpha}f_k, e_{k5} \rangle_{L^2_v}  \right),
\end{align*}
for $k=1,2$.
Thus we have
\begin{align}\label{HkH0}
|\partial^{\alpha}(H_k-H_{k0})| \leq C \| \partial^{\alpha}f_k \|_{L^2_v}.
\end{align}
For notational simplicity, we set
\begin{align*}
A_{lm}=\int_0^1\partial^{\alpha_2}_{\beta}\left\{\frac{1}{\sqrt{\mu_1}}\nabla^2_{(H_{1\theta},H_{2\theta})}\mathcal{M}_{12}(\theta)\right\}_{l,m}(1-\theta)d\theta.
\end{align*}
Then by Lemma \ref{nonlin}, we can write it as 
\begin{align}\label{Alm}
A_{lm}=\int_0^1\partial^{\alpha_2}_{\beta}\left\{\frac{1}{\sqrt{\mu_1}} \frac{\mathcal{P}_{lm}(n_{1\theta},n_{2\theta},U_{1\theta},U_{2\theta},T_{1\theta},T_{2\theta},v-U_{12\theta})}{n_{1\theta}^{\lambda}n_{2\theta}^{\nu}T_{12\theta}^{\xi}}\mathcal{M}_{12}(\theta)\right\}(1-\theta)d\theta.
\end{align}
By the product rule, we have 
\begin{align*}
&\partial^{\alpha}_{\beta}\left\{ \frac{\mathcal{P}_{lm}(n_{1\theta},n_{2\theta},U_{1\theta},U_{2\theta},T_{1\theta},T_{2\theta},v-U_{12\theta})}{n_{1\theta}^{\lambda}n_{2\theta}^{\nu}T_{12\theta}^{\xi}}\right\} \cr
&=C_{\alpha}\sum_{\sum \alpha_i=\alpha}\bigg\{\mathcal{P}_{lm}(\partial^{\alpha_1}n_{1\theta},\partial^{\alpha_2}n_{2\theta},\partial^{\alpha_3}U_{1\theta},\partial^{\alpha_4}U_{2\theta},\partial^{\alpha_5}T_{1\theta},\partial^{\alpha_6}T_{2\theta},\partial^{\alpha_7}_{\beta}(v-U_{12\theta})) \cr
&\quad \times  \partial^{\alpha_8}\frac{1}{n_{1\theta}^{\lambda}n_{2\theta}^{\nu}T_{12\theta}^{\xi}}\bigg\} 
\end{align*}
If $|\alpha_i|\leq N-2$, then by Sobolev embedding $H^2 \subset\subset L^{\infty}$ and Lemma \ref{macro diff}, we have 
\begin{align*}
|\partial^{\alpha}n_{k\theta}(x,t)|+|\partial^{\alpha}U_{k\theta}(x,t)|+|\partial^{\alpha}T_{k\theta}(x,t)|\leq C\| \partial^{\alpha}f_k \|_{L^2_v}\leq \sqrt{\mathcal{E}(t)}.
\end{align*}
Since $N\geq 3$, there is at most one $\alpha_i$ that exceeds $N-2$. Thus, for sufficiently small $\mathcal{E}(t)$, we have
\begin{align*}
\partial^{\alpha}_{\beta}\left\{ \frac{\mathcal{P}_{lm}(n_{1\theta},n_{2\theta},U_{1\theta},U_{2\theta},T_{1\theta},T_{2\theta},v-U_{12\theta})}{n_{1\theta}^{\lambda}n_{2\theta}^{\nu}T_{12\theta}^{\xi}}\right\} \leq C\sqrt{\mathcal{E}(t)}\|\partial^{\alpha}f \|_{L^2_v} \mathcal{P}_{lm}(v).
\end{align*}
Substituting it in \eqref{Alm} yields  
\begin{align*}
A_{lm} \leq C\sqrt{\mathcal{E}(t)}\|\partial^{\alpha}f \|_{L^2_v} \mathcal{P}_{lm}(v) \partial^{\alpha}_{\beta} \exp\left(-\frac{|v-U_{12\theta}|^2}{2\frac{T_{12\theta}}{m_1}}+\frac{m_1|v|^2}{4}\right).
\end{align*}
Similarly, the derivative of the exponential part can be bounded as follows:
\begin{multline*}
\partial^{\alpha}_{\beta} \exp\left(-\frac{|v-U_{12\theta}|^2}{2\frac{T_{12\theta}}{m_1}}+\frac{m_1|v|^2}{4}\right) \cr
\leq C\sqrt{\mathcal{E}(t)}\|\partial^{\alpha}f \|_{L^2_v} \mathcal{P}_{lm}(v) \exp\left(-\frac{|v-U_{12\theta}|^2}{2\frac{T_{12\theta}}{m_1}}+\frac{m_1|v|^2}{4}\right).
\end{multline*}
By Lemma \ref{macro esti} (3), a sufficiently small $\mathcal{E}(t)$ guarantees $T_{12\theta} \leq 3/2$, so that
\begin{align}\label{Almg}
\begin{split}
\langle A_{lm} , g \rangle_{L^2_v} &\leq C \left\| P(v) \exp\left(-\frac{2m_1|v-U_{12\theta}|^2}{3}+\frac{m_1|v|^2}{2}\right) \right\|_{L^2_v}  \| g \|_{L^2_v} \cr
&\leq C \left\| P(v) \exp\left(-\frac{m_1|v-4U_{12\theta}|^2}{6}+2m_1|U_{12\theta}|^2\right) \right\|_{L^2_v}  \| g \|_{L^2_v} \cr
&\leq C\| g \|_{L^2_v},
\end{split}
\end{align}
where we used $e^{2m_1|U_{12\theta}|^2}\leq C$ for sufficiently small $\mathcal{E}(t)$. Substituting \eqref{HkH0} and \eqref{Almg} on \eqref{gamma12Bes} gives the desired result.
\end{proof}

\subsection{Local existence}
In this part, we prove the existence of a local-in-time classical solution of the mixture BGK model \eqref{CCBGK}.
\begin{theorem}\label{theo_loc_ex}
Let $F_{10}=\mu_1 + \sqrt{\mu_1} f_{10}\geq 0$ and $F_{20}= \mu_2 + \sqrt{\mu_2} f_{20} \geq 0$. There exists $ T_*>0$ and $M_0>0$ such that if $\mathcal{E}(0) \leq \frac{M_0}{2}$, then there exists a unique local-in-time solution $(F_1,F_2)$ of \eqref{CCBGK} such that 
\begin{enumerate}
\item The distribution functions $F_1(x,v,t)$ and $F_2(x,v,t)$ are non-negative.
\item The high-order energy $\mathcal{E}(t)$ is uniformly bounded:
$$ \sup_{0 \leq t \leq T_*} \mathcal{E}(t) \leq M_0.$$
\item The high-order energy is continuous in $t \in [0,T_*)$.
\item The conservation laws \eqref{conservf} hold for all $t \in [0,T_*)$.
\end{enumerate}
\end{theorem}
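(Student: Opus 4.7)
The plan is to construct the local-in-time solution through a standard linearized iteration scheme on the original distribution functions $F_k$, in order to preserve non-negativity, and then transfer energy estimates to the perturbation variables $f_k$. Starting from $F_k^{(0)} \equiv \mu_k$, given $(F_1^{(n)},F_2^{(n)})$, I define $(F_1^{(n+1)},F_2^{(n+1)})$ as the unique solution to the decoupled linear transport--absorption system
\begin{align*}
\partial_t F_1^{(n+1)}+v\cdot\nabla_x F_1^{(n+1)}+\bigl(n_1^{(n)}+n_2^{(n)}\bigr)F_1^{(n+1)}&=n_1^{(n)}\mathcal{M}_{11}^{(n)}+n_2^{(n)}\mathcal{M}_{12}^{(n)},\\
\partial_t F_2^{(n+1)}+v\cdot\nabla_x F_2^{(n+1)}+\bigl(n_1^{(n)}+n_2^{(n)}\bigr)F_2^{(n+1)}&=n_2^{(n)}\mathcal{M}_{22}^{(n)}+n_1^{(n)}\mathcal{M}_{21}^{(n)},
\end{align*}
with initial data $F_{k0}$. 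This system is solved explicitly by Duhamel's formula along straight characteristics. Provided $\mathcal{E}(f^{(n)})$ remains small, Lemma \ref{macro esti} guarantees $n_k^{(n)}>0$ and $T_{ij}^{(n)}>0$, so all the Maxwellians are non-negative and the source is non-negative; inductively $F_k^{(n+1)}\ge 0$, which yields property (1) in the limit.

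Next, I translate the iteration into the perturbed variables via $F_k^{(n+1)}=\mu_k+\sqrt{\mu_k}f_k^{(n+1)}$ and the linearization of Proposition \ref{linearize}, obtaining an equation of the schematic form
\[
\partial_t f_k^{(n+1)}+v\cdot\nabla_x f_k^{(n+1)}=\bigl[\text{linear in }f^{(n+1)}\text{ with bounded coefficients}\bigr]+\bigl[\text{terms of }\Gamma\text{-type in }f^{(n)}\bigr].
\]
Applying $\partial^\alpha_\beta$ with $|\alpha|+|\beta|\le N$, pairing with $\partial^\alpha_\beta f_k^{(n+1)}$ in $L^2_{x,v}$, and summing, I estimate the linear contributions by the boundedness of $P_k$ and the operator $L$, while the nonlinear contributions on the right are controlled by Lemmas \ref{nonlin esti1} and \ref{nonlin esti}. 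This produces a differential inequality of the form
\[
\tfrac{d}{dt}\mathcal{E}\bigl(f^{(n+1)}\bigr)\le C\,\mathcal{E}\bigl(f^{(n+1)}\bigr)+C\bigl(1+\sqrt{\mathcal{E}(f^{(n)})}\bigr)\mathcal{E}\bigl(f^{(n)}\bigr).
\]
Choosing $M_0$ and $T_*$ appropriately and running an induction closes the bootstrap and yields $\sup_{0\le t\le T_*}\mathcal{E}(f^{(n)}(t))\le M_0$ uniformly in $n$, giving property (2).

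To pass to the limit, I would perform an $L^2_{x,v}$ estimate on the differences $g_k^{(n+1)}:=f_k^{(n+1)}-f_k^{(n)}$. Using that the macroscopic fields depend Lipschitz-continuously on $f$ on the bounded set $\{\mathcal{E}\le M_0\}$, together with the same nonlinear estimates applied to the differences, one obtains an inequality of the form $\tfrac{d}{dt}\|g^{(n+1)}\|_{L^2_{x,v}}^2\le C\|g^{(n+1)}\|_{L^2_{x,v}}^2+C\|g^{(n)}\|_{L^2_{x,v}}^2$, which implies that $\{f^{(n)}\}$ is Cauchy in $C([0,T_*];L^2_{x,v})$ after possibly shrinking $T_*$. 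Interpolating with the uniform $\mathcal{E}$-bound and applying lower semicontinuity yields a limit $(f_1,f_2)$ enjoying the same uniform bound and satisfying \eqref{CCBGK} in the classical sense by the Sobolev embedding $H^N_{x,v}\hookrightarrow C^1$ for $N\ge 3$. Property (3), the continuity of $\mathcal{E}$ in $t$, follows from the energy identity together with strong continuity of $f_k$ in the top-order norm, and property (4) follows by testing \eqref{CCBGK} against $(1,m_kv,m_k|v|^2)$ and invoking the cancellation properties stated in the introduction.

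The main obstacle is the energy-closure step: the inter-species Maxwellians $\mathcal{M}_{12}^{(n)}$ and $\mathcal{M}_{21}^{(n)}$ depend on the combined macroscopic fields $(U_{ij}^{(n)},T_{ij}^{(n)})$ of both components, so the nonlinear term $\Gamma_{12B}$ mixes derivatives of $f_1$ and $f_2$ in a complicated way through the second-derivative structure of Lemma \ref{nonlin}. Ensuring that every quadratic contribution is absorbed as $\sqrt{\mathcal{E}(f^{(n)})}\,\mathcal{E}(f^{(n+1)})$ — rather than accumulating loss-of-derivative terms — requires the careful distribution of $\partial^\alpha_\beta$ onto the polynomial factors, the Jacobian inverses, and the exponential in Lemma \ref{nonlin esti}, and the full use of $N\ge 3$ through $H^2\hookrightarrow L^\infty$ so that at most one factor carries the top-order derivative.
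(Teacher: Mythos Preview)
Your proposal is correct and follows essentially the same route as the paper: the same linearized iteration on the original $F_k$ (to preserve positivity via Duhamel), the same translation to the perturbed variables, and the same energy estimate via $\partial^\alpha_\beta$-differentiation and pairing, closed by the nonlinear bounds of Lemmas~\ref{nonlin esti1} and~\ref{nonlin esti}. The paper in fact gives less detail than you do on the Cauchy-in-$L^2$ step and on properties (3)--(4), deferring those to the standard argument in \cite{Guo VMB}; the only cosmetic differences are that the paper initializes the iteration at $F_{k0}$ rather than $\mu_k$, and in your final paragraph the quadratic contributions from $\Gamma(f^{(n)})$ are actually absorbed as $\mathcal{E}(f^{(n)})\sqrt{\mathcal{E}(f^{(n+1)})}$ (not $\sqrt{\mathcal{E}(f^{(n)})}\,\mathcal{E}(f^{(n+1)})$), since all nonlinear factors sit at step $n$ and only the test function is at step $n+1$---but this does not affect the closure.
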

\begin{proof}
We define an iteration of the mixture BGK model \eqref{CCBGK} as follows:
\begin{align*}
\begin{aligned}
\partial_t F_1^{n+1}+v\cdot \nabla_xF_1^{n+1}&=n_1(F_1^n)(\mathcal{M}_{11}(F_1^n)-F_1^{n+1})\cr 
&\quad +n_2(F_2^n)(\mathcal{M}_{12}(F_1^n,F_2^n)-F_1^{n+1}), \cr
\partial_t F_2^{n+1}+v\cdot \nabla_xF_2^{n+1}&=n_2(F_2^n)(\mathcal{M}_{22}(F_2^n)-F_2^{n+1})\cr
&\quad +n_1(F_1^n)(\mathcal{M}_{21}(F_1^n,F_2^n)-F_2^{n+1}),
\end{aligned}
\end{align*}
and $F_1^{n+1}(x,v,0)=F_{10}(x,v)$ and $F_2^{n+1}(x,v,0)=F_{20}(x,v)$ for all $n \geq 0$. We start the iteration with $F_1^0(x,v,t)=F_{10}(x,v)$ and $F_2^0(x,v,t)=F_{20}(x,v)$.

We split $F_1^n=\mu_1+\sqrt{\mu_1}f_1^n$, and $F_2^n=\mu_2+\sqrt{\mu_2}f_2^n$ for all $n \in \mathbb{N}$ and use the linearization of the Maxwellian given in Proposition \ref{linearize} and Lemma \ref{lin ii} to get
\begin{align*}
\partial_t f_1^{n+1}+v\cdot \nabla_xf_1^{n+1}&=(n_{10}+n_{20})(P_1f_1^n-f_1^{n+1})+L_{12}^2(f_1^n,f_2^n)+\Gamma_{11}(f_1^n)+\Gamma_{12}(f_1^n,f_2^n), \cr
\partial_t f_2^{n+1}+v\cdot \nabla_xf_2^{n+1}&=(n_{10}+n_{20})(P_2f_2^n-f_2^{n+1})+L_{21}^2(f_1^n,f_2^n)+\Gamma_{22}(f_2^n)+\Gamma_{21}(f_1^n,f_2^n).
\end{align*}
Then the local existence can be constructed by the standard argument as in \cite{Guo VMB}.
The key ingredient is the uniform control of the high-order energy norm in each iteration step. So we only prove the following auxiliary lemma below.
\end{proof}
\begin{lemma}
Let $\mathcal{E}(0)<\frac{M_0}{2}$. Then there exists $T_*>0$ and $M_0>0$ such that $\mathcal{E}(f^n(t))<M_0$ for all $n\geq 0$ and $t\in [0,T_*].$
\end{lemma}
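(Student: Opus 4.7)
The argument proceeds by induction on $n$. The base case $n=0$ is immediate, since $f^0(x,v,t) = f_0(x,v)$ is independent of time, giving $\mathcal{E}(f^0(t)) = \mathcal{E}(0) < M_0/2 < M_0$ for all $t$. For the induction step, I will assume $\sup_{t \in [0, T_*]} \mathcal{E}(f^n(t)) \leq M_0$ and derive the same bound for $f^{n+1}$ on the same interval, provided $T_*$ and $M_0$ are chosen appropriately.

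The core estimate is obtained by applying $\partial^{\alpha}_{\beta}$ to each iteration equation and taking the $L^2_{x,v}$ inner product with $\partial^{\alpha}_{\beta} f_k^{n+1}$. Since $v \cdot \nabla_x$ is skew-symmetric on $\mathbb{T}^3 \times \mathbb{R}^3$, its contribution vanishes, but the commutator $[\partial^{\alpha}_{\beta}, v \cdot \nabla_x] = \sum_i \beta_i \partial^{\alpha+e_i}_{\beta - e_i}$ generates terms of the same total order $|\alpha|+|\beta|$, controlled by $\mathcal{E}(f^{n+1})$ via Cauchy--Schwarz and Young. The decisive coercive contribution is the damping $-(n_{10}+n_{20})\|\partial^{\alpha}_{\beta} f_k^{n+1}\|_{L^2_{x,v}}^2$ coming from the $-f_k^{n+1}$ factor in the iteration.

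The remaining right-hand side splits into four contributions. The intra-species projection term involving $P_k f_k^n$ and the inter-species linear term $L^2_{ij}(f^n)$ are both bounded by $C\|\partial^{\alpha} f^n\|_{L^2_{x,v}} \|\partial^{\alpha}_{\beta} f^{n+1}\|_{L^2_{x,v}}$, using that the basis elements $e_{ki}$ and their $\partial_\beta$-derivatives decay exponentially in $v$. The nonlinear contributions $\Gamma_{kk}(f^n)$ and $\Gamma_{ij}(f^n)$ are handled by Lemma~\ref{nonlin esti1} and Lemma~\ref{nonlin esti}, which apply provided $\mathcal{E}(f^n)$ is sufficiently small; this forces $M_0$ to be chosen below the thresholds of those lemmas. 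Summing over $|\alpha|+|\beta|\leq N$ and absorbing products of the form $\|\partial^{\alpha}_{\beta}f^{n+1}\|^2_{L^2_{x,v}}$ into the damping by Young's inequality yields
\begin{equation*}
\frac{d}{dt}\mathcal{E}(f^{n+1}(t)) \leq C\,\mathcal{E}(f^{n+1}(t)) + C\bigl(\mathcal{E}(f^n(t)) + \mathcal{E}(f^n(t))^2\bigr).
\end{equation*}

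Applying Gronwall's inequality and using the induction hypothesis $\mathcal{E}(f^n(t)) \leq M_0$ on $[0,T_*]$ gives
\begin{equation*}
\mathcal{E}(f^{n+1}(t)) \leq e^{CT_*}\mathcal{E}(0) + C(M_0 + M_0^2)\,T_* e^{CT_*}.
\end{equation*}
Since $\mathcal{E}(0) < M_0/2$, continuity in $T_*$ lets us fix $M_0$ small (to satisfy the smallness constraints of the nonlinear lemmas) and then choose $T_* > 0$ small enough that the right-hand side is strictly less than $M_0$, closing the induction uniformly in $n$. The main technical obstacle is tracking the mixing between the two species in the inter-species contributions $L^2_{ij}$ and $\Gamma_{ij}$, where moments of $f^n_2$ enter the equation for $f^{n+1}_1$ (and vice versa); this is managed by summing the two energy estimates simultaneously so that the total energy $\mathcal{E}(f_1^{n+1},f_2^{n+1})$ controls all cross terms, and by relying on $N \geq 3$ so that Sobolev embedding $H^2 \hookrightarrow L^\infty$ provides the uniform bounds on macroscopic fields required by Lemmas~\ref{macro esti} and~\ref{macro diff}.
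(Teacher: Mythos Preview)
Your proposal is correct and follows essentially the same strategy as the paper: induction on $n$, apply $\partial^\alpha_\beta$ to the iteration, take the $L^2_{x,v}$ inner product with $\partial^\alpha_\beta f_k^{n+1}$, use the damping $-(n_{10}+n_{20})\|\partial^\alpha_\beta f_k^{n+1}\|^2$, estimate the commutator, projection, $L^2_{ij}$, and nonlinear terms via Lemmas~\ref{nonlin esti1} and~\ref{nonlin esti}, sum over $|\alpha|+|\beta|\le N$ and over both species, and close by choosing $M_0$ and $T_*$ small. The only cosmetic difference is that the paper keeps the mixed terms $\sqrt{\mathcal{E}^n}\sqrt{\mathcal{E}^{n+1}}$ and $\mathcal{E}^n\sqrt{\mathcal{E}^{n+1}}$, integrates in $t$, and then applies Young's inequality, whereas you apply Young's inequality first and invoke Gronwall; the two routes are equivalent.
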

\begin{proof} We take $\partial^{\alpha}_{\beta}$ on each side of \eqref{pertf1} and \eqref{pertf2}:
\begin{multline*}
\partial^{\alpha}_{\beta}\partial_t f_1^{n+1}+v\cdot \nabla_x\partial^{\alpha}_{\beta}f_1^{n+1}+\sum_{i=1}^3  \partial^{\alpha+\bar{k}_i}_{\beta-k_i}f_1^{n+1}
=(n_{10}+n_{20})(\partial_{\beta}P_1\partial^{\alpha}f_1^n-\partial^{\alpha}_{\beta}f_1^{n+1})\cr
+\partial^{\alpha}_{\beta}L_{12}^2(f_1^n,f_2^n)+\partial^{\alpha}_{\beta}\Gamma_{11}(f_1^n)+\partial^{\alpha}_{\beta}\Gamma_{12}(f_1^n,f_2^n), 
\end{multline*}
and
\begin{multline*}
\partial^{\alpha}_{\beta}\partial_t f_2^{n+1}+v\cdot \nabla_x\partial^{\alpha}_{\beta}f_2^{n+1}+\sum_{i=1}^3  \partial^{\alpha+\bar{k}_i}_{\beta-k_i}f_2^{n+1}=(n_{10}+n_{20})(\partial_{\beta}P_2\partial^{\alpha}f_2^n-\partial^{\alpha}_{\beta}f_2^{n+1})\cr
+\partial^{\alpha}_{\beta}L_{21}^2(f_1^n,f_2^n)+\partial^{\alpha}_{\beta}\Gamma_{22}(f_2^n)+\partial^{\alpha}_{\beta}\Gamma_{21}(f_1^n,f_2^n),
\end{multline*}
where $k_1=(1,0,0)$, $k_2=(0,1,0)$, $k_3=(0,0,1)$, and $\bar{k}_1=(0,1,0,0)$, $\bar{k}_2=(0,0,1,0)$, $\bar{k}_3=(0,0,0,1)$. We then take the inner product with $\partial^{\alpha}_{\beta}f_1^{n+1}$: 
\begin{align}\label{eqtoest}
\begin{split}
\frac{1}{2}\frac{d}{dt}\|\partial^{\alpha}_{\beta} f_1^{n+1} \|_{L^2_{x,v}}^2&+(n_{10}+n_{20})\|\partial^{\alpha}_{\beta} f_1^{n+1} \|_{L^2_{x,v}}^2 = - \sum_{i=1}^3 \langle \partial^{\alpha+\bar{k}_i}_{\beta-k_i}f_1^{n+1}, \partial^{\alpha}_{\beta} f_1^{n+1} \rangle_{L^2_{x,v}} \cr
&\quad + 
\langle \partial_{\beta}P_1\partial^{\alpha}f_1^n,\partial^{\alpha}_{\beta} f_1^{n+1} \rangle_{L^2_{x,v}}
+\langle \partial^{\alpha}_{\beta}L_{12}^2(f_1^n,f_2^n), \partial^{\alpha}_{\beta} f_1^{n+1} \rangle_{L^2_{x,v}} \cr
&\quad +\langle \partial^{\alpha}_{\beta}\Gamma_{11}(f_1^n),\partial^{\alpha}_{\beta} f_1^{n+1} \rangle_{L^2_{x,v}}+\langle \partial^{\alpha}_{\beta}\Gamma_{12}(f_1^n,f_2^n),\partial^{\alpha}_{\beta} f_1^{n+1} \rangle_{L^2_{x,v}} \cr
&\quad = I_1+I_2+I_3+I_4+I_5.
\end{split}
\end{align}
Applying the H\"{o}lder inequality on $I_1$, we have 
\begin{align*}
I_1=\sum_{i=1}^3 \langle \partial^{\alpha+\bar{k}_i}_{\beta-k_i}f_1^{n+1}, \partial^{\alpha}_{\beta} f_1^{n+1} \rangle_{L^2_{x,v}} &\leq \sum_{i=1}^3\|\partial^{\alpha+\bar{k}_i}_{\beta-k_i}f_1^{n+1}\|_{L^2_{x,v}}\|\partial^{\alpha}_{\beta} f_1^{n+1} \|_{L^2_{x,v}} \cr
& \leq \sum_{|\alpha|+|\beta|\leq N } \|\partial^{\alpha}_{\beta} f_1^{n+1} \|_{L^2_{x,v}}^2.
\end{align*}
Since $\partial_{\beta} e_{1i}$ and $\partial_{\beta} e_{2i}$ have exponential decay, 
\begin{align*}
\| \partial_{\beta}P_1\partial^{\alpha}f_1^{n} \|_{L^2_{x,v}} \leq C_{\beta}\| \partial^{\alpha}f_1^{n} \|_{L^2_{x,v}}.
\end{align*}
Thus Young's inequality implies 
\begin{align*}
I_2=\langle \partial_{\beta}P_1\partial^{\alpha}f_1^n,\partial^{\alpha}_{\beta} f_1^{n+1} \rangle_{L^2_{x,v}} &\leq   C_{\beta} \| \partial^{\alpha}f_1^n\|_{L^2_{x,v}}^2 + C \| \partial^{\alpha}_{\beta} f_1^{n+1} \|_{L^2_{x,v}}^2. 
\end{align*}
To estimate $I_3$, we take $\partial^{\alpha}_{\beta}$ on  $L_{12}^2$:
\begin{multline*}
\partial^{\alpha}_{\beta}L_{12}^2(f_1,f_2) = n_{20} \bigg[(1-\delta) \sum_{2\leq i \leq 4}  \left(\sqrt{\frac{n_{10}}{n_{20}}}\sqrt{\frac{m_1}{m_2}}\langle \partial^{\alpha}f_2, e_{2i} \rangle_{L^2_v}-\langle \partial^{\alpha}f_1, e_{1i} \rangle_{L^2_v}\right)\partial_{\beta}e_{1i} \cr
+(1-\omega) \left(\sqrt{\frac{n_{10}}{n_{20}}}\langle \partial^{\alpha}f_2, e_{25} \rangle_{L^2_v}-\langle \partial^{\alpha}f_1, e_{15} \rangle_{L^2_v}\right)\partial_{\beta}e_{15} \bigg],
\end{multline*}
and apply the H\"{o}lder inequality: 
\begin{align*}
I_3=\langle \partial^{\alpha}_{\beta}L_{12}^2(f_1^n,f_2^n), \partial^{\alpha}_{\beta} f_1^{n+1} \rangle_{L^2_{x,v}} &\leq C\int_{\mathbb{T}^3} \left(\| \partial^{\alpha}f_2^n\|_{L^2_v} + C\| \partial^{\alpha}f_1^n\|_{L^2_v}\right) \| \partial^{\alpha}_{\beta}f_1^{n+1}\|_{L^2_v} dx \cr
&\leq C\| \partial^{\alpha}(f_1^n,f_2^n)\|_{L^2_{x,v}}  \| \partial^{\alpha}_{\beta}f_1^{n+1}\|_{L^2_{x,v}}.
\end{align*}
Since $I_4$ and $I_5$ are similar, we only consider $I_5$. Applying Lemma \ref{nonlin esti}, we have
\begin{align*}
I_5=\langle \partial^{\alpha}_{\beta}\Gamma_{12}(f_1^n,f_2^n),\partial^{\alpha}_{\beta} f_1^{n+1} \rangle_{L^2_{x,v}} &\leq C\sum_{|\alpha_1|+|\alpha_2|\leq |\alpha|} \int_{\mathbb{T}^3}\|\partial^{\alpha_1}(f_1^n,f_2^n)\|_{L^2_v} \cr
&\quad\times \|\partial^{\alpha_2}(f_1^n,f_2^n)\|_{L^2_v}\| \partial^{\alpha}_{\beta} f_1^{n+1} \|_{L^2_v}dx.
\end{align*}
Without loss of generality, we assume that $|\alpha_1|\leq |\alpha_2|$. Then the Sobolev embedding $H^2 \subset\subset L^{\infty}$ implies 
\begin{align*}
I_5&=\langle \partial^{\alpha}_{\beta}\Gamma_{12}(f_1^n,f_2^n),\partial^{\alpha}_{\beta} f_1^{n+1} \rangle_{L^2_{x,v}} \cr
&\leq C\bigg(\sum_{|\alpha_1|\leq |\alpha|} \|\partial^{\alpha_1}(f_1^n,f_2^n)\|_{L^2_{x,v}}\bigg)^2 \| \partial^{\alpha}_{\beta} f_1^{n+1} \|_{L^2_{x,v}}.
\end{align*}
Combining the estimate from $I_1$ to $I_5$, and taking $\sum_{|\alpha|+|\beta|\leq N}$ on \eqref{eqtoest}, we have
\begin{align}\label{est_dt1}
\begin{split}
\frac{1}{2}\sum_{|\alpha|+|\beta|\leq N }&\frac{d}{dt}\|\partial^{\alpha}_{\beta} f_1^{n+1} \|_{L^2_{x,v}}^2+(n_{10}+n_{20})\sum_{|\alpha|+|\beta|\leq N }\|\partial^{\alpha}_{\beta} f_1^{n+1} \|_{L^2_{x,v}}^2 \cr
&\leq C\mathcal{E}^n(t) + C \mathcal{E}^{n+1}(t) + C\sqrt{\mathcal{E}^n(t)}\sqrt{\mathcal{E}^{n+1}(t)}  +C\mathcal{E}^n(t)\sqrt{\mathcal{E}^{n+1}(t)}.
\end{split}
\end{align}
Similarly,
\begin{align}\label{est_dt2}
\begin{split}
\frac{1}{2}\sum_{|\alpha|+|\beta|\leq N }&\frac{d}{dt}\|\partial^{\alpha}_{\beta} f_2^{n+1} \|_{L^2_{x,v}}^2+\sum_{|\alpha|+|\beta|\leq N }(n_{10}+n_{20})\|\partial^{\alpha}_{\beta} f_2^{n+1} \|_{L^2_{x,v}}^2 \cr 
&\leq C\mathcal{E}^n(t) + C \mathcal{E}^{n+1}(t) + C\sqrt{\mathcal{E}^n(t)}\sqrt{\mathcal{E}^{n+1}(t)}  +C\mathcal{E}^n(t)\sqrt{\mathcal{E}^{n+1}(t)}.
\end{split}
\end{align}
Combining \eqref{est_dt1} and \eqref{est_dt2} yields 
\begin{align*}
\frac{1}{2}\frac{d}{dt}\mathcal{E}^{n+1}(t)+(n_{10}+n_{20})\mathcal{E}^{n+1}(t) &\leq C
\mathcal{E}^n(t) + C \mathcal{E}^{n+1}(t)   \cr
& \quad + C\sqrt{\mathcal{E}^n(t)}\sqrt{\mathcal{E}^{n+1}(t)}  +C\mathcal{E}^n(t)\sqrt{\mathcal{E}^{n+1}(t)}.
\end{align*}
We integrate in time to get  
\begin{align}\label{Eps_n+1}
\begin{split}
\mathcal{E}^{n+1}&(t)\leq \mathcal{E}^{n+1}(0)\cr
&+\int_0^t \left(C\mathcal{E}^n(s) +C \mathcal{E}^{n+1}(s)+ C\sqrt{\mathcal{E}^n(t)}\sqrt{\mathcal{E}^{n+1}(t)} +C\mathcal{E}^n(t)\sqrt{\mathcal{E}^{n+1}(t)}\right)ds.
\end{split}
\end{align}
We now apply an induction argument. We have $\mathcal{E}^0(0)<\frac{M_0}{2}$ from the assumption. Assume we have 
\begin{align*}
\sup_{0\leq t \leq T_*}\mathcal{E}^{n}(t) \leq M_0, \quad \mathcal{E}^{n+1}(0)\leq M_0/2.
\end{align*}
Then, from \eqref{Eps_n+1}, we see that 
\begin{align*}
\sup_{0\leq t \leq T_*}\mathcal{E}^{n+1}(t)&\leq \frac{M_0}{2}+CT_*M_0+CT_*\sup_{0\leq t \leq T_*}\mathcal{E}^{n+1}(t)  \cr
&\quad + CT_*\sqrt{M_0}\sqrt{\sup_{0\leq t \leq T_*}\mathcal{E}^{n+1}(t)} +CT_*M_0\sqrt{\sup_{0\leq t \leq T_*}\mathcal{E}^{n+1}(t)}.
\end{align*}
By using Young's inequality, we have
\begin{align*}
(1-3CT_*)\sup_{0\leq t \leq T_*}\mathcal{E}_1^{n+1}(t)&\leq \frac{M_0}{2} +
2 C T_*M_0  +CT_*M_0^2 .
\end{align*}
Therefore, for sufficiently small $T_*$ and $M_0>0$, we can derive
\begin{align*}
\sup_{0\leq t \leq T_*}\mathcal{E}^{n+1}(t)&\leq M_0 .
\end{align*}
This completes the proof.
\end{proof}

\section{Coercivity estimate}
We write the macroscopic part $P(f_1,f_2)$ of the distribution function $(f_1,f_2)$ as 
\begin{align*}
P(f_1,f_2) &= a_1(x,t)\left(\sqrt{\mu_1},0\right)+a_2(x,t)\left(0,\sqrt{\mu_2}\right) +b(x,t)\cdot v \left(m_1\sqrt{\mu_1},m_2\sqrt{\mu_2} \right) \cr
&\quad + c(x,t)|v|^2\left(m_1\sqrt{\mu_1},m_2\sqrt{\mu_2}\right),
\end{align*}
where
\begin{align}\label{abc}
\begin{split}
a_k(x,t) &= \frac{1}{n_{k0}} \int_{\mathbb{R}^3} f_k\sqrt{\mu_k} dv \cr
&\quad- \frac{1}{2n_{10}+2n_{20}}\left(\int_{\mathbb{R}^3} f_1(m_1|v|^2-3)\sqrt{\mu_1} dv+\int_{\mathbb{R}^3} f_2(m_2|v|^2-3)\sqrt{\mu_2} dv\right), \cr
b(x,t) &= \frac{1}{m_1n_{10}+m_2n_{20}}\left(\int_{\mathbb{R}^3} f_1m_1v\sqrt{\mu_1} dv +\int_{\mathbb{R}^3} f_2m_2v\sqrt{\mu_2} dv\right), \cr
c(x,t) &= \frac{1}{6n_{10}+6n_{20}}\left(\int_{\mathbb{R}^3} f_1(m_1|v|^2-3)\sqrt{\mu_1} dv+\int_{\mathbb{R}^3} f_2(m_2|v|^2-3)\sqrt{\mu_2} dv\right),
\end{split}
\end{align}
for $k=1,2$.
We substitute 
\[
(f_1,f_2)=(I-P)(f_1,f_2)+P(f_1,f_2),
\] 
into \eqref{pertff} to get 
\begin{align}\label{split}
\begin{split}
\{\partial_t +v\cdot \nabla_x\} P(f_1,f_2)&=-\{\partial_t +v\cdot \nabla_x-L\} (I-P)(f_1,f_2)  \cr
&\quad+(\Gamma_{11}(f_1)+\Gamma_{12}(f_1,f_2),\Gamma_{22}(f_2)+\Gamma_{21}(f_1,f_2)).
\end{split}
\end{align}
We write L.H.S. of \eqref{split} in the following form:
\begin{multline*}
\bigg\{\left(\partial_ta_1+v\cdot \nabla_x a_1\right)(\sqrt{\mu_1},0)+\left(\partial_ta_2+v\cdot \nabla_x a_2\right)(0,\sqrt{\mu_2}) \cr
+v\cdot\partial_t b(m_1\sqrt{\mu_1},m_2\sqrt{\mu_2}) + \sum_{1\leq i<j \leq 3}v_iv_j(\partial_{x_i}b_j+\partial_{x_j}b_i)(m_1\sqrt{\mu_1},m_2\sqrt{\mu_2}) \cr
+ \sum_{1\leq i \leq 3 }(\partial_{x_i}b_i+\partial_tc)v_i^2(m_1\sqrt{\mu_1},m_2\sqrt{\mu_2})  + |v|^2v\cdot \nabla_x c (m_1\sqrt{\mu_1},m_2\sqrt{\mu_2}) \bigg\},
\end{multline*}
as a linear expansion with respect to the following $17$ basis:
\begin{align}\label{basis17}
\begin{split}
\{(\sqrt{\mu_1},0),(0,\sqrt{\mu_2}),v(\sqrt{\mu_1},0)&,v(0,\sqrt{\mu_2}),\cr
&v_iv_j(m_1\sqrt{\mu_1},m_2\sqrt{\mu_2}),v|v|^2(m_1\sqrt{\mu_1},m_2\sqrt{\mu_2})\}.
\end{split}
\end{align}
Therefore, comparing both sides of \eqref{split}, we obtain the following system:
\begin{align*}
\partial_t a_1 &= l_{a1}+h_{a1}, \cr
\partial_t a_2 &= l_{a2}+h_{a2}, \cr
\partial_{x_i}a_1 + m_1\partial_t b_i &= l_{b1i} + h_{b1i}, \cr
\partial_{x_i}a_2 + m_2\partial_t b_i &= l_{b2i} + h_{b2i}, \cr
\partial_{x_i}b_j+\partial_{x_j}b_i&= l_{bbi}+ h_{bbi}, \quad (i\neq j ) \cr
\partial_{x_i}b_i+\partial_tc &= l_{bci}+ h_{bci}, \cr
\partial_{x_i}c &=  l_{ci} + h_{ci},
\end{align*}
where $(l_{a1},l_{a2},l_{b1i},l_{b2i},l_{bbi},l_{bci},l_{ci})$, and $(h_{a1},h_{a2},h_{b1i},h_{b2i},h_{bbi},h_{bci},h_{ci})$ are the coefficients corresponding to the expansion of $l$ and $h$:
\begin{align*}
&l(f_1,f_2)= -\{\partial_t +v\cdot \nabla_x-L\} (I-P)(f_1,f_2), \cr
&h(f_1,f_2)=(\Gamma_{11}(f_1)+\Gamma_{12}(f_1,f_2),\Gamma_{22}(f_2)+\Gamma_{21}(f_1,f_2)), 
\end{align*}
with respect to \eqref{basis17}. For brevity, we denote
\begin{align*}
\tilde{l} &= l_{a1}+l_{a2}+\sum_{i=1}^3\left(l_{b1i}+l_{b2i}+l_{bbi}+l_{bci}+l_{ci}\right) \cr
\tilde{h} &= h_{a1}+h_{a2}+\sum_{i=1}^3\left(h_{b1i}+h_{b2i}+h_{bbi}+h_{bci}+h_{ci}\right).
\end{align*}

\begin{lemma} We have 
\begin{align*}
\int_{\mathbb{T}^3}a_1(x,t)dx=\int_{\mathbb{T}^3}a_2(x,t)dx=\int_{\mathbb{T}^3}b(x,t)dx=\int_{\mathbb{T}^3}c(x,t)dx=0.
\end{align*}
\end{lemma}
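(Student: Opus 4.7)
The plan is to simply integrate each of the macroscopic coefficients $a_1, a_2, b, c$ from \eqref{abc} over $\mathbb{T}^3$, swap the order of integration, and then recognize that each resulting expression is a linear combination of the conserved quantities listed in \eqref{conservf}. Since those conserved quantities vanish under the assumption on the initial data (which matches the macroscopic quantities of the global equilibrium, so $F_{k0}-\mu_k$ integrates to zero against $1$, $m_k v$, $m_k|v|^2$), the integrals we want must also vanish.

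More concretely, for $b$ I would integrate the definition in \eqref{abc} over $\mathbb{T}^3$ and use Fubini to pull the $dx$ integral inside; this gives $\int_{\mathbb{T}^3} b\,dx = \frac{1}{m_1 n_{10}+m_2 n_{20}}\int_{\mathbb{T}^3\times\mathbb{R}^3}(m_1 v \sqrt{\mu_1} f_1 + m_2 v\sqrt{\mu_2} f_2)\,dvdx$, which is zero by the second identity in \eqref{conservf}. For $a_k$ and $c$, I would first observe from the first identity in \eqref{conservf} that $\int_{\mathbb{T}^3\times\mathbb{R}^3}\sqrt{\mu_k}f_k\,dvdx=0$ for each $k=1,2$ separately, so any $(m_k|v|^2-3)$ can be replaced by $m_k|v|^2$ after integration in $x$. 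Then $\int c\,dx$ becomes a multiple of $\int (m_1|v|^2\sqrt{\mu_1}f_1 + m_2|v|^2\sqrt{\mu_2}f_2)\,dvdx$, which vanishes by the third identity in \eqref{conservf}. For $\int a_k\,dx$ the first term vanishes by the mass conservation of species $k$, and the second term vanishes by the same argument just used for $c$.

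There is no real obstacle here; the lemma is essentially a bookkeeping consequence of \eqref{conservf} together with the compatibility of the initial data with the global equilibrium. The only point worth being careful about is to use mass conservation species-by-species (first line of \eqref{conservf}) in order to legitimately discard the constant $-3$ appearing in $m_k|v|^2-3$ when reducing the energy-type term to the combined energy conservation (third line of \eqref{conservf}).
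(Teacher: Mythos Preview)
Your proposal is correct and follows exactly the approach the paper takes: the paper's own proof is a one-line remark that the result follows from the conservation laws \eqref{conservf} and the definitions in \eqref{abc}, and your write-up simply unpacks that sentence with the explicit Fubini-and-identify-conserved-quantities argument. The only additional detail you supply---using species-by-species mass conservation to drop the $-3$ in $m_k|v|^2-3$---is indeed the right bookkeeping step.
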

\begin{proof}
This follows from the conservation laws \eqref{conservf} and the definition of $a_1$, $a_2$, $b$, and $c$ in \eqref{abc}.
\end{proof}

\begin{lemma}\emph{\cite{Guo VMB}}\label{abc esti} Let $ 0\leq |\alpha| \leq N $ with $N\geq 3 $, then we have 
\begin{multline*}
\| \partial^{\alpha}a_1 \|_{L^2_x}+\| \partial^{\alpha}a_2 \|_{L^2_x}+\|\partial^{\alpha}b \|_{L^2_x}+\|\partial^{\alpha}c \|_{L^2_x}
\leq \sum_{|\alpha|\leq N-1} \left(\|\partial^{\alpha}\tilde{l} \|_{L^2_x}+\|\partial^{\alpha}\tilde{h} \|_{L^2_x}\right).
\end{multline*}
\end{lemma}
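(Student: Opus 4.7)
This is the standard macroscopic coercivity estimate in the spirit of Guo's nonlinear energy method, adapted to the two-species structure. I split the argument by the order of $|\alpha|$.

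For $|\alpha|=0$, the preceding lemma provides the zero-mean property $\int_{\mathbb{T}^3} a_k\,dx = \int_{\mathbb{T}^3} b\,dx = \int_{\mathbb{T}^3} c\,dx = 0$ (a direct consequence of the conservation laws \eqref{conservf}). The Poincaré inequality on the torus then yields
\begin{equation*}
\|a_k\|_{L^2_x} + \|b\|_{L^2_x} + \|c\|_{L^2_x} \;\leq\; C\bigl(\|\nabla_x a_k\|_{L^2_x} + \|\nabla_x b\|_{L^2_x} + \|\nabla_x c\|_{L^2_x}\bigr),
\end{equation*}
so the zeroth-order term is reduced to the $|\alpha|=1$ case treated below.

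For $1\leq|\alpha|\leq N$, I write $\alpha = \alpha' + e_\zeta$ with $|\alpha'|=|\alpha|-1$ and $\zeta\in\{t,x_1,x_2,x_3\}$, and apply $\partial^{\alpha'}$ to the appropriate relation in the macroscopic system extracted from \eqref{split}. I proceed in a triangular order: first $c$, then $b$, and finally $(a_1,a_2)$. For $c$, the relation $\partial_{x_i} c = l_{ci} + h_{ci}$ directly bounds $\|\partial^\alpha c\|_{L^2_x}$ by $\sum_{|\alpha'|\leq N-1}(\|\partial^{\alpha'}\tilde{l}\|_{L^2_x}+\|\partial^{\alpha'}\tilde{h}\|_{L^2_x})$ whenever $\alpha$ has a nonzero spatial component; the purely temporal case is handled via $\partial_t c = -\partial_{x_i}b_i + l_{bci}+h_{bci}$, trading a time derivative of $c$ for a spatial derivative of $b$. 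For $b$, combining $\partial_{x_i} b_j + \partial_{x_j} b_i = l_{bbi}+h_{bbi}$ ($i\ne j$) with $\partial_{x_i} b_i + \partial_t c = l_{bci}+h_{bci}$, differentiating once more and summing in $i$, produces an elliptic identity of the form $-\Delta b_j = (\text{div of }\tilde{l}+\tilde{h}) + (\text{lower-order terms already controlled})$; integration by parts in $L^2_x$ then delivers the desired bound.

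For $a_k$, the coupled pair of momentum-balance relations
\begin{align*}
\partial_{x_i} a_1 + m_1 \partial_t b_i &= l_{b1i}+h_{b1i},\\
\partial_{x_i} a_2 + m_2 \partial_t b_i &= l_{b2i}+h_{b2i},
\end{align*}
is decoupled by linear combination: the weighted difference $m_2\partial_{x_i} a_1 - m_1\partial_{x_i} a_2$ equals $m_2(l_{b1i}+h_{b1i})-m_1(l_{b2i}+h_{b2i})$ directly, which controls spatial derivatives of $m_2 a_1 - m_1 a_2$; the complementary combination, together with the already-controlled $\partial_t b$, yields spatial derivatives of $a_1+a_2$. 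Combined with the pure time identities $\partial_t a_k = l_{ak}+h_{ak}$, this closes the estimate for both $a_1$ and $a_2$.

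\textbf{Main obstacle.} The principal technical point specific to the multi-component setting is the two-species coupling of $(a_1,a_2,b)$ through the momentum-balance equations with distinct masses $m_1\ne m_2$; unlike the single-species case, no single equation isolates an individual $a_k$. The linear decoupling sketched above is the essential adaptation of Guo's argument, after which all remaining manipulations reduce to standard elliptic regularity on $\mathbb{T}^3$.
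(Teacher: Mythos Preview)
Your sketch is correct and faithfully reproduces the argument the paper defers to: the paper gives no proof of its own here, simply citing \cite[p.~620, Proof of Theorem 3]{Guo VMB}, and what you outline is precisely Guo's macroscopic-coercivity scheme (Poincar\'e at order zero, then the triangular reduction $c\to b\to a$ via the micro--macro system, with elliptic regularity for $b$).

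One remark on your ``main obstacle'': the two-species coupling you single out is not actually new relative to the cited reference. Guo's Vlasov--Maxwell--Boltzmann system already has two species with separate density coefficients $a_\pm$ sharing a common $b$ and $c$, and the momentum relations there have exactly the same structure (with $m_1=m_2=1$). Your linear decoupling via the weighted difference $m_2 a_1 - m_1 a_2$ is the direct analogue of Guo's $a_+ - a_-$ step; the different masses only change the coefficients. So the paper's bare citation is justified---no genuine adaptation is needed.

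A small imprecision: when you invoke ``the already-controlled $\partial_t b$'' to close the estimate for $\nabla_x(a_1+a_2)$, note that the elliptic identity for $b$ controls $\partial^\alpha b$ only when $\alpha$ has at least one spatial component. For $|\alpha|\geq 2$ purely spatial on $a_1+a_2$ this suffices (you need $\nabla_x^{|\alpha|-1}\partial_t b$, which has a spatial factor), but at $|\alpha|=1$ you would need the bare $\partial_t b$, which is not yet available. This is harmless: one more application of Poincar\'e reduces the order-one spatial estimate for $a_k$ to order two, where your argument closes cleanly with $\|L\|_{\leq 1}\leq \|L\|_{\leq N-1}$.
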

\begin{proof}
The proof can be found in \cite[page 620, Proof of Theorem 3]{Guo VMB}. We omit it.
\end{proof}
\begin{lemma}\label{lh} For sufficiently small energy norm $\mathcal{E}(t)$, we have 
\begin{align*}
&(1) \ \sum_{|\alpha|\leq N-1} \|\partial^{\alpha}\tilde{l}\|_{L^2_{x,v}} \leq C \sum_{|\alpha|\leq N}\|(I-P)\partial^{\alpha}(f_1,f_2)\|_{L^2_{x,v}}, \cr 
&(2) \ \sum_{|\alpha|\leq N} \|\partial^{\alpha}\tilde{h}\|_{L^2_{x,v}} \leq C\sqrt{M} \sum_{|\alpha|\leq N}\|\partial^{\alpha}(f_1,f_2)\|_{L^2_{x,v}}.
\end{align*}
\end{lemma}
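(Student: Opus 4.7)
I would prove both (1) and (2) by the same two-step template. Each of the seventeen scalar coefficients $l_{\star}(x,t)$ and $h_{\star}(x,t)$ in the expansion of $l(f_1,f_2)$ and $h(f_1,f_2)$ with respect to the basis \eqref{basis17} can be extracted via an $L^2_v$ inner product against a corresponding dual vector in the $17$-dimensional span. Since every basis element is a polynomial in $v$ times $\sqrt{\mu_k}$, each dual vector has finite $L^2_v$ norm, and because it depends only on $v$ it commutes with $\partial^{\alpha}$. Cauchy--Schwarz then yields pointwise (in $x,t$) bounds
\[
|\partial^{\alpha}l_{\star}(x,t)|\leq C\|\partial^{\alpha}l(x,\cdot,t)\|_{L^2_v},\qquad |\partial^{\alpha}h_{\star}(x,t)|\leq C\|\partial^{\alpha}h(x,\cdot,t)\|_{L^2_v},
\]
so that after integrating in $x$ and summing in $\alpha$, both parts reduce to controlling $\partial^{\alpha}l$ and $\partial^{\alpha}h$ in $L^2_{x,v}$.

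For (1), one writes
\[
\partial^{\alpha}l = -(I-P)\partial_t\partial^{\alpha}(f_1,f_2) - v\cdot\nabla_x(I-P)\partial^{\alpha}(f_1,f_2) + L(I-P)\partial^{\alpha}(f_1,f_2),
\]
using that $L$ and $I-P$ act only in $v$ and therefore commute with $\partial^{\alpha}$. The operator $L$ is bounded on $L^2_v$ uniformly in $(x,t)$, since $L_{kk}$, $L^{1}_{ij}$ and $L^{2}_{ij}$ are all linear combinations of the bounded projections $P_k$ and of rank-one operators of the form $\langle \cdot,e_{ki}\rangle_{L^2_v}e_{kj}$; consequently $\|L(I-P)g\|_{L^2_v}\leq C\|(I-P)g\|_{L^2_v}$. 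In the $v\cdot\nabla_x$ term, the extra factor of $v$ is absorbed by the Gaussian decay of the dual basis element when one takes the $L^2_v$ inner product. Combining, $|\partial^{\alpha}l_{\star}(x,t)|\leq C\|(I-P)\partial^{\alpha'}(f_1,f_2)(x,\cdot,t)\|_{L^2_v}$ for some $|\alpha'|\leq |\alpha|+1\leq N$, which integrates to the claim.

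For (2), the intra-species nonlinear contribution is handled by Lemma \ref{nonlin esti1} and the inter-species by Lemma \ref{nonlin esti}. Setting the test function $g$ in those lemmas equal to the relevant dual basis vector (with $\|g\|_{L^2_v}$ a harmless constant) gives
\[
|\partial^{\alpha}h_{\star}(x,t)|\leq C\sum_{|\alpha_1|+|\alpha_2|\leq |\alpha|}\|\partial^{\alpha_1}(f_1,f_2)(x,\cdot,t)\|_{L^2_v}\|\partial^{\alpha_2}(f_1,f_2)(x,\cdot,t)\|_{L^2_v}.
\]
Without loss of generality $|\alpha_1|\leq |\alpha|/2\leq N/2$, so the assumption $N\geq 3$ ensures $|\alpha_1|+2\leq N$; the Sobolev embedding $H^2(\mathbb{T}^3)\hookrightarrow L^{\infty}(\mathbb{T}^3)$ then yields $\|\partial^{\alpha_1}(f_1,f_2)\|_{L^{\infty}_x L^2_v}\leq C\sqrt{\mathcal{E}(t)}\leq C\sqrt{M}$, and placing the other factor in $L^2_x L^2_v$ produces the stated bound after taking $L^2_x$ norms and summing in $\alpha$.

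The only genuine subtlety is derivative bookkeeping: the $\partial_t$ and $v\cdot\nabla_x$ appearing inside $l$ effectively cost one extra derivative, which is exactly why the left-hand sum in (1) is restricted to $|\alpha|\leq N-1$ while the right-hand sum reaches $|\alpha|\leq N$. Everything else is a routine combination of Cauchy--Schwarz, commutativity of $\partial^{\alpha}$ with velocity-only operators, the Sobolev embedding, and the previously established Lemmas \ref{nonlin esti1}--\ref{nonlin esti}.
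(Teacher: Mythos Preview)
Your proposal is correct and follows essentially the same approach as the paper. For (1) the paper simply defers to \cite[Lemma~7]{Guo VMB}, and your sketch is precisely the standard argument from that reference; for (2) the paper likewise extracts the coefficients by pairing $h$ against an (orthonormalized) dual basis for \eqref{basis17}, applies Lemmas~\ref{nonlin esti1} and~\ref{nonlin esti} with that basis element as the test function $g$, and then uses the Sobolev embedding $H^2(\mathbb{T}^3)\hookrightarrow L^\infty$ on the lower-order factor to produce the $\sqrt{\mathcal{E}(t)}$---exactly as you outline.
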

\begin{proof}
(1) The proof can be found in \cite[page 616, Lemma 7]{Guo VMB}. We omit it. \newline
(2) Let us define $\{e_i^*\}_{i=1}^{17}$ be the orthonormal basis corresponding to the basis \eqref{basis17}. Then we can write 
\begin{align*}
e_i^* = \sum_{j=1}^{17} C_{ij}e_j, \qquad  h(f_1,f_2) =\sum_{i=1}^{17}  \langle h, e_i^* \rangle_{L^2_v} e_i^*,
\end{align*}
so that
\begin{align*}
\langle h, e_n^* \rangle_{L^2_v} = \sum_{1\leq i,j \leq 17}C_{ij}C_{ni}\langle h, e_i^* \rangle_{L^2_v},
\end{align*}
for $n=1,\cdots,17$. For the estimate of $h$, we compute 
\begin{align*}
\bigg\| \int \partial^{\alpha}h(f_1,f_2) e_i^* dv \bigg\|_{L^2_x} &\leq \bigg\| \int \partial^{\alpha}\Gamma_{11}(f_1) (|v|^k\sqrt{\mu_1}) dv \bigg\|_{L^2_x} + \bigg\| \int \partial^{\alpha}\Gamma_{12}(f_1,f_2) (|v|^k\sqrt{\mu_1}) dv \bigg\|_{L^2_x} \cr
&+ \bigg\| \int \partial^{\alpha}\Gamma_{22}(f_2) (|v|^k\sqrt{\mu_2}) dv \bigg\|_{L^2_x} + \bigg\| \int \partial^{\alpha}\Gamma_{21}(f_1,f_2) (|v|^k\sqrt{\mu_2}) dv \bigg\|_{L^2_x},
\end{align*}
for $k=0,1,2,3$. For sufficiently small $\mathcal{E}(t)$, by Lemma \ref{nonlin esti1}, we have 
\begin{align*}
\bigg\| \int \partial^{\alpha}\Gamma_{mm}(f_m) |v|^k\sqrt{\mu_m} dv \bigg\|_{L^2_x} \leq C\sum_{|\alpha_1|+|\alpha_2|\leq |\alpha|}\bigg\| \|\partial^{\alpha_1}f_m\|_{L^2_v}\|\partial^{\alpha_2}f_m\|_{L^2_v}\bigg\|_{L^2_x}. 
\end{align*}
Similarly, we have from Lemma \ref{nonlin esti}
\begin{align*}
\bigg\| \int \partial^{\alpha}\Gamma_{lm}(f_l,f_m) |v|^k\sqrt{\mu_l} dv \bigg\|_{L^2_x}
\leq C\sum_{|\alpha_1|+|\alpha_2|\leq |\alpha|}\bigg\| \|\partial^{\alpha_1}(f_l,f_m)\|_{L^2_v}\|\partial^{\alpha_2}(f_l,f_m)\|_{L^2_v}\bigg\|_{L^2_x},
\end{align*}
for $l\neq m$. Without loss of generality, we assume that $|\alpha_1|\leq|\alpha_2|$ and apply the Sobolev embedding $H^2 \subset\subset L^{\infty}$ to obtain
\begin{align*}
\sum_{|\alpha|\leq N} \|\partial^{\alpha}\tilde{h}\|_{L^2_{x,v}} &\leq C \sum_{|\alpha_1|\leq |\alpha_2|}\sup_{x\in\mathbb{T}^3}\|\partial^{\alpha_1}(f_1,f_2)\|_{L^2_v} \sum_{|\alpha_2|\leq N} \|\partial^{\alpha_2}(f_1,f_2)\|_{L^2_{x,v}}\cr
&\leq C\sqrt{\mathcal{E}(t)}\sum_{|\alpha|\leq N}\|\partial^{\alpha}(f_1,f_2)\|_{L^2_{x,v}},
\end{align*}
which gives desired result.
\end{proof}
We are now ready to derive the full coercivity estimate. By Lemma \ref{abc esti}, we have 
\begin{align*}
\sum_{|\alpha|\leq N} \| \partial^{\alpha} P(f_1,f_2) \|_{L^2_{x,v}}^2 &\leq  \sum_{|\alpha|\leq N}\left( \| \partial^{\alpha}a_1 \|_{L^2_x}^2+\| \partial^{\alpha}a_2 \|_{L^2_x}^2+\|\partial^{\alpha}b \|_{L^2_x}^2+\|\partial^{\alpha}c \|_{L^2_x}^2\right)  \cr
&\leq \sum_{|\alpha|\leq N-1} \left(\|\partial^{\alpha}\tilde{l} \|_{L^2_x}^2+\|\partial^{\alpha}\tilde{h} \|_{L^2_x}^2\right).
\end{align*}
We then apply Lemma \ref{lh} to get
\begin{align*}
\sum_{|\alpha|\leq N} \| &\partial^{\alpha} P(f_1,f_2) \|_{L^2_{x,v}}^2 \cr
&\leq  C \sum_{|\alpha|\leq N}\|(I-P)\partial^{\alpha}(f_1,f_2)\|_{L^2_{x,v}}^2 + C\sqrt{M} \sum_{|\alpha|\leq N}\|\partial^{\alpha}(f_1,f_2)\|_{L^2_{x,v}}^2.
\end{align*}
Adding $\displaystyle{\sum_{|\alpha|\leq N}\|(I-P)\partial^{\alpha}(f_1,f_2)\|_{L^2_x}^2}$ on each side, we obtain 
\begin{align*}
\sum_{|\alpha|\leq N} \| \partial^{\alpha}(f_1,f_2) \|_{L^2_{x,v}}^2 \leq  \frac{C+1}{1-C\sqrt{M}} \sum_{|\alpha|\leq N}\|(I-P)(\partial^{\alpha}(f_1,f_2))\|_{L^2_{x,v}}^2.
\end{align*}
Combining it with the estimate in Proposition \ref{dissipation}, we derive the following full coercivity estimate 
\begin{align}\label{full coer}
\langle L\partial^{\alpha}(f_1,f_2),\partial^{\alpha}(f_1,f_2)\rangle_{L^2_{x,v}} \leq - \eta \min\left\{(1-\delta),(1-\omega) \right\}\sum_{|\alpha|\leq N} \| \partial^{\alpha}(f_1,f_2) \|_{L^2_{x,v}}^2,
\end{align}
when $\mathcal{E}(t)$ is sufficiently small.

\section{Global existence}
In this section, we extend the local-in-time solution to the global one by establishing a uniform energy estimate. Let $(f_1,f_2)$ be the classical local-in-time solution constructed in Theorem \ref{theo_loc_ex}. 
We take $\partial^{\alpha}$ on \eqref{pertf1} and take inner product with $\partial^{\alpha}f_1$ in $L^2_{x,v}$ to have
\begin{align}\label{f11}
\begin{split}
\frac{1}{2}\frac{d}{dt} \| \partial^{\alpha}f_1 \|_{L^2_{x,v}}^2&=\langle \partial^{\alpha}L_{11}(f_1),\partial^{\alpha}f_1 \rangle_{L^2_{x,v}}+ \langle \partial^{\alpha}L_{12}(f_1,f_2),\partial^{\alpha}f_1 \rangle_{L^2_{x,v}} \cr
&\quad +\langle \partial^{\alpha}f_1,\partial^{\alpha}(\Gamma_{11}+\Gamma_{12}) \rangle_{L^2_{x,v}} .
\end{split}
\end{align}
Similarly, we get from \eqref{pertf2} that
\begin{align}\label{f22}
\begin{split}
\frac{1}{2}\frac{d}{dt} \| \partial^{\alpha}f_2 \|_{L^2_{x,v}}^2&=\langle \partial^{\alpha}L_{22}(f_2),\partial^{\alpha}f_2 \rangle_{L^2_{x,v}}+ \langle \partial^{\alpha}L_{21}(f_1,f_2),\partial^{\alpha}f_2 \rangle_{L^2_{x,v}} \cr
&\quad +\langle \partial^{\alpha}f_2,\partial^{\alpha}(\Gamma_{22}+\Gamma_{21}) \rangle_{L^2_{x,v}} .
\end{split}
\end{align}
Combining \eqref{f11} and \eqref{f22} yields
\begin{align*}
\sum_{k=1,2} \frac{1}{2}\frac{d}{dt} \|\partial^{\alpha} f_k \|_{L^2_{x,v}}^2&\leq \langle L\partial^{\alpha}(f_1,f_2),\partial^{\alpha}(f_1,f_2)\rangle_{L^2_{x,v}} \cr
&\quad+\langle \partial^{\alpha} f_1,\partial^{\alpha} (\Gamma_{11}+ \Gamma_{12}) \rangle_{L^2_{x,v}} +\langle \partial^{\alpha} f_2,\partial^{\alpha}(\Gamma_{22}+\Gamma_{21}) \rangle_{L^2_{x,v}}.
\end{align*}
Then the first term of the R.H.S is controlled by the full coercivity estimate \eqref{full coer}, and the nonlinear terms on the second line are estimated by Lemma \ref{nonlin esti1} and Lemma \ref{nonlin esti}:
\begin{multline*}
\sum_{|\alpha|\leq N}\sum_{k=1,2}\left(\frac{1}{2}\frac{d}{dt} \|\partial^{\alpha} f_k \|_{L^2_{x,v}}^2+\eta \min\left\{(1-\delta),(1-\omega) \right\} \| \partial^{\alpha}f_k \|_{L^2_{x,v}}^2\right) \cr
\leq  C_0\sqrt{\mathcal{E}_{N_1,0}(t)}\sum_{|\alpha|\leq N}\|\partial^{\alpha} (f_1,f_2) \|_{L^2_{x,v}}^2.
\end{multline*}
For $M_0$ satisfying Theorem \ref{theo_loc_ex} and \eqref{full coer}, we define
\begin{align*}
M = \left\{\frac{M_0}{2}, \frac{\eta^2 \min\left\{(1-\delta)^2,(1-\omega)^2 \right\}}{4C_0^2}\right\}, \qquad T= \sup_{t\in\mathbb{R}^+}\{t ~|~ \mathcal{E}_{N_1,0}(t) \leq 2M \} >0.
\end{align*}
We restrict our initial data to satisfy the following energy bound:
\begin{align*}
\mathcal{E}_{N_1,0}(0) \leq M \leq 2M_0.
\end{align*}
Once we define
\begin{align*}
y(t) =\sum_{|\alpha|\leq N}\sum_{k=1,2}\|\partial^{\alpha} f_k \|_{L^2_{x,v}}^2,
\end{align*}
then $y(t)$ satisfies
\begin{align*}
y'(t) + 2\eta \min\left\{(1-\delta),(1-\omega) \right\}y(t) &\leq 2C_0\sqrt{\mathcal{E}_{N_1,0}(t)} y(t)  \cr
&\leq \eta \min\left\{(1-\delta),(1-\omega) \right\}y(t).
\end{align*}
Thus we obtain
\begin{align*}
y(t) \leq e^{- \eta \min\left\{(1-\delta),(1-\omega) \right\}t}y(0) \leq y(0) \leq M  < 2M,
\end{align*}
and which is possible only when $T=\infty$. Note that this also gives 
\begin{align*}
\sum_{|\alpha|\leq N}\|\partial^{\alpha} (f_1(t),f_2(t)) \|_{L^2_{x,v}}^2 \leq e^{-\eta \min\left\{(1-\delta),(1-\omega) \right\}t}\sum_{|\alpha|\leq N}\|\partial^{\alpha} (f_1(0),f_2(0)) \|_{L^2_{x,v}}^2.
\end{align*}
Now we consider the general case of $f$ having momentum derivatives. Taking $\partial^{\alpha}_{\beta}$ on \eqref{pertf1} and \eqref{pertf2} and applying an inner product with $\partial^{\alpha}_{\beta} f_1$ and $\partial^{\alpha}_{\beta} f_2$, respectively, we have 
\begin{align}\label{d1}
\begin{split}
\frac{1}{2}\frac{d}{dt}\|\partial^{\alpha}_{\beta} f_1 \|_{L^2_{x,v}}^2&+(n_{10}+n_{20})\|\partial^{\alpha}_{\beta} f_1 \|_{L^2_{x,v}}^2 = - \sum_{i=1}^3 \langle \partial^{\alpha+\bar{k}_i}_{\beta-k_i}f_1, \partial^{\alpha}_{\beta} f_1 \rangle_{L^2_{x,v}} \cr
	&\quad + (n_{10}+n_{20})
	\langle \partial_{\beta}P_1\partial^{\alpha}f_1,\partial^{\alpha}_{\beta} f_1 \rangle_{L^2_{x,v}}
	+\langle \partial^{\alpha}_{\beta}L_{12}^2(f_1,f_2), \partial^{\alpha}_{\beta} f_1 \rangle_{L^2_{x,v}} \cr
	&\quad +\langle \partial^{\alpha}_{\beta}(\Gamma_{11}(f_1)+\Gamma_{12}(f_1,f_2)),\partial^{\alpha}_{\beta} f_1 \rangle_{L^2_{x,v}},
\end{split}
\end{align}
and
\begin{align}\label{d2}
\begin{split}
\frac{1}{2}\frac{d}{dt}\|\partial^{\alpha}_{\beta} f_2 \|_{L^2_{x,v}}^2&+(n_{10}+n_{20})\|\partial^{\alpha}_{\beta} f_2 \|_{L^2_{x,v}}^2 = - \sum_{i=1}^3 \langle \partial^{\alpha+\bar{k}_i}_{\beta-k_i}f_2, \partial^{\alpha}_{\beta} f_2 \rangle_{L^2_{x,v}} \cr
	&\quad + (n_{10}+n_{20})
	\langle \partial_{\beta}P_2\partial^{\alpha}f_2,\partial^{\alpha}_{\beta} f_2 \rangle_{L^2_{x,v}}
	+\langle \partial^{\alpha}_{\beta}L_{21}^2(f_1,f_2), \partial^{\alpha}_{\beta} f_2 \rangle_{L^2_{x,v}} \cr
	&\quad +\langle \partial^{\alpha}_{\beta}(\Gamma_{22}(f_2)+\Gamma_{21}(f_1,f_2)),\partial^{\alpha}_{\beta} f_2 \rangle_{L^2_{x,v}}.
\end{split}
\end{align}
Combining \eqref{d1} and \eqref{d2}, and applying the H\"{o}lder inequality and Young's inequality, we can obtain 
\begin{multline*}
\sum_{k=1,2}\left(\frac{1}{2}\frac{d}{dt}\|\partial^{\alpha}_{\beta} f_k \|_{L^2_{x,v}}^2 + (n_{10}+n_{20}-2\epsilon)\|\partial^{\alpha}_{\beta} f_k \|_{L^2_{x,v}}^2\right) \cr
\leq \frac{1}{2\epsilon}\sum_{k=1,2}\sum_{i=1}^3\|\partial^{\alpha+\bar{k}_i}_{\beta-k_i}f_k\|_{L^2_{x,v}}^2 +\frac{C}{2\epsilon}\sum_{k=1,2}\| \partial^{\alpha}f_k\|_{L^2_{x,v}}^2  +C \mathcal{E}_{N_1,|\beta|}^{\frac{3}{2}}(t),
\end{multline*}
for some positive constant $\epsilon$ satisfying $(n_{10}+n_{20})/2>\epsilon>0$. We sum this over $|\beta|=m+1$ and multiply both sides with $\epsilon \eta_m$:
\begin{multline*}
\sum_{|\beta|=m+1}\left[\sum_{k=1,2}\left(\frac{\epsilon \eta_m}{2}\frac{d}{dt}\|\partial^{\alpha}_{\beta} f_k \|_{L^2_{x,v}}^2 + \epsilon \eta_m(n_{10}+n_{20}-2\epsilon)\|\partial^{\alpha}_{\beta} f_k \|_{L^2_{x,v}}^2\right)\right] \cr
\leq \sum_{|\beta|=m+1}\left[\frac{\eta_m}{2}\sum_{k=1,2}\sum_{i=1}^3\|\partial^{\alpha+\bar{k}_i}_{\beta-k_i}f_k\|_{L^2_{x,v}}^2 +\frac{C\eta_m}{2}\sum_{k=1,2}\| \partial^{\alpha}f_k\|_{L^2_{x,v}}^2  +C \mathcal{E}_{N_1,|\beta|}^{\frac{3}{2}}(t)\right].
\end{multline*}
Combining the previous cases $|\beta| \leq m$, the R.H.S of the inequality can be bounded by the energy $\mathcal{E}_{N_1,|\beta|}$ with $|\beta|\leq m$ and $\mathcal{E}_{N_1,0}$. Thus, we can conclude from induction that 
\begin{align*}
\sum_{\substack{|\alpha|+|\beta|\leq N \cr |\beta|\leq m+1}}\sum_{k=1,2} \left( C_{m+1}\frac{d}{dt}\| \partial^{\alpha}_{\beta}f_k \|_{L^2_{x,v}}^2+ \eta_{m+1}\| \partial^{\alpha}_{\beta}f_k \|_{L^2_{x,v}}^2\right) \leq C_{m+1}^*\mathcal{E}_{N_1,|\beta|}^{\frac{3}{2}}(t).
\end{align*}
Applying the same continuity argument as to when $\beta=0$, we can construct the global-in-time classical solution. We mention that when $|\beta|=0$, the parameter $\eta_0$ depends on $1-\delta$ and $1-\omega$, and $C_0=1/2$. But when $|\beta|\geq1$, both $C_{m+1}$ and $\eta_{m+1}$ depend on the parameter $\eta_m$. That is why we cannot extract a decay rate depending explicitly on the parameter $\delta$ and $\omega$ when the velocity derivatives are involved. 
For the uniqueness of the solution and $L^2$ stability, we can follow the standard arguments in \cite{Guo whole,Guo VMB,Guo VPB,Yun1}. This completes the proof.\newline\newline

\begin{center}
	{\bf Acknowledgement:}
\end{center}
G.-C. Bae is supported by the National Research Foundation of Korea(NRF) grant funded by the Korea government(MSIT) (No. 2021R1C1C2094843). 
C. Klingenberg is supported in part by the W\"urzburg University performance-orientated fund LOM 2021.
M. Pirner is supported by the Alexander von Humboldt Foundation
S.-B. Yun is supported by Samsung Science and Technology Foundation under Project Number SSTF-BA1801-02.

\end{document}